\DeclareMathOperator{\Aut}{Aut}
\DeclareMathOperator{\GL}{GL}
\DeclareMathOperator{\Ind}{Ind}
\DeclareMathOperator{\davenport}{\mathsf{D}}
\DeclareMathOperator{\field}{\mathbb{F}}
\DeclareMathOperator{\supp}{supp}
\theoremstyle{plain}
\newtheorem{proposition}{Proposition}[section]
\newtheorem{theorem}[proposition]{Theorem} 
\newtheorem{lemma}[proposition]{Lemma}
\newtheorem{corollary}[proposition]{Corollary}
\theoremstyle{definition}
\newtheorem{definition}[proposition]{Definition}
\newtheorem{remark}[proposition]{Remark}
\title{Groups with large Noether bound}
\author{K\'alm\'an Cziszter $^a$ 
\thanks{The paper is based on results from the PhD thesis of the first author written at the Central European University.}
\qquad and \qquad M\'aty\'as Domokos $^b$ 
\thanks{The second author is partially supported by OTKA  NK81203 and K101515.}
}
\date{}
\begin{document}

\maketitle

{\small \begin{center} 
$^a$ Central European University, Department of Mathematics and its Applications, 
N\'ador u. 9, 1051 Budapest, Hungary \\
Email: cziszter\_kalman-sandor@ceu-budapest.edu \\
 $^b$ R\'enyi Institute of Mathematics, Hungarian Academy of Sciences,\\
 Re\'altanoda u. 13-15, 1053 Budapest, Hungary \\
Email: domokos.matyas@renyi.mta.hu
\end{center}
}

\begin{abstract}
The finite groups having an indecomposable polynomial invariant of degree  at least  half the order of the group are classified. 
It turns out that |apart from four sporadic exceptions| these are exactly the groups with a cyclic subgroup of index at most two.
\end{abstract}

\maketitle


\section{Introduction}  
\subsection{Outline of the main results} 

Let $G$ be a finite group and $V$  a $G$-module of finite dimension over a  field $\field$. 
By a classical theorem of E. Noether \cite{noether:1926}  the \emph{algebra of polynomial invariants} on $V$,  
denoted by $\field[V]^G$, is finitely generated. Set 
\begin{align*}
\beta(G,V)&:=\min\{d\in \mathbb{N}\mid \field[V]^G\text{ is generated by elements of degree at most } d\},  \\
\beta(G)&:=\sup\{\beta(G,V)\mid V\mbox{ is a finite dimensional }G\text{-module over } \field \}.
\end{align*}
The famous theorem on the {\it Noether bound} asserts that  
\begin{align}\label{noetherbound} \beta(G)\leq |G| \end{align}
provided that $\mathrm{char}(\field)$ does not divide the order of $G$
(see Noether \cite{noether:1916} in characteristic $0$ and Fleischmann \cite{fleischmann},  Fogarty \cite{fogarty} in positive characteristic). 
Schmid  proved in \cite{schmid} that  over the field of complex numbers $\beta(G)=|G|$ holds only when $G$ is cyclic. 
This was sharpened by Domokos and Heged\H us  in \cite{domokos-hegedus}  by proving  
that $\beta(G)\leq \frac 34|G|$ for all non-cyclic $G$;  
the result was extended to non-modular positive characteristic by Sezer \cite{sezer}. 
The constant $3/4$ is optimal here. 
On the other hand, a straightforward lower bound on $\beta(G)$ can be obtained 
based  on the result of Schmid in \cite{schmid}, that
$\beta(G)\geq\beta(H)$ holds for any subgroups $H$ of $G$. 
In particular, 
$\beta(G)$ is bounded from below by the maximal order of the elements in $G$. Therefore $\beta(G)\geq \frac 12|G|$
whenever $G$ contains a cyclic subgroup of index two,
and obviously there are infinitely many isomorphism classes of such non-cyclic groups. 
The  main result of the present article is that 
|apart from four sporadic exceptions| these are the only groups for which the ratio of the Noether number and the group order is so large:
 
\begin{theorem}\label{thm:main} 
For a finite group $G$ with order not divisible by $\mathrm{char} (\field)$   we have $\beta(G)\geq \frac 12|G|$ 
if and only if $G$ has a cyclic subgroup of index at most two, or 
$G$ is isomorphic to  
$Z_3\times Z_3$, 
$Z_2\times Z_2\times Z_2$, 
the alternating group $A_4$, or 
the binary tetrahedral group $\tilde{A_4}$. 
\end{theorem}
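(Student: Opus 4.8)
\emph{The forward implication.} If $G$ has a cyclic subgroup $H$ with $[G:H]\le 2$, then Schmid's monotonicity $\beta(G)\ge\beta(H)$ together with $\beta(Z_n)=n$ (the Noether bound is attained on cyclic groups) gives $\beta(G)\ge|H|\ge\tfrac12|G|$. For the four sporadic groups it suffices to record $\beta(Z_3\times Z_3)=5$, $\beta(Z_2\times Z_2\times Z_2)=4$, $\beta(A_4)=6$ and $\beta(\tilde{A_4})=12$ --- the first two via the identification $\beta=\D$ with the Davenport constant, the last two by short explicit computations --- each being at least half the group order.

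\emph{The converse.} I would argue by induction on $|G|$: assuming the classification for all smaller groups, rule out a putative $G$ that is not on the list but has $\beta(G)\ge\tfrac12|G|$. The reduction toolkit consists of: the elementary monotonicities $\beta(H)\le\beta(G)$ for subgroups and $\beta(G/N)\le\beta(G)$ for quotients (pull a $G/N$-module back along $G\twoheadrightarrow G/N$); the Domokos--Heged\H us bound $\beta(G)\le\tfrac34|G|$ for non-cyclic $G$; for abelian $G$ the equality $\beta(G)=\D(G)$ together with the lower bound $\D(Z_{d_1}\times\cdots\times Z_{d_k})\ge 1+\sum_i(d_i-1)$, the exact values of $\D$ on $p$-groups and rank-two groups (Olson), and the upper bound $\D(G)\le\tfrac12|G|+1$ in the non-cyclic case; and, decisively, the ``going-up'' inequalities bounding $\beta(G)$ from above via a normal subgroup $N$ and $G/N$, schematically of the shape $\beta(G)\le\beta(G/N)\cdot\beta_{[G:N]}(N)$ in terms of the generalized Noether numbers $\beta_k$, with sharper forms when $G/N$ is cyclic or $N$ is central. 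Since $N$ and $G/N$ are smaller, the induction hypothesis controls their (generalized) Noether numbers, and substituting into the going-up bounds confines $G$ to a short list of structures to be examined directly.

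The case analysis follows the coarse structure of $G$. \emph{Abelian}: the Davenport-constant estimates leave only $Z_3\times Z_3$ and $Z_2\times Z_2\times Z_2$ among the non-cyclic abelian groups with no cyclic subgroup of index two. \emph{Non-solvable}: here $|G|\ge 60$, element orders are much too small for a cyclic subgroup to have index two, and combining the reductions along a chief series with a direct check of the few small non-abelian simple groups (whose Noether numbers lie well below half their orders) yields $\beta(G)<\tfrac12|G|$. \emph{Non-abelian solvable}: the substantive case --- one takes a minimal normal subgroup $N$ (elementary abelian), analyses the conjugation action of $G/C_G(N)$, and uses the going-up inequalities to force $\beta(G)<\tfrac12|G|$ outside a handful of small configurations, which is exactly where $A_4$ and $\tilde{A_4}$ surface; those residual cases are settled by computing $\beta(G,V)$ directly for the few modules $V$ that could be extremal.

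The main obstacle I anticipate is the non-abelian solvable case. The generic bound $\beta(G)\le\beta(G/N)\cdot c$ only improves on $\tfrac12|G|$ when $c<|N|$, which fails precisely when $G/N$ is itself on the list and hence already has $\beta(G/N)$ close to $\tfrac12|G/N|$; handling this requires the going-up inequalities in their sharpest form, a careful account of the generalized Noether numbers $\beta_k(N)$ of the small normal subgroups that can occur (cyclic $p$-groups, $Q_8$, $Z_2\times Z_2$, $Z_3\times Z_3$, \dots), and, for the genuinely borderline groups, an honest invariant-theoretic computation in place of any uniform inequality.
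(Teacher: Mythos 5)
Your forward direction is fine and agrees with the paper: monotonicity $\beta(G)\ge\beta(H)$ handles the index-two case, and the values $\D(Z_3\times Z_3)=5$, $\D(Z_2\times Z_2\times Z_2)=4$, $\beta(A_4)=6$, $\beta(\tilde A_4)=12$ are exactly what the paper uses. The converse, however, has a genuine gap at the very point you flag as the "main obstacle''. The groups that survive all generic reductions are not a handful of small configurations settled by computing a few extremal modules: they include the infinite family of non-abelian Frobenius groups $Z_p\rtimes Z_q$ ($p,q$ odd primes, $q\mid p-1$), together with $Z_p\rtimes Z_4$, $(Z_2\times Z_2)\rtimes Z_9$, $D_{2p}\times D_{2q}$ and extensions of dihedral or Klein groups by $Z_2\times Z_2$; these occur as unavoidable subgroups or subquotients of any group without a cyclic subgroup of index at most two (this is the paper's Theorem~\ref{thm:structure}, whose non-solvable case moreover rests on Thompson's classification of minimal simple groups --- infinite families, not "a direct check of the few small non-abelian simple groups''). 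For $G=Z_p\rtimes Z_q$ itself every proper subgroup and quotient is cyclic, so all reduction inequalities degenerate: Lemma~\ref{lemma:myred} and Corollary~\ref{cor:G:H+1} give only $\D_q(Z_p)=pq$, Schmid's $\beta(G)\le\beta(G/N)\beta(N)$ gives $pq$, and the Domokos--Heged\H us bound gives $\tfrac34 pq$, all above $\tfrac12 pq$, while $\beta(G)\ge p+q-1$ rules out any cheap improvement. One needs the uniform bound $\beta(Z_p\rtimes Z_q)<\tfrac{pq}{2}$ (plus sharp values such as $\beta(Z_7\rtimes Z_3)=9$ and the $Z_5\rtimes Z_4$ estimate), and this is where the paper spends its entire Chapter~2: an extension of G\"obel's algorithm to monomial representations, the analysis of gapless monomials, and additive-combinatorial input (Cauchy--Davenport, Dias da Silva--Hamidoune). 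No idea for this appears in your proposal, and a case-by-case "honest computation'' cannot cover infinitely many groups.

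A second, related gap lies in the logic of your induction. The going-up inequalities require control of the \emph{generalized} Noether numbers of the smaller pieces --- in the correct form they read $\beta_k(G)\le\beta_{\beta_k(G/N)}(N)$ for $N\triangleleft G$ and $\beta_k(G)\le\beta_{k[G:H]}(H)$ for $H\le G$ --- but your inductive hypothesis only classifies when $\beta(K)\ge\tfrac12|K|$, which yields no usable bound on $\beta_k(K)$ for $k>1$. The paper avoids induction on $|G|$ altogether: it first proves the purely group-theoretic structure theorem, and then computes or estimates $\beta_k$ for the finitely many group types on that list ($\D_k(Z_n\times Z_m)=km+n-1$, $\D_k(Z_2^3)=2k+3$, $\beta_k(A_4)=4k+2$, $\beta(D_{2n},V)\le n+1$, $\beta((Z_2\times Z_2)\rtimes Z_9)\le 17$), feeding these into $\beta(G)/|G|\le\beta(K)/|K|$ and Corollary~\ref{cor:G:H+1}. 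If you keep the minimal-counterexample framework you must either strengthen the inductive statement so that it bounds $\beta_k$, or decouple the group theory from the invariant theory as the paper does.
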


This theorem  is a novelty  even for the case $\field=\mathbb{C}$. 
The main technical tool of its proof is a generalization of the Noether number
which allows us to formulate some reduction lemmata in Section~\ref{sec:red}
that can be used to infer  estimates on the Noether number of a group
from the knowledge of the (generalized) Noether number of its subgroups and homomorphic images. 
Theorem~\ref{thm:structure} then isolates a list of some groups such that an arbitrary finite group $G$ must contain one of them 
as a subgroup or a subquotient, 
unless $G$ contains a cyclic subgroup of index at most two. Finally, the proof is made complete in Sections 2--3, 
where we compute or estimate the (generalized) Noether number for the particular groups on this list.

The quest for degree bounds has always been in the focus of invariant theory. A practical motivation  is that good initial degree bounds may potentially decrease the running time of algorithms to compute generators of invariant rings. On the other hand, the exact value of the Noether bound is known only for very few groups. 
To indicate the difficulties we mention the paper of Dixmier \cite{dixmier}, investigating the Noether number for irreducible representations of the symmetric group of degree $5$. It can be seen in the present paper as well that the discussion of some small groups, the estimation of the Noether bound takes relatively large space 
(especially where the exact value is found). 

We finish the introduction by noting that the constant $1/2$ in Theorem~\ref{thm:main} has a remarkable theoretical  status. 
In a parallel paper \cite{CzD:2} we determine the (generalized) Noether number for each non-cyclic group $G$ with a cyclic subgroup of index $2$: it turns out that 
for such a $G$ we have $\beta(G)-\frac 12|G|\in\{1,2\}$. Consequently, for any $c>1/2$, up to isomorphism  there are only finitely many non-cyclic groups 
$G$ with $\beta(G)/|G|>c$, whereas there are infinitely many isomorphism classes of non-cyclic groups $G$ with $\beta(G)/|G|>1/2$. 
In particular, the set $\{\beta(G)/|G| : G\mbox{  finite group}\}\subset \mathbb{Q}$ has no limit point  strictly between $1/2$ and $1$.


\subsection{The Noether number and its generalization}\label{sec:prel}

Throughout this article  $\field$ is a fixed  algebraically closed base field and
 $G$ is a finite group of order not divisible by  $\mathrm{char} (\field)$, unless explicitly stated otherwise. 
 
By a {\it graded module} we mean an $\mathbb{N}$-graded $\field$-vector space $M=\bigoplus_{d=0}^\infty M_d$, which is a graded module over  a commutative 
$\mathbb{N}$-graded $\field$-algebra $R=\bigoplus_{d=0}^\infty R_d$ such that $R_0= \field$ is the base field when $R$ is unital, 
or $R_0 = \{0\}$ otherwise (in the latter case we still assume that the multiplication map is $\field$-bilinear).  
We set 
 $M_{\ge s} := \bigoplus_{d\ge s} M_d$,   
$M_{\le s} := \bigoplus_{d=0}^s M_d$, and $M_{>s}:=\bigoplus_{d>s}M_d$. 
We also use  the notation $M_+ := M_{ \ge 1}$, so if we regard $R$ as a module over itself, 
its maximal homogeneous ideal is  $R_+$. 
If $M$ is  generated as an $R$-module in bounded degree then set  
\[ \beta(M, R) := \min \{s \in\mathbb{N} :  M \mbox{ is generated as an }R\mbox{-module by }M_{\le s}\} \]
 and write $\beta(M,R) = \infty$ otherwise.
By the graded Nakayama Lemma, a module
$M$ is  generated  by its homogeneous elements $\{m_{\lambda}\mid \lambda\in\Lambda\}$
if and only if 
the $\field$-vector space $M/R_+M$ is spanned by the images $\{\overline{m}_{\lambda}\mid \lambda\in\Lambda\}$. 
As a consequence, $\beta(M,R)$ is the top degree of the factor space $M/R_+M$, inheriting the grading from $M$. 
Here by the top degree of an $\mathbb{N}$-graded vector space we mean the supremum of the degrees of  non-zero homogeneous components (for the zero space the top degree is defined to be zero). 
Obviously we have $\beta(M,R)=\beta(M,R_+)$. 

The subalgebra of $R$ generated by $R_{\le s}$ will be denoted by $\field[R_{\le s}]$. 
For subspaces $S,T$ of an $\field$-algebra $L$ we write $ST$ for the subspace spanned by the products $\{st\mid s\in S,t\in T\}$, 
and use the notation $S^k:=S\dots S$ ($k$ factors)  accordingly.

We  set $\beta(R) := \beta(R_+, R)$. It is zero if $R=R_0$ and otherwise it is the 
supremum of the degrees of homogeneous elements in $R_+ \setminus R_+^2$. 
In other words, $\beta(R)$ is the minimal  $n$ such that $R$ is generated as an $\field$-algebra by homogeneous elements of degree at most $n$ 
when $R$ is generated in bounded degree, and  $\beta(R)=\infty$ when $R$ is not generated in bounded degree.

Let us apply the above concepts in the  more particular setting of invariant theory. 
Here we are given a group $G$ and a finite dimensional $\field$-vector space $V$ equipped with a group homomorphism $G \to \GL(V)$;
in this situation we also say that $V$ is  a (left) $G$-module. 
As an affine space, $V$ has a coordinate ring $\field[V]$ which is defined 
in abstract terms as the symmetric tensor algebra of the dual space $V^*$. 
Thus $\field[V]$ is isomorphic to a polynomial ring in $\dim(V)$ variables,
so in particular it is a graded ring and $\field[V]_1 \cong V^*$. 
The left action of $G$ on $V$ induces a right action on $V^*$ given as  
$x^g(v) = x(gv)$ for any $g\in G, v\in V$ and $x\in V^*$. 
This right action of $G$ on $V^*$ extends multiplicatively onto the whole $\field[V]$. 
Our basic object of  study is the \emph{ring of polynomial invariants} defined as 
\[\field[V]^G := \{ f \in \field[V]: f^g = f \quad \forall g \in G \}.\]
$\beta(G,V) := \beta(\field[V]^G)$ is called the \emph{Noether number} of the $G$-module $V$. 

By a classic result of Hilbert in  \cite{hilbert90}  $\beta(G,V)$ is finite 
if $G$ is \emph{linearly reductive}. When $G$ is finite even more can be said. 
The \emph{global degree bound} for a finite group $G$ is defined as
\begin{align*}\beta(G) := \sup_{V} \beta(G,V) \end{align*}
where $V$ runs through all $G$-modules over the field $\field$. 
By Noether's degree bound  \eqref{noetherbound},
 if $|G|$ is not divisible by $\mathrm{char} (\field)$ then $\beta(G)$ is finite. 
The converse of this statement is also true: it was proved in \cite{derksen-kemper-global} for $\mathrm{char}(\field)=0$
and subsequently in \cite{bryant-kemper} for the whole non-modular case that
the finiteness of $\beta(G)$ implies the finiteness of the group $G$, as well. 
As for the modular case, i.e. when $\mathrm{char}(\field)$ divides $|G|$, Richman constructed in \cite{richman} 
a sequence of $G$-modules $V_1,V_2, ...$ such that $\beta(G,V_i) \to \infty$ as $i \to \infty$, 
so in this case  $\beta(G)$ is not finite.

Note that we suppressed $\field$ from the notation $\beta(G)$. 
The dependence of $\beta(G)$ on the field $\field$ was studied by Knop in \cite{knop}. 
He proved that  $\beta(G)$ is the same for every field $\field$ with the same characteristic. 
In particular this implies that $\beta(G)$ is the same for $\field$ and its algebraic closure. 
So our running assumption that $\field$ is algebraically closed causes no loss of generality in the results.

Now let us summarize  the previously known reduction lemmata by means of which 
$\beta(G)$ can be bounded through  induction on the structure of $G$: 

\begin{lemma} \label{lemma:red}
We have $\beta(G)/|G| \le \beta(K)/|K|$ for any subquotient $K$ of $G$.
\end{lemma}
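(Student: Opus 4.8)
The plan is to reduce the statement to the two elementary reduction steps it packages together. Any subquotient can be written as $K\cong H/N$ with $N\trianglelefteq H\le G$, and since $|K|=|H|/|N|$, the asserted inequality $\beta(G)/|G|\le\beta(K)/|K|$ follows by composing $\beta(G)/|G|\le\beta(H)/|H|$ (passage to a subgroup) with $\beta(H)/|H|\le\beta(H/N)/|H/N|$ (passage to a quotient). Taking suprema over the module, it therefore suffices to prove, for every finite group $G$ with $\ch\F\nmid|G|$ and every $G$-module $V$: (a) $\beta(G,V)\le[G:H]\cdot\beta(H)$ for any subgroup $H\le G$; and (b) $\beta(G,V)\le|N|\cdot\beta(G/N)$ for any normal subgroup $N\trianglelefteq G$. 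Dividing by $|G|$ and applying (b) to the group $H$ then gives the two ratio inequalities.

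For (a): since $[G:H]$ is prime to $\ch\F$, the relative transfer $\mathrm{Tr}^G_H\colon\F[V]^H\to\F[V]^G$, $f\mapsto\sum_{gH\in G/H}g\cdot f$, is a well-defined, degree-preserving, $\F[V]^G$-linear map whose restriction to $\F[V]^G$ is multiplication by $[G:H]$; hence it is surjective. Pick homogeneous algebra generators $h_1,\dots,h_s$ of $\F[V]^H$, all of degree at most $\beta(H,V)\le\beta(H)$. Then $\F[V]^G=\mathrm{Tr}^G_H(\F[V]^H)$ is spanned by the elements $\mathrm{Tr}^G_H(h_{i_1}\cdots h_{i_k})$, and such a product of degree exceeding $[G:H]\beta(H)$ necessarily has more than $[G:H]$ factors. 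The heart of the argument is then a Noether-type symmetric function identity: the elementary symmetric functions of the set $\bigl\{\sum_t z_t\,(g\cdot h_{i_t}) : gH\in G/H\bigr\}$, in auxiliary variables $z_t$, are $G$-invariant because $G$ permutes this set, and Newton's identities for $[G:H]$ ``roots'' then express $\mathrm{Tr}^G_H(h_{i_1}\cdots h_{i_k})$, whenever $k>[G:H]$, through products of $G$-invariants of strictly smaller degree. Iterating this reduction, $\F[V]^G$ is generated in degrees at most $[G:H]\beta(H)$.

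For (b): put $\bar G=G/N$ and $R=\F[V]^N$, so that $\F[V]^G=R^{\bar G}$, with $\bar G$ acting on the graded algebra $R$ by graded automorphisms. By the Noether bound \eqref{noetherbound} for $N$, the finite-dimensional $\bar G$-submodule $A:=\bigoplus_{1\le d\le\beta(N,V)}R_d$ generates $R$ as an $\F$-algebra, and $\beta(N,V)\le\beta(N)\le|N|$. Consequently there is a surjection $\pi\colon\F[A]\twoheadrightarrow R$ of $\bar G$-algebras, where $\F[A]=\mathrm{Sym}_\F(A)$ is given its usual grading (with $A$ in degree $1$) together with the weight grading inherited from $R$ through $A$; the map $\pi$ is homogeneous for the weight grading, and weight is at most $\beta(N,V)$ times the usual degree. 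Averaging over $\bar G$, which is possible since $\ch\F\nmid|\bar G|$, shows that $\pi$ restricts to a surjection $\F[A]^{\bar G}\twoheadrightarrow R^{\bar G}=\F[V]^G$. By definition of $\beta(\bar G)$ the algebra $\F[A]^{\bar G}$ is generated, in the usual grading, in degrees at most $\beta(\bar G,A)\le\beta(\bar G)$, hence in weight at most $\beta(\bar G)\cdot\beta(N,V)$; transporting a generating set through $\pi$ shows $\F[V]^G$ is generated in degrees at most $\beta(\bar G)\,\beta(N,V)\le|N|\,\beta(G/N)$.

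I expect the only genuine obstacle to be the Noether-type identity in step (a): verifying that the symmetric-function manipulation indeed produces coefficients lying in $\F[V]^G$ of the correct, smaller degrees, and --- more seriously --- that it goes through in positive (non-modular) characteristic, where one cannot divide by $k!$ as in Noether's original proof. This is precisely the point at which the characteristic-free argument of Fleischmann \cite{fleischmann} and Fogarty \cite{fogarty} must replace Noether's; everything else is routine. Step (b), by contrast, is uniform in the characteristic, relying only on the Noether bound for $N$ and on the Reynolds operator of $G/N$.
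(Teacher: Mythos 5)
Your overall route coincides with the paper's: write $K\cong H/N$, compose the subgroup reduction $\beta(G)\le[G:H]\beta(H)$ with the quotient reduction $\beta(H)\le\beta(H/N)\beta(N)$, divide by the group order and absorb $\beta(N)\le|N|$ via \eqref{noetherbound}. The paper does exactly this, except that it simply cites the two reduction inequalities (Schmid \cite{schmid} in characteristic $0$, extended to the non-modular case in \cite{sezer}, \cite{fleischmann:2}, \cite{knop}), whereas you re-prove them. Your proof of (b) is fine: it is Schmid's argument (present the $\bar G$-algebra $\F[V]^N$ by a symmetric algebra on its low-degree part and pull back generators through the Reynolds operator of $\bar G$), and it works whenever $\ch\F\nmid|G|$.

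The gap is in (a). The Newton-identity argument you sketch requires extracting the multilinear coefficient of $z_1\cdots z_k$ from $k$-th powers, which produces a factor $k!$; so as written it proves $\beta(G,V)\le[G:H]\beta(H)$ only when $\ch\F=0$ or $\ch\F$ exceeds $[G:H]$, while Lemma~\ref{lemma:red} is asserted under the weaker hypothesis $\ch\F\nmid|G|$ (e.g.\ $\ch\F=3$, $|G|=35$, $[G:H]=7$). Your proposed fix --- ``the characteristic-free argument of Fleischmann and Fogarty replaces Noether's, everything else is routine'' --- does not close this: \cite{fleischmann} and \cite{fogarty} prove the absolute bound $\beta(G)\le|G|$, i.e.\ the case $H=\{1\}$, and their method does not transplant verbatim to the relative transfer over a subgroup that need not be normal. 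The non-modular relative statement is true, but it is obtained by a different mechanism: the telescoping identity \eqref{eq:knop} (Benson/Knop style), which for non-normal $H$ must additionally be composed with the transfer $\tau^H$ as in Proposition~\ref{prop:knopeta}(iii) and Corollary~\ref{cor:G:H+1}, following \cite{sezer}, \cite{fleischmann:2}, \cite{knop}. That this step is genuinely not routine is also indicated by Remark~\ref{remark:babynoether}: under the still weaker hypothesis $\ch\F\nmid[G:H]$ the inequality is an open conjecture. So either cite the non-modular extensions for (a), as the paper does, or replace your Newton-identity paragraph by the transferred telescoping identity; with the argument as you wrote it, the lemma is only established in characteristic zero or large characteristic.
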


\begin{proof} 
For any subgroup $H \le G$, resp. for any normal subgroup $N \triangleleft G$ the following reduction lemmata hold: 
\begin{align} 
\beta(G) &\le  [G:H] \beta(H); \\
\beta(G) &\le \beta(G/N) \beta(N).
\end{align}
These were proved for characteristic $0$ by Schmid (see Lemma~3.2 and 3.1 in \cite{schmid})
and subsequently extended to the case when $\mathrm{char}(\field) \nmid |G|$ in \cite{sezer}, \cite{fleischmann:2}, \cite{knop}. 
Our claim follows after dividing by $|G|$ the above inequalities and using that $\beta(N)/|N|\le 1$ by \eqref{noetherbound}.   \end{proof}

We will introduce  here a generalization of the Noether number with the intent of improving and generalizing Schmid's reduction lemmata  above:
For a graded $R$-module $M$ and an integer $k\ge 1$ set 
\[ \beta_k(M, R) : = \beta(M, R_+^k).\]
Note that $\beta_{1}(M,R) = \beta(M,R)$.
The abbreviation $\beta_k(R):= \beta_k(R_+,R)$ will also be used. 
For  a representation $V$ of a finite group $G$  over the field $\field$ we set $\beta_k(G,V):= \beta_k(\field[V]^G)$. 
The trivial bound $\beta_k(G,V) \le k\beta(G,V)$ shows that this quantity is finite.
We also set 
\[\beta_k(G) := \sup\{\beta_k(G,V)\mid V\text{ is a finite dimensional }G\text{-module over }\field\} \] 
suppressing $\field$ from the notation as in the case of $\beta(G)$. 
We shall refer to these numbers as the {\it generalized Noether numbers} of the group $G$.

\subsection{Reduction lemmata}\label{sec:red}

Our starting point is the following alternative characterization of the generalized Noether number: 

\begin{proposition}\label{prop:altnoetnum} 
$\beta_k(G)$ is the minimal positive integer $d$ having the  property that for any 
finitely generated commutative graded $\field$-algebra $L$ (with $L_0=\field$) on which $G$ acts via 
graded $\field$-algebra automorphisms we have 
\[L^G\cap L_+^{d+1}\subseteq 
(L^G_+)^{k+1}.\]
\end{proposition}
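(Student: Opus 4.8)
The plan is to prove the two directions separately, establishing first that $\beta_k(G)$ has the stated containment property, and then that no smaller integer does.

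\paragraph{The property holds for $d = \beta_k(G)$.}
Let $L$ be a finitely generated commutative graded $\F$-algebra with $L_0 = \F$ carrying a $G$-action by graded algebra automorphisms. I want to show $L^G \cap L_+^{d+1} \subseteq (L^G_+)^{k+1}$ when $d = \beta_k(G)$. The natural move is to realize $L$ as a quotient of a polynomial $G$-algebra: choose a finite-dimensional $G$-stable graded subspace $W \subseteq L_+$ that generates $L$ as an $\F$-algebra (possible since $G$ is finite and $L$ is finitely generated, after enlarging any generating set to a $G$-orbit span), and let $V$ be a $G$-module with $V^* $ mapping $G$-equivariantly onto $W$; then $\F[V]$ surjects onto $L$ via a graded $G$-algebra homomorphism $\pi$. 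Taking invariants is exact here because $|G|$ is invertible in $\F$ (Reynolds operator), so $\pi$ restricts to a surjection $\F[V]^G \to L^G$, and moreover $\pi(\F[V]^G_+) = L^G_+$ and $\pi((\F[V]_+)^k) = L_+^k$. Now suppose $f \in L^G \cap L_+^{d+1}$. The subtlety is that I need a preimage of $f$ lying in $\F[V]^G \cap (\F[V]_+)^{d+1}$, not merely in $\F[V]^G$; but since $\pi$ is graded and $f$ is a sum of homogeneous pieces of degree $\ge d+1$, any homogeneous preimage of each piece automatically has degree $\ge d+1$, hence lies in $(\F[V]_+)^{d+1}$ — here one uses that in the \emph{polynomial} ring $\F[V]$ the degree-$e$ part is contained in $(\F[V]_+)^e$ for $e \ge 1$ (this is the point where it matters that we pulled back to a polynomial ring). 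Applying the Reynolds operator to this preimage keeps it in degree $\ge d+1$ and lands it in $\F[V]^G$. By definition of $\beta_k(G) = d$ applied to the module $V$, we have $\F[V]^G$ generated as a module over $(\F[V]^G_+)^k$ by elements of degree $\le d$, so the degree-$\ge d+1$ invariant lies in $(\F[V]^G_+)^k \cdot \F[V]^G_+ = (\F[V]^G_+)^{k+1}$. Pushing forward by $\pi$ gives $f \in (L^G_+)^{k+1}$.

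\paragraph{No smaller $d$ works.}
Suppose $d < \beta_k(G)$. Then there is a $G$-module $V$ with $\beta_k(G,V) > d$, i.e. $\F[V]^G$ is \emph{not} generated over $(\F[V]^G_+)^k$ by elements of degree $\le d$. By the graded Nakayama lemma applied to the module $R_+ = \F[V]^G_+$ over the non-unital ring $(\F[V]^G_+)^k$, this means the graded vector space $\F[V]^G_+ / (\F[V]^G_+)^{k+1}$ has a nonzero component in some degree $e \ge d+1$; pick a homogeneous invariant $f$ of degree $e \ge d+1$ with $f \notin (\F[V]^G_+)^{k+1}$. Taking $L = \F[V]$ (a perfectly good instance of the algebras quantified over), we have $f \in L^G \cap L_+^{d+1}$ but $f \notin (L^G_+)^{k+1}$, so the property fails for this $d$. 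Hence $\beta_k(G)$ is the minimal such integer.

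\paragraph{Main obstacle.}
I expect the only real friction to be in the first direction: making precise that a finitely generated graded $G$-algebra is a graded $G$-equivariant quotient of some $\F[V]$, and — more delicately — that one can lift a homogeneous invariant of degree $\ge d+1$ in $L$ to an invariant of degree $\ge d+1$ in $\F[V]$ lying in $(\F[V]_+)^{d+1}$. The degree bookkeeping is where the asymmetry between a general algebra $L$ (where $L_+^{d+1}$ can strictly contain the high-degree part, or vice versa) and the polynomial ring $\F[V]$ (where degree $\ge e$ and membership in $(\F[V]_+)^e$ coincide up to the obvious inclusion) is exploited; it is worth stating this explicitly rather than leaving it implicit. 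Everything else — exactness of invariants via the Reynolds operator, surjectivity of $\pi$ on invariants and on the relevant powers of augmentation ideals, and the Nakayama reformulation — is routine given the machinery already set up in Section~\ref{sec:prel}.
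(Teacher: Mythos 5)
Your overall strategy coincides with the paper's (surject a polynomial $G$-algebra $\F[V]$ onto $L$, lift invariants via the Reynolds operator, apply $\beta_k(G,V)\le\beta_k(G)$, push forward), and your second direction, taking $L=\F[V]$ for a $V$ with $\beta_k(G,V)>d$, is correct. The gap sits exactly at the step you yourself flag as delicate. With your construction ($V^*$ surjecting $G$-equivariantly onto a $G$-stable generating subspace $W\subseteq L_+$), the map $\pi\colon\F[V]\to L$ is \emph{not} graded unless $W\subseteq L_1$: a variable has degree $1$ in the standard grading of $\F[V]$ but maps to a generator of $L$ of arbitrary degree. Nor can you repair this by regrading $\F[V]$ so that the variables carry the degrees of the generators, because then the two other facts you invoke fail: the inclusion $\F[V]_e\subseteq(\F[V]_+)^e$ is false in a non-standard grading (a degree-$e$ variable is not a product of $e$ elements of $\F[V]_+$), and $\beta_k(G,V)$ in the paper's sense refers to the standard grading, so the appeal ``by definition of $\beta_k(G)$ applied to $V$'' is no longer legitimate. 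As written, your lifting argument therefore proves the containment only when $L$ is generated in degree $1$, whereas the whole point of the proposition (and of its use in Lemma~\ref{lemma:myred}, where $L=\F[V]^N$) is the general case. Note also that no argument using only the information $\deg f\ge d+1$ can work: for general $L$ the statement with $L_{\ge d+1}$ in place of $L_+^{d+1}$ is simply false (an invariant algebra generator of $L$ of degree $>d$ lies in $L^G\cap L_{\ge d+1}$ but not in $(L^G_+)^{k+1}$), so membership of $f$ in the ideal power $L_+^{d+1}$ must actually be exploited.

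The repair is short and is what the paper does: lift a \emph{factorization} rather than lifting by degree. Since $W$ generates $L$ and $L_0=\F$, one has $\pi(\F[V]_+)=L_+$ with the standard grading, hence $\pi\bigl((\F[V]_+)^{d+1}\bigr)=L_+^{d+1}$. So write $f\in L^G\cap L_+^{d+1}$ as a sum of products of $d+1$ elements of $L_+$, lift each factor to $\F[V]_+$, and obtain a preimage of $f$ in $(\F[V]_+)^{d+1}=\F[V]_{\ge d+1}$; averaging over $G$ stays inside this $G$-stable graded subspace (the action is by graded automorphisms) and still maps to $f$ because $f$ is invariant and $\pi$ is equivariant. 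From there your argument goes through verbatim: $\F[V]^G_{\ge d+1}\subseteq(\F[V]^G_+)^{k+1}$ since $\beta_k(G,V)\le\beta_k(G)=d$, and $\pi\bigl((\F[V]^G_+)^{k+1}\bigr)=(L^G_+)^{k+1}$ because $\pi$ maps $\F[V]^G_+$ onto $L^G_+$, again by Reynolds averaging.
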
  

\begin{proof} Let $L$ be a finitely generated commutative graded $\field$-algebra $L$  with $L_0=\field$ on which $G$ acts via graded $\field$-algebra automorphisms.  
Take a finite dimensional $G$-submodule $W\subset L_+$ generating $L$ as an $\field$-algebra, and set $V:=W^*$. Then the $\field$-algebra surjection 
$\pi:\field[V]\to L$ extending the canonical isomorphism $\field[V]_1=W^{**}\cong W\subset L$ 
is $G$-equivariant and maps $\field[V]_+$ onto $L_+$. Moreover, $\pi$ restricts to a surjection 
$\field[V]_+^G\to L_+^G$ by the assumption $\mathrm{char}(\field)\nmid |G|$. 
So we have  
\[L^G\cap L_+^{\beta_k(G)+1}=\pi(\field[V]_{\ge \beta_k(G)+1}^G)
\subseteq \pi((\field[V]^G_+)^{k+1})=(L^G_+)^{k+1}.\] 
For the reverse inequality let $L:=\field[V]$, where $V$ is a finite dimensional $G$-module with 
$\beta_k(G,V)=\beta_k(G)$. 
\end{proof} 

\begin{lemma} \label{lemma:myred}
Let $N$ be a normal subgroup of $G$. Then for any $G$-module $V$ we have
\[ \beta_k(G,V) \le \beta_{\beta_k(G/N)}(N,V) \]
Consequently the inequality $\beta_k(G) \le \beta_{\beta_k(G/N)}(N)$ holds, as well.
\end{lemma}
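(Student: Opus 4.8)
The plan is to work with the ring $R = \F[V]$ and its invariant subrings for both $N$ and $G$, exploiting the fact that $R^G = (R^N)^G$ — that is, $G/N$ acts on the graded algebra $L := R^N$ via graded $\F$-algebra automorphisms, and $L^{G/N} = R^G$. First I would recall from Proposition~\ref{prop:altnoetnum} what $\beta_k(G/N)$ controls: applied to the algebra $L = R^N$, it tells us that
\[ L^{G/N} \cap L_+^{\beta_k(G/N)+1} \subseteq (L_+^{G/N})^{k+1}. \]
Rewriting this in terms of $R$, and writing $m := \beta_k(G/N)$, this says $R^G \cap (R_+^N)^{m+1} \subseteq (R_+^G)^{k+1}$. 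So the entire problem reduces to showing that a homogeneous $G$-invariant $f$ of degree large enough lies in $(R_+^N)^{m+1}$, i.e.\ is a sum of products of $m+1$ positive-degree $N$-invariants.

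The second step is exactly where $\beta_m(N,V) = \beta_m(N)$-type information enters: by definition $\beta_m(N,V)$ is $\beta(R_+^N, (R_+^N)^m)$, so every homogeneous element of $R_+^N$ of degree $> \beta_m(N,V)$ lies in $(R_+^N)^m \cdot R_+^N = (R_+^N)^{m+1}$. Hence if $f \in R^G_+$ is homogeneous of degree $> \beta_m(N,V)$, then a fortiori $f \in R_+^N$ (since $N \le G$) of that degree, so $f \in (R_+^N)^{m+1}$, and combining with the containment from the previous paragraph gives $f \in (R_+^G)^{k+1}$ whenever $\deg f > \max\{\beta_m(N,V), \text{something}\}$ — but actually since $m+1 \ge 2 > \beta_k$ only in trivial cases, I should be careful: the bound I want is simply that $\deg f > \beta_m(N,V)$ forces $f \in (R_+^G)^{k+1}$, which after unwinding the definition of $\beta_k(G,V) = \beta(R_+^G, (R_+^G)^k) = \beta(\F[V]^G)$ in the $\beta_k$ sense yields $\beta_k(G,V) \le \beta_m(N,V) = \beta_{\beta_k(G/N)}(N,V)$. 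Taking the supremum over all $G$-modules $V$ (noting every $N$-module arises by restriction, or more precisely that the right-hand side is bounded by $\beta_{\beta_k(G/N)}(N)$) gives the consequential inequality.

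The main obstacle I anticipate is the bookkeeping around which ``$L_+$'' is meant where: the grading on $L = R^N$ is inherited from $R$, so $L_+^j = R^N \cap (R_+^N R_+^N \cdots)$ need not obviously equal $(R_+^N)^j$ as computed inside $R$ — but in fact $L_+ = R_+^N$ and products of homogeneous elements of $R^N$ computed in $L$ agree with those computed in $R$, so $L_+^j = (R_+^N)^j$ and this is harmless; one just has to say it cleanly. The one genuinely substantive input, beyond definitions, is the surjectivity $\pi(\F[W]_+^N) \to L_+^N$ type reasoning already packaged inside Proposition~\ref{prop:altnoetnum}, together with the observation that applying that proposition to the \emph{particular} algebra $L = \F[V]^N$ (which is indeed a finitely generated commutative graded $\F$-algebra with $L_0 = \F$ carrying a $G/N$-action) is legitimate. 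Once those two points are granted, the argument is a two-line chain of inclusions, and the final ``consequently'' statement follows by taking suprema over $V$, using $\beta_m(N,V) \le \beta_m(N)$ and that $\beta_k(G) = \sup_V \beta_k(G,V)$.
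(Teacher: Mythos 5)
Your argument is correct and is essentially the paper's own proof: both apply Proposition~\ref{prop:altnoetnum} to the $G/N$-action on $L=\F[V]^N$ and use the definition of $s=\beta_{\beta_k(G/N)}(N,V)$ to place $L_{\ge s+1}$ inside $L_+^{\beta_k(G/N)+1}$, so that $G$-invariants of degree $>s$ land in $(\F[V]^G_+)^{k+1}$. The brief hesitation about needing a maximum is unnecessary (as you conclude yourself), and the ``consequently'' step is exactly the paper's: bound $\beta_{\beta_k(G/N)}(N,V)$ by $\beta_{\beta_k(G/N)}(N)$ and take the supremum over $V$.
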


\begin{proof} 
We shall apply Proposition~\ref{prop:altnoetnum}  for the algebra $L:=\field[V]^N$;  
denote $R:=\field[V]^G$. The subalgebra $L$ of $\field[V]$ is $G$-stable, and the action of $G$ on $L$  factors through $G/N$, and $R=L^{G/N}$. 
Setting $s:=\beta_{\beta_k(G/N)}(N,V)$, 
we have 
\begin{align*}
R_{\ge s+1}= R\cap  L_{\ge s+1} &\subseteq 
L^{G/N}\cap L_+^{\beta_k(G/N)+1}
 \subseteq (L_+^{G/N})^{k+1}=(R_+)^{k+1}. \qedhere \end{align*}
\end{proof}

A weaker version of Lemma~\ref{lemma:myred} remains true for any subgroup $H \le G$ which is not necessarily normal.
To show this we will make use of the following relativized version of the Reynolds operator (see e.g.  \cite{neusel} p. 33):
Let $H \le G$ be a subgroup 
and $g_1, ..., g_n$ a system of right coset representatives of $H$.
For a $G$-module $V$ the map $\tau_H^G: \field[V]^H \to\field[V]^{G}$ called the \emph{relative transfer map}
 is defined by the sum 
\[ \tau_H^G(u) = \sum_{i=1}^n u^{g_i} .\] 
In the special case when $H$ is the trivial subgroup $\{1_G\}$, we recover the \emph{transfer map} 
$\tau^G:\field[V]\to\field[V]^G$. 
If $\mathrm{char}(\field)$ does not divide $[G:H]$ then $\tau_H^G$ 
is a graded $\field[V]^G$-module epimorphism from $\field[V]^H$ onto $\field[V]^G$.  We shall use this fact most frequently in the following form: 

\begin{proposition} \label{trans}
If $\mathrm{char}(\field)$ does not divide $[G:H]$, then we have 
$\beta_k(G,V)\le \beta_k(\field[V]^H_+, \field[V]^G)$. 
\end{proposition}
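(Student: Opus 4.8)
The plan is to exploit the fact that the relative transfer map $\tau:=\tau_H^G$ is, under the hypothesis $\ch(\F)\nmid[G:H]$, a graded $\F[V]^G$-module epimorphism from $\F[V]^H$ onto $\F[V]^G$, exactly as stated just before the proposition. Write $R:=\F[V]^G$ and $M:=\F[V]^H_+$, viewed as a graded $R$-module via restriction of the $R$-action on $\F[V]^H$; note $M$ is finitely generated as an $R$-module because $\F[V]^H$ is a finitely generated $\F[V]^G$-module (integral extension, Noetherianity), so $\beta_k(M,R)=\beta(M,R_+^k)$ is finite and the right-hand side of the claimed inequality makes sense.

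First I would record what $\beta_k(M,R)=\beta(M,R_+^k)$ means concretely: by the graded Nakayama Lemma, $M$ is generated as an $R_+^k$-module by homogeneous elements of degree at most $s:=\beta_k(M,R)$; equivalently $M_{\ge s+1}\subseteq R_+^k\, M$, where of course $M\subseteq\F[V]^H_+$. Next I would apply $\tau$. Since $\tau$ is a surjective homomorphism of graded $R$-modules and $\tau(\F[V]^H_+)=R_+$ (here one uses $\ch(\F)\nmid[G:H]$ together with the fact that $\tau$ sends the positively-graded part onto the positively-graded part, because $\tau(1)=[G:H]\cdot 1\ne 0$ forces $\tau$ to be degree-preserving and surjective onto all of $R$), we get
\[
R_+ \;=\; \tau(M) \;=\; \tau\bigl(M_{\le s}\bigr) + \tau\bigl(M_{\ge s+1}\bigr) \;\subseteq\; \tau(M_{\le s}) + \tau(R_+^k M).
\]
Using $R$-linearity of $\tau$, the second summand satisfies $\tau(R_+^k M)=R_+^k\,\tau(M)=R_+^k\,R_+=R_+^{k+1}$. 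Therefore $R_+ = \tau(M_{\le s}) + R_+^{k+1}$, and $\tau(M_{\le s})$ consists of homogeneous elements (sums thereof) of degree at most $s$. This shows that $R_+$, modulo $R_+^{k+1}$, is spanned by classes of degree $\le s$; that is, $R_+ = \F[\text{stuff of degree}\le s]\cdot\text{(generators as }R_+^k\text{-module in degree}\le s)$ — more precisely, a homogeneous $R$-module generating set of $R_+$ over $R_+^k$ can be taken in degrees $\le s$, which is precisely the statement $\beta_k(R)=\beta(R_+,R_+^k)\le s$. Since this holds for the given $V$, $\beta_k(G,V)=\beta_k(\F[V]^G)\le\beta_k(\F[V]^H_+,\F[V]^G)$, as claimed.

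The only point requiring a little care — and the one I would write out explicitly — is the passage $\tau(R_+^k M)=R_+^{k+1}$ and, before it, the identification $\tau(\F[V]^H_+)=R_+$; both rest on $\tau$ being an $\F[V]^G$-module map that is surjective in each positive degree, which in turn uses $\ch(\F)\nmid[G:H]$ so that the averaging argument behind the relative Reynolds/transfer operator works. There is no genuine obstacle here: once $\tau$ is known to be a graded surjective $R$-module homomorphism with $\tau(\F[V]^H_+)=R_+$, the inequality is a formal consequence of the Nakayama-type characterization of $\beta_k$. I would simply cite the displayed fact preceding the proposition for the properties of $\tau$ and present the three-line computation above.
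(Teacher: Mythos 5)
Your proof is correct and takes essentially the same route as the paper, which states no separate argument but derives the proposition immediately from the fact that $\tau_H^G$ is a graded $\F[V]^G$-module epimorphism of $\F[V]^H$ onto $\F[V]^G$ — precisely the mechanism you use. You merely make explicit the short Nakayama-type computation (applying $\tau$ to $M_{\ge s+1}\subseteq R_+^kM$ and using $\tau(R_+^kM)\subseteq R_+^{k+1}$) that the paper leaves implicit.
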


\begin{proposition}\label{prop:knopeta} 
Let $J$ be a non-unitary commutative $\field$-algebra on which a finite group $G$ acts via $\field$-algebra automorphisms
and let $H \le G$ be a subgroup for which one of the following conditions holds: 
\begin{itemize}
\item[(i)] $\mathrm{char}(\field)>[G:H]$ or $\mathrm{char}(\field)=0$;  
\item[(ii)] $H$ is normal in $G$ and $\mathrm{char}(\field)$ does not divide $[G:H]$; 
\item[(iii)] $\mathrm{char}(\field)$ does not divide $|G|$. 
\end{itemize}
Then we have
\[  (J^H)^{[G:H]} \subseteq  J^H  J^G + J^G  \] 
\end{proposition}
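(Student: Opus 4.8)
The plan is to reduce all three cases to a single core computation involving the relative transfer $\tau = \tau_H^G$ and the polarization/Reynolds-type identity for symmetric functions of a set of conjugates. The starting observation is that for $u \in J^H$, the element $\tau(u) = \sum_{i=1}^n u^{g_i}$ (where $n = [G:H]$ and $g_1,\dots,g_n$ are right coset representatives) lies in $J^G$ whenever this sum makes sense $G$-invariantly; more generally, every elementary symmetric polynomial $e_j(u^{g_1},\dots,u^{g_n})$ is $G$-invariant, since $G$ permutes the multiset $\{u^{g_1},\dots,u^{g_n}\}$. By the Newton identities, the power sums $p_j = \sum_i (u^{g_i})^j$ for $j=1,\dots,n$ generate the same algebra as the $e_j$ over $\F$ provided $\ch(\F) = 0$ or $\ch(\F) > n$; in particular $e_n(u^{g_1},\dots,u^{g_n}) = \prod_i u^{g_i}$ is then a polynomial with $\F$-coefficients in $p_1,\dots,p_n$, each $p_j \in J^G$. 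Since $J$ is non-unitary we must be slightly careful: every monomial in the $p_j$ that actually occurs has positive degree in the $p_j$, hence lies in $J^G$ — but we want the sharper conclusion $(J^H)^n \subseteq J^H J^G + J^G$, so I would track which terms of the expansion of $u_1 \cdots u_n$ (for possibly distinct $u_1,\dots,u_n \in J^H$, via full polarization of $e_n$) can be guaranteed to carry an explicit $J^H$-factor or to lie in $J^G$ outright.

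Concretely, for case (i): given $u_1,\dots,u_n \in J^H$, consider the fully polarized elementary symmetric expression, i.e. the coefficient of $t_1 \cdots t_n$ in $\prod_{i=1}^n\bigl(1 + \sum_{j} t_j u_i^{g_j}\bigr)$ — this is a $G$-invariant element, call it $\sigma$, and one of its terms is $u_1 u_2 \cdots u_n$ (taking the $g_1$-conjugate factor from $u_1$, the $g_2$-conjugate from $u_2$, etc., after choosing $g_1 = 1_G$). All the other terms in $\sigma$ are products of $n$ conjugates $u_{\pi(1)}^{g_1}\cdots u_{\pi(n)}^{g_n}$ for permutations $\pi \ne \mathrm{id}$; I would argue these, summed appropriately, can be rewritten via the Newton–Girard relations as a polynomial in the power sums $\sum_i u_{i}^{g_\ell} \cdot(\ldots)$, which are honest $G$-invariants, modulo terms still containing an undistributed factor from $J^H$. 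The upshot is $u_1\cdots u_n \in J^G + J^H J^G$. The hardest part will be making this last bookkeeping clean: ensuring that after subtracting off the manifestly $G$-invariant power-sum polynomial, the remainder genuinely lies in $J^H J^G$ rather than merely in the $n$-th power $(J^H)^n$ again — this needs an induction on $n$, peeling off one power sum at a time so that each successive remainder is a product of strictly fewer conjugates times a $G$-invariant.

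For case (iii), $\ch(\F) \nmid |G|$, the Reynolds operator and the relative transfer are available with their averaging normalization, so here I would instead invoke Proposition~\ref{trans} (or the surjectivity of $\tau_H^G$) directly: since $\tau_H^G : \F[V]^H_+ \to \F[V]^G_+$ is a surjective $\F[V]^G$-module map, one gets $J^G = \tau(J^H)$ and then a telescoping argument on $\prod_i u^{g_i} - n^{-1}\tau(\text{something})$ gives the inclusion; in fact case (iii) should follow from case (i) applied after a base change replacing $\F$ by a field of large characteristic, using Knop's theorem (cited in the excerpt) that $\beta_k(G)$ — and the relevant containments — depend only on $\ch(\F)$, or more simply by a direct averaging argument that is cleaner when $|G|$ is invertible. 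Case (ii), $H \triangleleft G$ with $\ch(\F) \nmid [G:H]$: now $G/H$ acts on $J^H$, the conjugates $u^{g_i}$ depend only on cosets, so $\{u^{g_i}\}$ is literally a $G/H$-orbit in $J^H$, and $[G:H]$ is invertible; I would run the same symmetric-function argument as in (i) but using that the averaging over the $[G:H]$ conjugates (division by $[G:H]$, which is now legal) lets us replace the Newton-identity gymnastics by the single identity $\prod_i u^{g_i} = e_n = $ (polynomial in $p_1,\dots,p_n$ over $\mathbb{Z}[1/n!]$) — valid because $n = [G:H]$ and its factorials are invertible — so the same "one term is $u\cdots u$, the rest assemble into $J^H J^G + J^G$" conclusion holds. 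I expect cases (ii) and (iii) to be short corollaries once the combinatorial heart of case (i) is in place.
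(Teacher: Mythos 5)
Your case (i) has a concrete error at its key step: in the coefficient of $t_1\cdots t_n$ in $\prod_{i=1}^n\bigl(1+\sum_j t_j u_i^{g_j}\bigr)$, the terms are $\prod_i u_i^{g_{\pi(i)}}$ for bijections $\pi$, so each $u_i$ is conjugated by a \emph{distinct} coset representative; only one of them can be $g_1=1_G$, hence $u_1u_2\cdots u_n$ is \emph{not} one of the terms (the identity permutation gives $u_1u_2^{g_2}\cdots u_n^{g_n}$). The whole difficulty of the proposition is precisely to relate the plain product $u_1\cdots u_n$ to such $G$-symmetrized expressions, and the part you defer as ``bookkeeping'' (showing the remainder lies in $J^HJ^G$ rather than back in $(J^H)^n$, given that the conjugates $u_i^{g_j}$ with $g_j\notin H$ are not in $J^H$) is exactly the unproved heart of the matter. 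The paper's route in (i) avoids this: for a single $f\in J^H$, $f$ is a root of the monic polynomial $\prod_{g\in\mathcal S}(t-f^g)\in J[t]$ whose coefficients are $G$-invariant, giving $f^{[G:H]}\in J^HJ^G+J^G$ directly (every lower power of $f$ is again in $J^H$); the multilinear statement then follows from the polarization identity expressing $r!\,f_1\cdots f_r$ as an alternating sum of $r$th powers of subset sums --- and this is the only place the stronger hypothesis $\ch(\F)>[G:H]$ or $\ch(\F)=0$ is needed.

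Cases (ii) and (iii) as you sketch them do not work either. In (ii) you invoke the Newton--Girard expression of $e_n$ in power sums ``over $\mathbb{Z}[1/n!]$, valid because $n=[G:H]$ and its factorials are invertible'' --- but the hypothesis is only $\ch(\F)\nmid[G:H]$, which does not make $[G:H]!$ invertible (e.g.\ $\ch(\F)=2$, $[G:H]=3$); moreover this would at best control $\prod_i u^{g_i}$, not the plain product of $[G:H]$ arbitrary elements of $J^H$, and passing to the latter again needs polarization, i.e.\ the stronger hypothesis of (i). In (iii), the proposed reduction to (i) by ``base change to large characteristic using Knop's theorem'' is not legitimate: Knop's result compares the numerical invariant $\beta(G)$ across fields of the \emph{same} characteristic; it gives no way to transport the containment $(J^H)^{[G:H]}\subseteq J^HJ^G+J^G$ for a fixed $\F$-algebra $J$ to another characteristic, and the surjectivity of $\tau^G_H$ alone does not yield the inclusion. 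What the paper actually uses for (ii) and (iii) is the identity
\[ 0=\sum_{y\in\mathcal S}\prod_{x\in\mathcal S}\bigl(a_x-a_x^{x^{-1}y}\bigr)=\sum_{U\subseteq\mathcal S}(-1)^{|U|}\delta_U,\qquad \delta_U=\prod_{x\notin U}a_x\sum_{y\in\mathcal S}\Bigl(\prod_{x\in U}a_x^{x^{-1}}\Bigr)^{y}, \]
where normality of $H$ (case (ii)) makes each $\delta_U$ with $U\neq\emptyset$ land in $J^HJ^G+J^G$ via the relative transfer, while the empty set contributes $[G:H]\prod_x a_x$; in case (iii) one applies $\tau^H$ to the same identity and divides by $|G|$. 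Only $[G:H]$ (resp.\ $|G|$) needs to be invertible, and arbitrary tuples $a_x\in J^H$ are handled at once, with no polarization. Some such identity, or the characteristic-polynomial-plus-polarization argument, is what your proposal is missing.
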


\begin{proof} (i) 
Let $f\in J^H$ be arbitrary and $\mathcal{S}$ a system of right $H$-coset  representatives in $G$.
Then $f$ is a root of the monic polynomial $\prod_{g\in \mathcal{S}}(t- f^g)\in J[t]$.
Obviously all coefficients of this polynomial are $G$-invariant. 
Consequently, $f^{[G:H]}\in J^H J^G+J^G$ holds for all $f\in J^H$. 
Take arbitrary  $f_1,\ldots,f_r\in J^H$ where $r=[G:H]$. Then the product 
$r!f_1\cdots f_r$ can be written as an alternating sum of $r$th powers of sums of subsets of $
\{f_1,\ldots,f_r\}$ (see e.g. Lemma 1.5.1 in \cite{benson}), hence $f_1\cdots f_r\in J^H J^G+ J^G$. 

(ii) (This  is a variant of a result of Knop,  Theorem 2.1 in \cite{knop}; the idea appears in Benson's simplification of Fogarty's argument from \cite{fogarty}, see Lemma 3.8.1 in \cite{derksen-kemper}). 
Let $\mathcal{S}$ be a system of $H$-coset  representatives in $G$. 
For each $x \in \mathcal{S}$ choose an arbitrary element $a_x \in J^H$. 
It is easily checked that 
\begin{align}\label{eq:knop} 0 \; =\; \sum_{y \in \mathcal{S}} \prod_{x \in \mathcal{S}} (a_x- a_x^{x^{-1}y}) 
&=  \sum_{U \subseteq \mathcal{S}} (-1)^{|U|} \delta_U \qquad \text{where}\\ \notag
 \delta_U &:= \prod_{x \not\in U}a_x  \sum_{y \in \mathcal{S}}(\prod_{x \in U} a_x^{x^{-1}})^y. 
 \end{align}
Note that $a_x^g\in J^H$ for all $x\in \mathcal{S}$ and $g\in G$ by normality of $H$ in $G$.  Therefore 
 $\delta_U=   \prod_{x \not\in U} a_x \; \tau^G_H\left(\prod_{x \in U} a_x^{x^{-1}}\right)$.
Thus $\delta_{\mathcal{S}}\in J^G$ and $\delta_U \in J^H J^G $ for every $U \subsetneq \mathcal{S}$, except for $U = \emptyset$, 
when we get the term $[G:H] \prod_{x \in \mathcal{S} } a_x$. 
Given that $[G:H] \in \field^{\times}$  and  the elements $a_x$ were arbitrary the claim  follows.

(iii) Let $\mathcal{S}$ be a system of left $H$-coset  representatives in $G$. 
Apply the transfer map $\tau^H:J\to J^H$ to the equality \eqref{eq:knop}, and observe that 
\begin{align}
\tau^H(\delta_U)=\prod_{x \not\in U}a_x \sum_{h\in H} \sum_{y \in \mathcal{S}}(\prod_{x \in U} a_x^{x^{-1}})^{yh}= \prod_{x \not\in U}a_x \tau^G(\prod_{x \in U} a_x^{x^{-1}}).
\end{align}
This shows that $\tau^H(\delta_U)\in J^HJ^G+J^G$ for all non-empty subsets $U\subseteq \mathcal{S}$, and 
$\tau^H(\delta_{\emptyset})=|G|\prod_{x\in\mathcal{S}}a_x$, implying the claim as in (ii). 
\end{proof}

\begin{remark}
Finiteness of $G$ can be replaced by  finiteness of $[G:H]$  in (i) and (ii) above. 
\end{remark}

\begin{corollary}\label{cor:G:H+1} 
Keeping the assumptions of Proposition~\ref{prop:knopeta} on $G$, $H$ and $\mathrm{char}(\field)$, 
let $V$ be a $G$-module, $I := \field[V]^H$, $R:= \field[V]^G$. Then for any  
graded $I$-module $M$ we have
\begin{align} \beta_k(M,R)\le \beta_{k[G:H]}(M,I). \end{align}
In particular we have the inequality 
\begin{align}  
 \beta_k(G,V)  & \le \beta_{k[G:H]} (H,V).\end{align}
\end{corollary}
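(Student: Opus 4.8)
The plan is to deduce the module statement $\beta_k(M,R)\le\beta_{k[G:H]}(M,I)$ directly from Proposition~\ref{prop:knopeta}, applied with $J:=I_+=\F[V]^H_+$ (a non-unitary commutative $\F$-algebra on which $G$ acts, with $J^H=I_+$ and $J^G=R_+$ since $\ch(\F)\nmid|G|$ guarantees the relative Reynolds operator is surjective onto invariants). Proposition~\ref{prop:knopeta} then gives $(I_+)^{[G:H]}\subseteq I_+R_+ + R_+$, but every term on the right already lies in $R_+^2$ plus lower products; more precisely, iterating this containment, for any exponent $m$ one gets $(I_+)^{m[G:H]}\subseteq \sum_{j\ge 1}(I_+)^{m[G:H]-j}R_+^{\,\lceil j/?\rceil}\cdots$ — I would instead package it cleanly as follows. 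Set $n:=[G:H]$. From $(I_+)^n\subseteq I_+R_+ + R_+\subseteq (I_+)R_+$ (absorbing the constant term since $R_+\subseteq I_+R_+$ once we multiply by anything, or simply noting $R_+\subseteq I_+$ so $R_+ = R_+\cdot 1\subseteq$ the relevant ideal after one more factor), one shows by a short induction that $(I_+)^{n\ell}\cdot R_+^0\subseteq R_+^\ell\cdot I_+$ for all $\ell\ge 0$, i.e. high powers of the augmentation ideal of $I$ are absorbed into high powers of $R_+$ times $I$.

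Now I would run the standard Nakayama-type degree-chasing argument. Let $M$ be a finitely generated graded $I$-module and put $s:=\beta_{kn}(M,I)=\beta(M,I_+^{kn})$, so $M = I_+^{kn}\,M_{\le s} + \big(\text{generators in degree}\le s\big)$ — more precisely $M_{>s}\subseteq I_+^{kn}M$. We want $M_{>s}\subseteq R_+^k M$, which is exactly $\beta(M,R_+^k)=\beta_k(M,R)\le s$. Take a homogeneous $m\in M_{>s}$; then $m\in I_+^{kn}M$, and by the displayed absorption (with $\ell=k$, using that $I_+^{kn}=(I_+)^{n\cdot k}\subseteq R_+^k I_+\subseteq R_+^k$ after noting $I_+\cdot(\text{anything in }M)\subseteq M$ so $I_+^{kn}M\subseteq R_+^k I_+ M\subseteq R_+^k M$) we conclude $m\in R_+^k M$. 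This yields the first inequality. The in-particular statement follows by taking $M:=\F[V]^G_+=R_+$ as a module over $I=\F[V]^H$ — it is finitely generated over $I$ by Proposition~\ref{trans}/the transfer being an $I$-module map, or simply because $R_+$ is a finitely generated $R$-module hence $I$-module — and observing $\beta_k(G,V)=\beta_k(R_+,R)$ while $\beta_{kn}(R_+,I)\le\beta_{kn}(I_+,I)=\beta_{kn}(H,V)$ since $R_+\subseteq I_+$ forces the module $R_+$ to be generated in degrees no larger than $I_+$ is (any $I_+^{kn}$-module generating set of $I_+$, intersected appropriately, works — or use that $R_+ = \tau_H^G(I_+)$ and the transfer is degree-preserving and $I$-linear, so it sends $I_+^{kn}$-module generators of $I_+$ onto those of $R_+$).

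The main obstacle I anticipate is the bookkeeping around the non-unitary setup and the constant terms: Proposition~\ref{prop:knopeta} is stated for $J$ non-unitary precisely so that $J^G$ plays the role of $R_+$ rather than $R$, and one must be careful that the term $J^G$ on the right-hand side of $(J^H)^{[G:H]}\subseteq J^HJ^G+J^G$ really is absorbed into a product of positive-degree invariants rather than contributing a spurious degree-zero piece — this is where $\ch(\F)\nmid|G|$ (via surjectivity of the transfer, giving $J^G=R_+$ genuinely, not just $R_+\cap$ image) is used, and it is the one spot where hypothesis (iii) is essential even though (i) or (ii) suffice for the algebraic identity itself. The degree-counting induction showing $I_+^{kn}M\subseteq R_+^kM$ from the single-step containment is routine but must be done with the grading in mind so that no degrees are lost. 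Everything else is a direct application of the graded Nakayama lemma as recalled in Section~\ref{sec:prel}.
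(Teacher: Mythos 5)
Your first inequality follows essentially the paper's route, but the way you invoke Proposition~\ref{prop:knopeta} would fail: you take $J:=I_+=\F[V]^H_+$, yet $G$ does not act on $\F[V]^H$ unless $H$ is normal (for $f\in\F[V]^H$ one only has $f^g\in\F[V]^{g^{-1}Hg}$), and cases (i), (iii) of that proposition are precisely the ones allowing non-normal $H$. The correct input is $J:=\F[V]_+$, for which $J^H=I_+$ and $J^G=R_+$; this gives $(I_+)^{[G:H]}\subseteq I_+R_++R_+$, and iterating, $I_+^{k[G:H]}\subseteq I_+R_+^k+R_+^k$, which after multiplying by $M$ yields $I_+^{k[G:H]}M\subseteq R_+^kM$ because $I_+M\subseteq M$. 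Note also that your ``packaged'' inclusion $(I_+)^{n\ell}\subseteq R_+^{\ell}I_+$ is false at the level of ideals: the additive term $R_+^{\ell}$ cannot be dropped (already for $H=\{1\}$, $G=Z_n$ acting faithfully on a one-dimensional $V$, the monomial $x^n$ lies in $(I_+)^n$ but $R_+I_+$ vanishes in degree $n$). It only becomes harmless after multiplication by $M$, which is exactly how the paper states and uses the containment; with these repairs your Nakayama-type degree chase for $\beta_k(M,R)\le\beta_{k[G:H]}(M,I)$ is fine.

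The deduction of the ``in particular'' statement is where the real gap lies. You cannot take $M:=R_+$ as an $I$-module: $I\cdot R_+\not\subseteq R_+$ (an $H$-invariant times a $G$-invariant is only $H$-invariant), so the first inequality does not apply to it. The auxiliary justifications offered do not hold either: the relative transfer $\tau_H^G$ is $R$-linear but not $I$-linear, and the principle ``$R_+\subseteq I_+$ forces $R_+$ to be generated in degrees no larger than $I_+$'' is not valid (submodules, and ideals in particular, can require generators of strictly higher degree than the ambient object). The paper's argument avoids all of this: by Proposition~\ref{trans} (surjectivity and $R$-linearity of the transfer, available since $\ch(\F)\nmid[G:H]$ under each of hypotheses (i)--(iii), not only (iii) as you suggest) one has $\beta_k(G,V)=\beta_k(R)\le\beta_k(I_+,R)$, and then the first inequality is applied with $M:=I_+$, which genuinely is a finitely generated graded $I$-module, giving $\beta_k(I_+,R)\le\beta_{k[G:H]}(I_+,I)=\beta_{k[G:H]}(H,V)$. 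You need this detour through $I_+$ and the transfer; as written, your second step is not correct.
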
 

\begin{proof} 
Apply Proposition~\ref{prop:knopeta} for $J:=\field[V]_+$. Then $J^H=I_+$ and $J^G=R_+$, so $I_+^{[G:H]}\subseteq I_+R_++R_+$. 
Consequently $(I_+^{[G:H]})^k\subseteq I_+R_+^k+R_+^k$, hence 
$MI_+^{k[G:H]}\subseteq MR_+^k$. Thus the top degree of the factor space $M/MR_+^k$ is bounded by the top degree of $M/MI_+^{k[G:H]}$, 
implying the first inequality.
For the second note that $\beta_k(G,V) = \beta_k(R) \le \beta_k(I_+,R)$  by Proposition~\ref{trans} 
and $\beta_k(I_+,R)\le  \beta_{k[G:H]}(I_+,I) = \beta_{k[G:H]}(H, V)$.
\end{proof}

\begin{remark}\label{remark:babynoether}
 It is conjectured that $\beta(G,V)\leq [G:H]\beta(H,V)$ holds in fact  whenever $\mathrm{char}(\field)\nmid [G:H]$.
This  open question is mentioned under the name  ``baby Noether gap"  in Remark 3.8.5 (b) in \cite{derksen-kemper} or on page 1222 in \cite{kemper-separating}.  
\end{remark}

Finally we present some rather technical results which will be used later in Chapter~\ref{ch:semidir} to obtain upper bounds on $\beta(G)$:

\begin{lemma} \label{lemma:induk1}
Let $M$ be a graded module over a graded ring $I$,  
and $S \subseteq I$ a graded subalgebra. 
Then for any integers $k > r \ge 1$ we have
\begin{align*} \label{eq:induk}
\beta_{k}(M,I) \le   \max \{ \beta(M,S) + \beta_{k-r-1}(S) , \beta_{r}(M,I) + \beta_{k-r}(S) \}
\end{align*}
\end{lemma}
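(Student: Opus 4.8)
The plan is to unwind the definitions: since $\beta_k(M,I)=\beta(M,I_+^k)$, the assertion is equivalent to $M_{>N}\subseteq I_+^k M$, where $N$ denotes the claimed right-hand side. I would in fact prove the sharper containment $M_{>N}\subseteq S_+^{k-r}\,I_+^r M$, which suffices because $S\subseteq I$ gives $S_+^{k-r}\subseteq I_+^{k-r}$ and hence $S_+^{k-r}I_+^r M\subseteq I_+^{k-r}I_+^r M=I_+^k M$. If $\beta(M,S)=\infty$ then $N=\infty$ and there is nothing to prove, so I may assume $M$ is finitely generated over $S$.

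First I would record two elementary facts. (a) From $\beta_j(S)=\beta(S_+,S_+^j)$: a homogeneous element of $S_+$ of degree exceeding $\beta_j(S)$ lies in $S_+^{j+1}$. (b) From the definition of $\beta(M,S)$: any homogeneous $m\in M_d$ with $d>\beta(M,S)$ can be written $m=\sum_j s_j w_j$ with $s_j\in S_+$ homogeneous and $w_j\in M$ homogeneous of degree $\le\beta(M,S)$; and $M_{>\beta_r(M,I)}\subseteq I_+^r M$.

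Next I would establish the core ``peeling'' statement by downward induction on $\deg w$: \emph{if $w\in M$ is homogeneous with $\deg w\le\beta_r(M,I)$ and $T\in S_+^{k-r+1}$ is homogeneous with $\deg(Tw)>N$, then $Tw\in S_+^{k-r}I_+^r M$.} To carry out the step, write $T$ as a finite sum $\sum_l\sigma_l t_l$ with $\sigma_l\in S_+$ and $t_l\in S_+^{k-r}$ (possible since $S_+^{k-r+1}=S_+\cdot S_+^{k-r}$) and examine each $\sigma_l w$: if $\deg(\sigma_l w)>\beta_r(M,I)$ then $\sigma_l w\in I_+^r M$, so $t_l(\sigma_l w)\in S_+^{k-r}I_+^r M$; otherwise $\deg t_l=\deg(Tw)-\deg(\sigma_l w)>N-\beta_r(M,I)\ge\beta_{k-r}(S)$, so $t_l\in S_+^{k-r+1}$ by (a), and because $\deg(\sigma_l w)\ge\deg w+1$ the inductive hypothesis applies to $(\sigma_l w,t_l)$. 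The base case $\deg w=\beta_r(M,I)$ is immediate since then every $\sigma_l w$ falls into the first alternative. Finally I would combine: given homogeneous $m\in M_d$ with $d>N$, write $m=\sum_j s_j w_j$ by (b); then $\deg s_j\ge d-\beta(M,S)>\beta_{k-r-1}(S)$, so $s_j\in S_+^{k-r}$ by (a). If $\deg w_j>\beta_r(M,I)$ then $w_j\in I_+^r M$ and $s_j w_j\in S_+^{k-r}I_+^r M$; if $\deg w_j\le\beta_r(M,I)$ then $\deg s_j\ge d-\beta_r(M,I)>\beta_{k-r}(S)$, so $s_j\in S_+^{k-r+1}$ and the peeling statement gives $s_j w_j\in S_+^{k-r}I_+^r M$. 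Summing over $j$ yields $m\in S_+^{k-r}I_+^r M$, as required.

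The step I expect to be the main obstacle is the peeling statement, because a one-shot extraction of $S_+$-factors from $m$ only yields about $k-r$ of the $k$ required factors of $I_+$, leaving a deficit of roughly $r-1$. The resolution I have in mind is exactly the inductive mechanism above: whenever a term is still ``bad'' after extracting one $S_+$-factor, the degree of its module part strictly increases while remaining $\le\beta_r(M,I)$, which in turn forces the accompanying ring factor to lie in $S_+^{k-r+1}$, so after at most $\beta_r(M,I)$ repetitions the module part clears the threshold and lands in $I_+^r M$. Everything else is routine bookkeeping — checking that the two quantities defining $N$ are precisely what is consumed along each branch.
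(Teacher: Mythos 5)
Your proposal is correct and follows essentially the same route as the paper: write $m=\sum_j s_jw_j$ with $\deg w_j\le\beta(M,S)$, note $s_j\in S_+^{k-r}$ because $\deg s_j>\beta_{k-r-1}(S)$, and then use the threshold $\beta_{k-r}(S)$ to force the module factor above degree $\beta_r(M,I)$, landing in $S_+^{k-r}I_+^rM\subseteq I_+^kM$. Your ``peeling'' induction is just an explicit rendering of the paper's one-line inclusion $MS_+^{k-r}\subseteq M(S_+^{k-r})_{\le\beta_{k-r}(S)}$, so the two arguments coincide in substance.
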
 

\begin{proof} 
Assume that $d\in \mathbb{N}$ is greater than the right hand side of this inequality. 
Then 
\begin{equation}\label{eq:rev1}M_d \subseteq  M_{\le \beta(M,S)}S_{> \beta_{k-r-1}(S)} \subseteq MS_+^{k-r}. \end{equation} 
Note that for any positive integer $j$ the top degree of $S_+^j/S_+^{j+1}$ is trivially bounded by the top degree of the larger space 
$S_+/S_+^{j+1}$. In other words $\beta(S_+^j,S)\le\beta(S_+,S_+^j)=\beta_j(S)$, thus 
$MS_+^j\subseteq M (S_+^j)_{\le \beta_j(S)}$. It follows that 
\begin{equation}\label{eq:rev2} MS_+^{k-r} = M(S_+^{k-r})_{\le \beta_{k-r}(S)}.\end{equation} 
Combining \eqref{eq:rev1}, \eqref{eq:rev2} with the assumption $d>\beta_r(M,I)+\beta_{k-r}(S)$ we get 
\[M_d  \subseteq M_{>\beta_{r}(M,I)}  S_+^{k-r} \subseteq M I_+^{r} S_+^{k-r} \subseteq MI_+^{k}.\]  
This proves that $d>\beta_k(M,I)$.
\end{proof} 

\begin{lemma}\label{induk} 
For a $G$-module $V$ and subgroup $H \le G$ as in Proposition~\ref{prop:knopeta}
set $L := \field[V]$, $M:=L_+/L_+^GL_+$. For any $1\le r< [G:H]$ and   $s\ge 1$  we have 
\begin{align*} 
\beta(L_+,L^G) \le ([G:H] -r)s + \max \{\beta_r(M, L^H) ,  \beta(M, \field[L^H_{\le s}]) -s \}
\end{align*} 
\end{lemma}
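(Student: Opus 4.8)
\textbf{Proof plan for Lemma~\ref{induk}.}

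The plan is to derive this as a special case of the abstract Lemma~\ref{lemma:induk1}, applied with a well-chosen ring, module, and subalgebra, together with Corollary~\ref{cor:G:H+1} to convert from invariants of $G$ to invariants of $H$. First I would set $I := L^H = \F[V]^H$ and $S := \F[L^H_{\le s}]$, the subalgebra of $I$ generated in degree at most $s$; the module will be $M = L_+/L_+^G L_+$, viewed as a graded $I$-module (it is an $L^G$-module, hence an $L^H$-module since $L^G \subseteq L^H$, and it is finitely generated because $L_+$ is). The first task is to relate $\beta(L_+, L^G)$ to $\beta(M, I)$-type quantities. By the graded Nakayama Lemma, $\beta(L_+,L^G) = \beta(M, L^G)$, and since $L^G_+ M = 0$ in $M$ by construction, $M$ is a module over $L^G/L^G_+ = \F$; but more usefully, for the reduction via $H$ I would invoke Corollary~\ref{cor:G:H+1} (or rather its underlying Proposition~\ref{prop:knopeta}): with $J := L_+$ we get $\beta_k(M, L^G) \le \beta_{k[G:H]}(M, L^H)$, and in particular $\beta(L_+, L^G) = \beta_1(M,L^G) \le \beta_{[G:H]}(M, L^H) = \beta_{[G:H]}(M,I)$.

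Next I would feed $\beta_{[G:H]}(M,I)$ into Lemma~\ref{lemma:induk1} with $k := [G:H]$ and the given $r$ (with $1 \le r < [G:H]$, so $k > r \ge 1$ as required), and with $S = \F[L^H_{\le s}]$ as above. That lemma yields
\[ \beta_{[G:H]}(M,I) \le \max\{\beta(M,S) + \beta_{[G:H]-r-1}(S),\ \beta_r(M,I) + \beta_{[G:H]-r}(S)\}. \]
Now the point of choosing $S$ to be generated in degrees $\le s$ is that $S_+^j$ contains everything of degree $> js$ that lies in $S$; more precisely $\beta_j(S) = \beta(S_+, S_+^j) \le js$ — indeed $S_+^{j+1}$ contains $S_{>js}$ since any monomial in the degree-$\le s$ generators of total degree $> js$ uses at least $j+1$ generators. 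Hence $\beta_{[G:H]-r-1}(S) \le ([G:H]-r-1)s$ and $\beta_{[G:H]-r}(S) \le ([G:H]-r)s$. Substituting, the first term in the max becomes at most $\beta(M,S) + ([G:H]-r-1)s = \bigl(\beta(M,S) - s\bigr) + ([G:H]-r)s$, and the second becomes at most $\beta_r(M,I) + ([G:H]-r)s$. Pulling out the common summand $([G:H]-r)s$ gives exactly
\[ \beta(L_+,L^G) \le ([G:H]-r)s + \max\{\beta_r(M,L^H),\ \beta(M,\F[L^H_{\le s}]) - s\}, \]
which is the claimed bound.

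The main obstacle I anticipate is not any single hard estimate but rather making sure the two reductions compose correctly: that $M$ genuinely is a \emph{finitely generated graded $I$-module} so that Lemma~\ref{lemma:induk1} and Corollary~\ref{cor:G:H+1} apply (this needs $\F[V]$ Noetherian and $I$ module-finite over a Noetherian subring, or simply that $M$ is finite-dimensional over $\F$ — which it is, being a quotient of $L_+$ by an ideal containing $L_+^G L_+$, once one notes $L$ is finite over $L^G$, hence $L_+/L_+^G L_+$ is finite-dimensional), and that the parameter bookkeeping between $\beta_k(M,L^G)$, $\beta_{k[G:H]}(M,L^H)$, and the output of Lemma~\ref{lemma:induk1} lines up. A secondary subtlety is the elementary inequality $\beta_j(\F[L^H_{\le s}]) \le js$: it should be stated carefully since $S_+^j$ is spanned by products of $j$ elements of $S_+$, each a sum of monomials of degree between $1$ and $s$ in the chosen generators, so any homogeneous element of $S$ of degree exceeding $js$ lies in $S_+^{j+1}$ — this is the one place a reader might want a line of justification.
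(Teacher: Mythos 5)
Your proposal is correct and follows essentially the same route as the paper: reduce $\beta(L_+,L^G)=\beta(M,L^G)$ to $\beta_{[G:H]}(M,L^H)$ via Corollary~\ref{cor:G:H+1}, then apply Lemma~\ref{lemma:induk1} with $k=[G:H]$, $I=L^H$, $S=\F[L^H_{\le s}]$ and the observation $\beta_j(S)\le js$. The only difference is that you spell out the bookkeeping (the arithmetic pulling out $([G:H]-r)s$ and the justification of $\beta_j(S)\le js$) that the paper leaves implicit, which is fine.
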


\begin{proof} We have $ \beta(L_+,L^G)=\beta(M,L^G)\le\beta_{[G:H]}(M,L^H)$ by Corollary~\ref{cor:G:H+1}. 
Applying Lemma~\ref{lemma:induk1} with $k:=[G:H]$,  $I:=L^H$,  $S := \field[I_{\le s}]$ and noting that 
$\beta_k(S) \le ks$ we obtain the above inequality. 
\end{proof} 

\begin{remark}
(i) A version of  Lemma~\ref{induk} limited to the abelian case appears  in \cite{geroldinger-halterkoch} as Lemma 6.1.3.

(ii)
The use of  Lemma~\ref{lemma:myred} and Corollary~\ref{cor:G:H+1} on the generalized Noether number stems from the fact  that 
 for $k>1$ the number $\beta_k(G,V)$ in general is strictly smaller than $k\beta(G,V)$, 
 as it can be seen in Section~\ref{sec:davenport} already for abelian groups.  See also \cite{CzD:3} for more information in this respect. 
\end{remark}


\subsection{The Davenport constant}\label{sec:davenport} 

 A \emph{character} of  an abelian group $A$ is a group homomorphism from $A$ to the multiplicative group $\field^{\times }$ of the base field. 
The set of characters of $A$ is denoted by $\hat{A}$; it is naturally an abelian group, and in fact there is a (non-canonical) isomorphism $\hat{A} \cong A$. 
Let $V$ be a representation of $A$ over the  base field $\field$. 
Since $\field$ is algebraically closed and $\mathrm{char}(\field)$ does not divide 
$|A|$ by our conventions, 
$V$ decomposes as a direct sum of $1$-dimensional representations.
This means that $V^*$ has an $A$-eigenbasis $\{x_{1},...,x_{n}\}$. 
The character 
$\theta_i \in \hat{A}$ given by $x_i^a = \theta_i(a) x_i$ is called the \emph{weight} of $x_i$.
We shall always tacitly choose such an $A$-eigenbasis as the variables in the polynomial algebra $\field[V]=\field[x_{1}, ..., x_{n}]$. 
Let $M(V)$ denote the set of monomials in $\field[V]$; this is a monoid with respect to ordinary multiplication and unit element $1$. 
On the other hand we denote by $\mathcal{M}(\hat{A})$ the free commutative monoid generated by the elements of $\hat{A}$.
Due to our choice of variables in $\field[V]$ we can define a monoid homomorphism $\Phi: M(V) \to \mathcal{M}(\hat{A})$
by sending each variable $x_i$ to its weight $\theta_i$.  
We shall call $\Phi(m)$ the \emph{weight sequence} of the monomial $m \in M(V)$. 
We prefer to write $\hat{A}$ additively, hence 
for any character $\chi\in \hat A$ we denote by $-\chi$ the character $a\mapsto \chi(a)^{-1}$, 
$a\in A$.

An element $S \in \mathcal{M}(\hat{A})$ can  be interpreted as a \emph{sequence} 
$S:=(s_1,\ldots,s_n)$ of elements of $\hat{A}$ where repetition of elements is allowed and their order is disregarded. 
The \emph{length} of $S$ is $|S|:=n$. 
By a \emph{subsequence} of $S$ we mean $S_J := (s_j\mid j\in J)$ for some subset $J\subseteq \{1,\ldots,n\}$. %
Given a sequence $R$ over an abelian group $A$ we write 
$R=R_1R_2$ if $R$ is the concatenation of its subsequences $R_1$, $R_2$,
and we call the expression $R_1R_2$ a \emph{factorization} of $R$. 
Given an element $a\in A$ and a positive integer $r$, write $(a^r)$ for the sequence in which $a$ occurs with multiplicity $r$. 
For an automorphism $b$ of $A$ and a sequence $S=(s_1,\dots,s_n)$ we write $S^b$ for the sequence 
$(s_1^b,\dots,s_n^b)$, and we say that the sequences $S$ and $T$ are \emph{similar} if $T=S^b$ 
for some $b \in \Aut(A)$.

Let $\sigma: \mathcal{M}(\hat{A}) \to \hat{A}$ be the monoid homomorphism 
which assigns to each sequence over $A$ the sum of its elements. 
The value $\sigma(\Phi(m)) \in \hat A$ is called the \emph{weight of the monomial} $m \in M(V)$ and it will be abbreviated by $\theta(m)$. 
In particular, $\theta(x_i)=\theta_i$ with the notation in the first paragraph of this section. 
The kernel of $\sigma$ is called the \emph{block monoid} of $\hat{A}$, denoted by $\mathcal{B}(\hat{A})$,
and its elements are called zero-sum sequences. Our interest in zero-sum sequences 
and the related results in additive number theory  stems from the observation that
the invariant ring $\field[V]^A$ is spanned as a vector space by all those monomials
for which $\Phi(m)$ is a zero-sum sequence over $\hat A$. Moreover, as an algebra, $\field[V]^A$ is minimally 
generated by those monomials $m$ for which $\Phi(m)$ 
does not contain any proper zero-sum subsequences. 
These are  called \emph{irreducible} zero-sum sequences, 
and they form the Hilbert basis of the monoid $\mathcal{B}(\hat A)$. 
A sequence is \emph{zero-sum free} if it has no non-empty zero-sum subsequence.

The \emph{Davenport constant} $\davenport(A)$ of  $A$ is defined as the length of 
the longest irreducible zero-sum sequence over $A$.  
It is an extensively studied quantity,  see for example \cite{geroldinger-gao}. 
As it is seen from our discussion:
\begin{equation}\label{eq:davenportbeta}
\davenport(A)=\beta(A).\end{equation}

The {\it generalized Davenport constant} $\davenport_k(A)$ is introduced in \cite{halter-koch} as the length of the longest zero-sum sequence 
that cannot be factored  into more than $k$ non-empty zero-sum sequences. 
It is evident from the above discussion that $\davenport_k(A) = \beta_k(A)$.  Moreover
Lemma~\ref{lemma:myred}  applied to abelian groups yields for any subgroup $B\leq A$ that:
 \begin{align} 
 \davenport_k(A) &\leq \davenport_{\davenport_k(A/B)}(B); \\
 \davenport_k(A) &\le \davenport_{\davenport_k(B)} (A/B). 
 \end{align} 
The second inequality follows from the first because
$A$ has a subgroup $C \cong A/B$ for which $A /C \cong B$, 
hence the role of $A/B$ and $B$ can be reversed in this formula. 
This inequality appears as Proposition 2.6 in \cite{delorme}.  

For the cyclic group $Z_n$ we have $\davenport_k (Z_n) = kn$. 
We close this section with two more results on $\davenport_k$ which will be used later on.

\begin{proposition}[Halter-Koch, \cite{halter-koch} Proposition 5] \label{prop:halter-koch} 
For any $n \mid m $ we have
\[ \davenport_k (Z_n \times Z_m) = km + n -1.  \]
\end{proposition}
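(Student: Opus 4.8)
The plan is to prove separately the two inequalities $\D_k(Z_n\times Z_m)\ge km+n-1$ and $\D_k(Z_n\times Z_m)\le km+n-1$. Put $A:=Z_n\times Z_m$; two facts about $A$ will be used as input, both relying on $n\mid m$: the classical value $\D(A)=n+m-1$ of Olson, and the fact that every sequence over $A$ of length at least $2m+n$ contains a non-empty zero-sum subsequence of length at most $\exp(A)=m$ (this is the bound $\eta(A)\le 2m+n$; in fact $\eta(A)=2n+m-2$).

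For the lower bound it suffices to display one zero-sum sequence of length $km+n-1$ over $A$ that cannot be factored into more than $k$ non-empty zero-sum sequences. Let $e_1,e_2\in A$ be elements of orders $n$ and $m$, and set
\[ U:=(e_1^{n-1})\,(e_2^{km-1})\,(e_1+e_2), \]
a sequence of length $km+n-1$ with $\sigma(U)=ne_1+kme_2=0$. In any factorization of $U$ into non-empty zero-sum blocks exactly one block $R$ contains the single entry $e_1+e_2$; writing $\sigma(R)=(\alpha+1)e_1+(\beta+1)e_2$ with $0\le\alpha\le n-1$ and $0\le\beta\le km-1$, the requirement $\sigma(R)=0$ forces $\alpha=n-1$ and $\beta=jm-1$ for some $1\le j\le k$. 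Hence $R$ absorbs every copy of $e_1$ together with $jm-1$ copies of $e_2$, the remaining $(k-j)m$ copies of $e_2$ make up the other blocks, each of length a positive multiple of $m$, and so the number of blocks is at most $1+(k-j)\le k$. This part is routine.

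For the upper bound I would show that every zero-sum sequence $S$ over $A$ with $|S|\ge km+n$ factors into at least $k+1$ non-empty zero-sum sequences. For $k=1$ this is nothing but $\D_1(A)=\D(A)=n+m-1$. For $k\ge 2$ the mechanism is iterated extraction of short zero-sum subsequences: as long as the current (zero-sum) sequence has length $\ge 2m+n$, split off a non-empty zero-sum subsequence of length $\le m$. Starting from $S$ and having removed $j$ such pieces we are left with a zero-sum sequence of length $\ge(k-j)m+n$, which is $\ge 2m+n$ as soon as $j\le k-2$; so we may extract $k-1$ short zero-sum subsequences $T_1,\dots,T_{k-1}$. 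The leftover $S'$ is then a zero-sum sequence with $|S'|\ge m+n>\D(A)$, hence not irreducible, hence a product of at least two non-empty zero-sum sequences. Concatenating the factorizations of $T_1,\dots,T_{k-1},S'$ writes $S$ as a product of at least $(k-1)+2=k+1$ non-empty zero-sum sequences, as required. (This is the same as proving the recursion $\D_k(A)\le\D_{k-1}(A)+\exp(A)$ and iterating from $k=1$.)

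The hard point is the short-zero-sum extraction, i.e. the bound $\eta(Z_n\times Z_m)\le 2m+n$ for $n\mid m$. Unlike $\D(A)=n+m-1$, this is a statement about zero-sum subsequences of \emph{bounded length}, and it is not reachable from the reduction lemmata of Section~\ref{sec:red}: passing to $\F[V]^B$ for a cyclic subgroup $B$, or equivalently factoring the image of $S$ in $A/B\cong Z_n$ and lifting the factorization, loses a multiplicative factor of $|B|$ or $|A/B|$ — the lifted blocks merely have sums lying in $B$, not vanishing, so a sequence can fail to split even though its image splits into many pieces. A self-contained proof of this step calls for a Kneser-type estimate on the sets of bounded-size subset sums; alternatively one cites the known value of $\eta(Z_n\times Z_m)$. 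The remaining checks — the length bookkeeping in the extraction, and the degenerate case $n=1$ where $A$ is cyclic and the formula is just $\D_k(Z_m)=km$ — are immediate.
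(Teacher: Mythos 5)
The paper does not actually prove this statement: it is quoted verbatim from Halter-Koch \cite{halter-koch}, so there is no internal argument to compare yours against; judged on its own, your proof is correct in outline and is essentially the standard one from the zero-sum literature. Your lower bound via the sequence $(e_1^{n-1})(e_2^{km-1})(e_1+e_2)$ is complete as written. The upper bound is also structurally sound: extract $k-1$ zero-sum blocks of length at most $m=\exp(A)$ and finish with $\D(Z_n\times Z_m)=n+m-1$, i.e.\ the recursion $\D_k(A)\le \D_{k-1}(A)+m$. But note that this rests on two external theorems, both nontrivial: Olson's determination of the Davenport constant for rank-two groups, and the bounded-length extraction property $\eta(Z_n\times Z_m)\le 2m+n$ (the exact value $2n+m-2$ reduces via standard multiplicativity to the prime case $\eta(Z_p\times Z_p)=3p-2$). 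You are right that this second ingredient cannot be obtained from the reduction lemmata of Section~\ref{sec:red} --- passing to a subgroup or quotient controls the number of zero-sum blocks but not their lengths --- and it is precisely the substantive content hiding behind the equality; the honest way to handle it is an explicit citation (e.g.\ to \cite{geroldinger-halterkoch}) rather than the unexecuted ``Kneser-type estimate.'' With those two inputs cited, your argument is complete and is the same mechanism underlying Halter-Koch's original proof, so within the economy of the present paper (which treats the whole proposition as a citation anyway) your version is a legitimate, if not self-contained, reconstruction.
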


\begin{proposition}[Delorme-Ordaz-Quiroz, \cite{delorme} Lemma 3.7] \label{prop:Z2Z2Z2} 
\[ \davenport_k(Z_2 \times Z_2 \times Z_2) =
\begin{cases}
4 & \text{ if } k =1; \\
2k + 3 & \text{ if } k>1. 
\end{cases}
\]
\end{proposition}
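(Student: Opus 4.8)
The goal is to determine $\D_k(Z_2^3)$.  I will first handle the case $k=1$, which is classical: for the elementary abelian $2$-group of rank $3$, the Davenport constant equals $1+3\cdot(2-1)=4$, and an extremal irreducible zero-sum sequence is given by $(e_1,e_2,e_3,e_1+e_2+e_3)$, where $e_1,e_2,e_3$ is a basis.  For the general case $k>1$, the plan is to prove the two inequalities $\D_k(Z_2^3)\le 2k+3$ and $\D_k(Z_2^3)\ge 2k+3$ separately.

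For the lower bound, I would exhibit an explicit zero-sum sequence of length $2k+3$ that does not split into more than $k$ nonempty zero-sum blocks.  A natural candidate is $(e_1^{\,2},e_2^{\,2},\dots)$ padded appropriately: concretely take $(k-1)$ copies of the pair $(e_1,e_1)$ together with the length-$5$ sequence $(e_1,e_2,e_2,e_3,e_1+e_2+e_3)$, or some similar combination; the point is that every zero-sum block must use an even number of each basis vector occurrence, and a careful count of the multiplicities shows one cannot form more than $k$ blocks.  One should double-check the total length is $2(k-1)+5=2k+3$ and that the full sequence is indeed zero-sum.  Verifying that no factorization into $k+1$ blocks exists is the routine but slightly fiddly combinatorial step here; it amounts to a parity/counting argument on how the $e_i$'s distribute among blocks.

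For the upper bound $\D_k(Z_2^3)\le 2k+3$, the strategy I would follow is induction on $k$ using the reduction machinery from Section~\ref{sec:red}, or directly a Davenport-constant recursion.  Take a zero-sum sequence $S$ over $Z_2^3$ of length $2k+4$; I want to show it factors into at least $k+1$ nonempty zero-sum blocks.  If $S$ contains a repeated element, say $a$ occurs at least twice, then $(a,a)$ is a zero-sum block and $S=(a,a)\cdot S'$ with $|S'|=2k+2=2(k-1)+4$, so by induction $S'$ splits into $\ge k$ blocks and $S$ into $\ge k+1$.  If $S$ is squarefree, it is supported on the $7$ nonzero elements of $Z_2^3$, so $|S|\le 7$; hence we only need to treat $k=1$ (where $2k+4=6\le 7$) and $k=2$ (where $2k+4=8>7$, impossible), and the base case $k=1$ is the classical statement $\D(Z_2^3)=4$ already established.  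Actually one must be slightly more careful: the induction base is $k=1$, and the squarefree reduction shows every $S$ with $|S|\ge 2k+4$ and $k\ge 2$ has a repeated element, so the induction goes through cleanly.

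The main obstacle is getting the lower-bound construction exactly right and rigorously checking that it cannot be refined into $k+1$ zero-sum blocks — this is where an off-by-one error is easiest to make, and where one genuinely uses the special structure of $Z_2^3$ (namely that $a+a=0$ forces blocks to respect a parity condition).  A secondary point of care is making sure the inductive step for the upper bound correctly reduces the length by exactly $2$ when peeling off a squared pair, so that the exponent bookkeeping $2k+4 \rightsquigarrow 2(k-1)+4$ matches.  Everything else follows from Proposition~\ref{prop:halter-koch} philosophy (indeed one could alternatively try to deduce this from $\D_k(Z_2\times Z_2\times Z_2)$ via the subgroup inequalities, using $B=Z_2$ and $A/B=Z_2\times Z_2$, but the bounds obtained that way are $\D_k(Z_2^3)\le \D_{\D_k(Z_2^2)}(Z_2)=2\D_k(Z_2^2)$ and $\D_k(Z_2^3)\le \D_{\D_k(Z_2)}(Z_2^2)$, neither of which is tight, so the direct argument above is preferable).
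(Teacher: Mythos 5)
The paper itself offers no proof of this proposition---it is quoted from Delorme--Ordaz--Quiroz---so your attempt has to stand on its own. Your upper bound argument (peel off a block $(a,a)$ from any repeated element and induct, with base case $\D_1(Z_2\times Z_2\times Z_2)=4$) is essentially sound, with one small oversight: a squarefree zero-sum sequence over $Z_2\times Z_2\times Z_2$ can have length $8$, not $7$, since it may contain $0$. This matters precisely in the case $k=2$, $|S|=2k+4=8$, where $S$ may consist of $0$ together with all seven nonzero elements; that sequence has no repeated element, so your induction step does not apply and it must be split by hand, e.g.\ as $(0)\cdot(a,b,a+b)\cdot(c,a+c,b+c,a+b+c)$, which gives the required three blocks. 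With that fix (and treating a possible $0$ in the peeling step, which is harmless) the inequality $\D_k\le 2k+3$ is proved.

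The genuine gap is the lower bound $\D_k(Z_2\times Z_2\times Z_2)\ge 2k+3$, which is the only part of the statement that does not follow from soft counting. The sequence you exhibit is not zero-sum: $(e_1,e_2,e_2,e_3,e_1+e_2+e_3)$ has sum $e_2$, and adjoining pairs $(e_1,e_1)$ cannot repair this; moreover the decisive step---that no factorization into $k+1$ blocks exists---is exactly what you defer as ``routine but fiddly,'' so nothing is actually verified. A correct extremal sequence for $k\ge 2$ is $S=(a^{2k-3},\,b,\,c,\,a+b,\,a+c,\,b+c,\,a+b+c)$, i.e.\ every nonzero element once together with $k-2$ extra pairs $(a,a)$: it has length $7+2(k-2)=2k+3$ and each basis vector occurs an even number of times, so $S$ is zero-sum. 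The verification is then a short count. Since $0\notin S$, every block in a factorization has length at least $2$; the only element of multiplicity greater than one is $a$, so every length-$2$ block equals $(a,a)$, and because $a$ has odd multiplicity $2k-3$ there are at most $t\le k-2$ such blocks; the remaining $N-t$ blocks have length at least $3$, whence $2t+3(N-t)\le 2k+3$, so $3N\le 2k+3+t\le 3k+1$ and $N\le k$. Without some such explicit sequence and count, your proof of the lower bound is incomplete, and hence so is the proof of the equality.
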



\section{The semidirect product }\label{ch:semidir}

Our main aim in the present chapter is to give upper bounds  on $\beta(Z_p\rtimes Z_q)$ for the non-abelian semidirect product $Z_p\rtimes Z_q$, where $p,q$ are odd primes, $q\mid p-1$.  
It is an open conjecture of Pawale reported in \cite{wehlau} that  $\beta(Z_p\rtimes Z_q)=p+q-1$. 
The lower bound $\beta(Z_p \rtimes Z_q) \ge p+q -1$ follows from a more general result in \cite{CzD:2} 
(and can also be seen directly).  
We provide here upper bounds that improve on  \cite{domokos-hegedus} and \cite{pawale}, and are sufficient for the proof of Theorem~\ref{thm:main}. 

\subsection{Extending Goebel's algorithm}\label{sec:goebel}

Let $G$ be a finite group with  a proper  abelian normal subgroup $A$.
Consider a monomial representation $G \to \GL(V)$ which maps  $A$ to diagonal matrices. 
This presupposes the choice of a basis  $x_1,...,x_n$ in the dual  space $V^*$, 
which are $A$-eigenvectors permuted up to scalars under the action of $G/A$. 
We shall always tacitly choose  them as the variables in the coordinate ring $L:=\field[V]$. 
Goebel  developed an algorithm for the case when $V$ is a permutation representation
(see \cite{goebel}, \cite{neusel}, \cite{derksen-kemper}) 
which we will adapt here to this more general case. 

The conjugation action of $G$ on $A$ induces an action on $\hat A$ in the standard way, and we consider the corresponding action of $G$ on
$\mathcal{M}(\hat A)$. 
Extending slightly  the notation of Section~\ref{sec:davenport} we define the weight sequence and the weight for any non-zero scalar multiple of a monomial: 
for $m\in M(V)$ and $c\in\field^\times$ set $\Phi(cm):=\Phi(m)$ and $\theta(cm):=\theta(m)$.  
It is easy to check that for any monomial $m\in M(V)$ and $g\in G$ we have $\Phi(m^g)=\Phi(m)^g$ and consequently $\theta(m^g)=\theta(m)^g$. 
Enumerate the $G$-orbits in $\hat A$ in a fixed order $O_1,\dots,O_l$. 
For a $G$-orbit $O$ in $\hat A$ let $S^O$ be the subsequence of $S$ consisting of its elements belonging to $O$.  
Now $S$ has the canonic factorization $S=S^{O_1}\dots S^{O_l}$. 
In addition any sequence $S$ over $\hat A$ has 
a unique factorization $S = R_1R_2... R_h$ such that each $R_i \subseteq \hat A$ is multiplicity-free and $R_1 \supseteq ... \supseteq R_h$; 
we call this the \emph{row decomposition} of $S$ and we refer to  $R_i$ as the $i$th  \emph{row}  of $S$, whereas  $\supp(S) := R_1$ is its \emph{support} and $h(S):=h$ is its \emph{height}. In other terms  $h(S)$  is the maximal multiplicity of the elements in $S$. 
The intuition behind this  is that we like to think of sequences as Young diagrams where
the multiplicities  in $S$ of the different elements of $\hat A$  are represented by the heights of the columns. 
Denote by $\mu(S)$ the non-increasing sequence of integers $(\mu_1(S),\dots,\mu_h(S)):=(|R_1|, ..., |R_h|)$. 
By the  \emph{shape} $\lambda(S)$ of $S$ we mean the $l$-tuple of such partitions 
\[ \lambda(S) := (\mu(S^{O_1}),\dots,\mu(S^{O_l})). \]
The set of the shapes is equipped with the usual reverse lexicographic order, i.e. $\lambda(S) \prec \lambda(T)$ if $\lambda(S)\neq\lambda(T)$ and 
for the  smallest index $i$ such that $\mu(S^{O_i})\neq \mu(T^{O_i})$,  we have $\mu_j(S^{O_i})>\mu_j(T^{O_i})$ for the smallest index $j$ with $\mu_j(S^{O_i})\neq \mu_j(T^{O_i})$. 
Observe that $\lambda(ST) \prec \lambda(S)$ always holds
but on the other hand $\lambda(S) \prec \lambda(S')$ does not imply $\lambda(ST) \prec \lambda(S'T)$. 
Abusing notation for any monomial $m \in \field[V]$ we write $\lambda(m)$, $h(m)$ and $\supp(m)$ 
for the shape, height and the support of its weight sequence $\Phi(m)$.

In the following we shall assume that we fixed a subset $\mathcal{V}$ of the variables permuted by $G$ up to non-zero scalar multiples; we adopt the convention that unless 
explicitly stated otherwise, $\mathcal{V}$ is the set of all variables. Any monomial $m$ factors as $m=m_{\mathcal{V}}m_{\widehat{\mathcal{V}}}$, where $m_{\mathcal{V}}$ is a product of variables belonging to $\mathcal{V}$, and $m_{\widehat{\mathcal{V}}}$ does not involve variables from $\mathcal{V}$. We shall also use  the notation 
$\lambda_{\mathcal{V}}(m):=\lambda(m_{\mathcal{V}})$. 

\begin{definition}~\label{def:terminal} An $A$-invariant monomial $u$ is a \emph{good factor} of a monomial $m=uv$  if   
$\lambda_{\mathcal{V}}(u^bv)\prec \lambda_{\mathcal{V}}(m)$ 
holds for all $b\in G\setminus A$; note that this forces $0<\deg(u)<\deg(m)$. 
We say that $m$ is \emph{terminal} if it has no good factor. 
\end{definition}

\begin{lemma} \label{goebel} $L_+=\field[V]_+$ is generated as an $L^G$-module by the terminal monomials. 
\end{lemma}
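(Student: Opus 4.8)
The plan is to mimic G\"obel's algorithm: start from an arbitrary monomial $m$ and repeatedly split off a good factor, showing that the process terminates and that at each step we stay inside the $L^G$-module generated by ``simpler'' monomials. Since $L_+$ is spanned over $\F$ by monomials, and the $G$-orbit sums of monomials (i.e.\ the images of the Reynolds operator) already generate $L_+$ as an $L^G$-module modulo lower-shape terms, it suffices to prove that every monomial $m$ lies in the $L^G$-submodule generated by terminal monomials. I would argue by Noetherian induction on the shape $\lambda_{\mathcal V}(m)$ with respect to the reverse lexicographic order $\prec$ (this is a well-order on partition-tuples of bounded total size, so descent terminates).

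The inductive step is where the relative transfer map $\tau_A^G$ enters. Suppose $m=uv$ with $u$ an $A$-invariant monomial that is a good factor, so $\lambda_{\mathcal V}(u^bv)\prec\lambda_{\mathcal V}(m)$ for all $b\in G\setminus A$. Consider $\tau_A^G(u)\in L^G$; since $u$ is $A$-invariant and $G/A$ permutes the eigen-variables up to scalars, $\tau_A^G(u)$ is, up to a nonzero scalar (here we use $\ch(\F)\nmid[G:A]$, which holds under the running hypothesis $\ch(\F)\nmid|G|$), a sum $\sum_{b} c_b\, u^b$ over coset representatives $b$ of $A$ in $G$, with $c_1\neq 0$. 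Then
\[
\tau_A^G(u)\cdot v \;=\; c_1\, uv \;+\; \sum_{b\notin A} c_b\, u^b v,
\]
so $m=uv$ is, modulo the ideal-multiple $\tau_A^G(u)\cdot v\in L^G_+\cdot L_+ \subseteq L^G\cdot L_+$, a linear combination of the monomials $u^b v$, each of which has strictly smaller shape $\lambda_{\mathcal V}(u^bv)\prec\lambda_{\mathcal V}(m)$. Hmm --- I should be careful: $u^bv$ need not be a single monomial if the variables pick up scalars, but it is a scalar multiple of a monomial, which is all that matters. By the induction hypothesis each $u^bv$ lies in the $L^G$-module generated by terminal monomials, hence so does $m$. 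If on the other hand $m$ is already terminal, there is nothing to do. This closes the induction, and since the shapes are bounded below, the base case consists precisely of the terminal monomials (and the monomial $1$, which is not in $L_+$).

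The main obstacle I anticipate is bookkeeping about scalars and the precise sense in which ``shape decreases.'' One must verify that $\lambda(ST)\prec\lambda(S)$ is strict when $S$ is nonempty --- stated in the text --- and, more delicately, that replacing a factor $u$ by $u^b$ genuinely lowers $\lambda_{\mathcal V}$ of the whole product rather than just of the $u$-part; this is exactly the content of the definition of ``good factor,'' so the real work is hidden in showing later that terminal monomials are sufficiently restricted (bounded degree), but that is a separate lemma, not needed here. A second subtlety is that $\tau_A^G(u)$ could conceivably be zero or have vanishing coefficient $c_1$ if the $G$-orbit of $u$ behaves badly with scalars; but because $A$ acts diagonally and $u$ is $A$-invariant, the coset representative $b=1$ contributes $u$ itself with coefficient $1$, and distinct monomials appearing in $\tau_A^G(u)$ cannot cancel $u$ unless $u^b$ is a scalar multiple of $u$ for some $b\notin A$ --- in which case $\lambda_{\mathcal V}(u^bv)=\lambda_{\mathcal V}(uv)$, contradicting that $u$ is a good factor. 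So the coefficient of $m$ in $\tau_A^G(u)\cdot v$ is a nonzero element of $\F$, and we may solve for $m$. That resolves the apparent gap.
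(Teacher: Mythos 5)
Your proposal is correct and follows essentially the same route as the paper: induction on the shape $\lambda_{\mathcal V}(m)$ with respect to $\prec$, replacing a non-terminal monomial $m=uv$ by $\tau_A^G(u)\,v\in L_+^GL_+$ minus the terms $u^bv$ with $b\notin A$, each of strictly smaller shape by the definition of a good factor. Your extra bookkeeping (that the coefficient of $m$ itself is nonzero, since $u^b$ proportional to $u$ would contradict goodness) is a valid elaboration of a point the paper leaves implicit; note only that no hypothesis on $\ch(\F)$ is needed here, since $\tau_A^G(u)\in L^G$ holds automatically.
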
 

\begin{proof} We prove by induction on $\lambda_{\mathcal{V}}(m)$ with respect to $\prec$ that if $m$ is not terminal, then it can be expressed 
modulo $L_+L^G_+$
as a linear combination of terminal monomials. Indeed, take a good divisor $u$ of $m=uv$. 
Then we have
\begin{equation}\label{eq:tekeres} 
\sum_{b\in G/A}u^bv=\tau^G_A(u)v\in L_+^GL_+. \end{equation} 
Since for every monomial in the sum on the left hand side except for $m$ we have $\lambda_{\mathcal{V}}(u^bv)\prec\lambda_{\mathcal{V}}(m)$, our claim on $m$ holds by the induction hypothesis. 
\end{proof} 

At this level of generality there might be an element $b \in G\setminus A$ such that $\theta(x_i^b) = \theta(x_i)$ for every variable $x_i$, 
and then  every monomial  qualifies as terminal by our definition. 
The concept of terminality is particularly useful when 
\begin{align} \label{G/A-kikotes}
\chi^b\neq \chi \quad \text{ for each  } \quad b\in G\setminus A\quad \text{ and  }\quad \chi\in\hat A\setminus\{0\}.
\end{align} 
For the rest of this section we assume that \eqref{G/A-kikotes} holds for $(G,A)$. An obvious necessary condition for \eqref{G/A-kikotes} to hold is that $A$ must be a self-centralizing, 
hence maximal abelian subgroup in $G$, and the order of $G/A$ must divide $|A|-1$, hence $G$ is the semidirect product of $A$ and $G/A$
by the Schur-Zassenhaus theorem. 
In fact  condition \eqref{G/A-kikotes} is equivalent to the requirement that  
$G$ is a Frobenius group with abelian  Frobenius kernel $A$. 
In this article we will only study  in greater detail 
the non-abelian semidirect products $Z_p\rtimes Z_q$, $Z_p\rtimes Z_{q^n}$ where $Z_{q^n}$ acts faithfully on $Z_p$, and the alternating group $A_4$.

Note that if \eqref{G/A-kikotes} holds, then for any non-trivial $1$-dimensional $A$-module $U$ the $G$-module $\Ind_A^G(U)$ is irreducible
by Mackey's irreducibility criterion (cf. \cite{serre} ch. 7.4).
Moreover, the set of $A$-characters occurring in $\Ind_A^G(U)$ coincides with the $G/A$-orbit of the character of $A$ on $U$, and each $A$-character occurring in $\Ind_A^G(U)$ has multiplicity one.
Hence the $G/A$-orbits in  $\hat A\setminus \{0\}$ are in bijection with  the isomorphism classes of those irreducible $G$-modules that are induced from a $1$-dimensional $A$-module.

\begin{definition}
A monomial $m \in \field[V]$ or its weight sequence $S= \Phi(m)$ is called a \emph{brick} if 
$S$ is the orbit of a minimal non-trivial subgroup of $G/A$. 
\end{definition}

\begin{remark} 
(i) If \eqref{G/A-kikotes} holds then every brick is $A$-invariant. 
Indeed, when $m \in \field[V]$ is a brick then $\Phi(m)$ is stabilized by some non-identity element $b \in G/A$,
hence $\theta(m)$ is fixed by $b$, which is only possible by \eqref{G/A-kikotes} if $\theta(m)=0$. 

(ii) If a monomial $m$ is not divisible by a brick, then $\Phi(m)\neq \Phi(m^b)$ for each $b\in G\setminus A$. 
\end{remark}

\begin{definition} \label{def:gapless}
A sequence $S$ over $\hat A$ with row-decomposition $S=R_1...R_h$ is called 
\emph{gapless} if for all $G/A$-orbits $O$ and all $i<h$ such that $R_i\cap O\neq \emptyset$
we have $R_i\cap O\neq R_{i+1}\cap O$ 
or $R_i \cap O = R_{i+1} \cap O =O$. 
A monomial $m\in \field[V]$ is called \emph{gapless} if its weight sequence $\Phi(m)$ is gapless. 
 \end{definition}

For our next result we will need the following easy combinatorial fact:

\begin{lemma} \label{lemma:easy} 
For any sequence $S = (s_1, ..., s_d)$ over an abelian group $A$ let 
$\Sigma(S) := \{ \sum_{i\in I} s_i: I  \subseteq \{1,...,d \} \}$.
If $A= Z_p$ for a prime $p$ and $S=(s_1,...,s_d)$ a sequence of non-zero elements of $Z_p$ 
then \[ |\Sigma(S)| \ge \min \{ p, d+1 \}.\] 
\end{lemma}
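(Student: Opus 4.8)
The plan is to prove this by induction on $d$, the length of the sequence, which is the classical Davenport--Erd\H{o}s--Ginzburg--Ziv style argument for cyclic groups. The base case $d=1$ is immediate: a single non-zero element $s_1$ gives $\Sigma(S) = \{0, s_1\}$, which has $2 = d+1$ elements (and $p \ge 2$).

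For the inductive step, suppose the claim holds for sequences of length $d-1$, and let $S = (s_1, \dots, s_d)$ be a sequence of non-zero elements of $Z_p$. Write $S' = (s_1, \dots, s_{d-1})$, so that $\Sigma(S) = \Sigma(S') \cup (\Sigma(S') + s_d)$. By the inductive hypothesis $|\Sigma(S')| \ge \min\{p, d\}$. If $|\Sigma(S')| = p$ we are already done since $\Sigma(S) \supseteq \Sigma(S')$ and $|\Sigma(S)| \le p$ trivially. So assume $|\Sigma(S')| \ge d$ but $|\Sigma(S')| < p$; it suffices to show that $\Sigma(S') + s_d$ is not contained in $\Sigma(S')$. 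Indeed, if $\Sigma(S') + s_d \subseteq \Sigma(S')$, then since adding $s_d$ is a bijection of $Z_p$ and $\Sigma(S')$ is finite, we would get $\Sigma(S') + s_d = \Sigma(S')$, hence $\Sigma(S')$ is invariant under translation by $s_d$. But $s_d \ne 0$ generates $Z_p$ (as $p$ is prime), so $\Sigma(S')$ would be a union of cosets of $\langle s_d \rangle = Z_p$, forcing $\Sigma(S') = Z_p$ (it is non-empty, containing $0$), contradicting $|\Sigma(S')| < p$. Therefore there exists an element of $\Sigma(S') + s_d$ outside $\Sigma(S')$, giving $|\Sigma(S)| \ge |\Sigma(S')| + 1 \ge d+1$, and combined with the trivial bound $|\Sigma(S)| \le p$ this yields $|\Sigma(S)| \ge \min\{p, d+1\}$.

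The one point requiring the slightest care — really the only obstacle, and a mild one — is the primality of $p$: it is used precisely to conclude that the subgroup generated by the non-zero element $s_d$ is all of $Z_p$, which is what makes the "translation-invariant proper subset" scenario impossible. Without primality the statement fails (e.g. in $Z_4$ with $S = (2,2,2)$ one gets $\Sigma(S) = \{0,2\}$), so this is where the hypothesis genuinely enters. Everything else is the standard Cauchy--Davenport-flavoured counting, and no separate appeal to Cauchy--Davenport is actually needed since the weaker "strictly grows unless already everything" observation suffices here.
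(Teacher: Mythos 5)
Your proof is correct. It follows the same induction on $d$ and the same decomposition $\Sigma(S)=\Sigma(S')\cup(\Sigma(S')+s_d)$ as the paper, but the engine of the inductive step is different: the paper simply invokes the Cauchy--Davenport theorem, applied to the sets $\Sigma(S')$ and $\{0,s_d\}$, to get $|\Sigma(S)|\ge |\Sigma(S')|+2-1$, whereas you replace that citation with the elementary observation that a nonempty proper subset of $Z_p$ cannot be invariant under translation by a nonzero element, since such an element generates $Z_p$ when $p$ is prime. Your version is self-contained and makes transparent exactly where primality enters (your $Z_4$ counterexample is a nice touch), which is what the ``no separate appeal to Cauchy--Davenport is needed'' remark buys; in effect you have reproved the special case of Cauchy--Davenport with $|B|=2$ via the standard stabilizer/translation argument. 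The paper's route is shorter and fits its context, since Cauchy--Davenport and related additive tools (e.g.\ the lemma on short zero-sum subsequences and the Dias da Silva--Hamidoune theorem) are used elsewhere in the same chapter anyway. One cosmetic point: in the base case it is cleaner to note $|\Sigma(S)|=2=\min\{p,2\}$ because $p\ge 2$, exactly as you did; no changes are needed to the argument itself.
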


\begin{proof} 
By the Cauchy-Davenport Theorem $ |A+B|\ge \min\{p,|A|+|B|-1\} $
for any non-empty subsets $A,B \subseteq Z_p$. Our claim follows from this by induction 
on $d$, as
$|\Sigma(S)| \ge |\Sigma(s_1,...,s_{d-1})| + |\{0, s_d \}| -1 \ge d+2-1$ for any $1<d<p$,
while the case $d=1$ is trivial.
\end{proof}

\begin{proposition} \label{prop:gapless}
Let $G=A\rtimes Z_n$ where  $A \cong Z_p$ for some prime $p$ and $Z_n$ acts faithfully on $A$. 
Let $V$ be a $G$-module and   $L:= \field[V]$, $R:= \field[V]^G$, and $\mathcal{V}$ any  subset of the  variables permuted by $G$ up to non-zero scalar multiples. 
Then $L_+/L_+R_+$ is spanned by monomials of the form $b_1\dots b_rm$, where 
each $b_i$ is an $A$-invariant variable or a  brick composed of variables in $\mathcal{V}$ 
 while  $m_{\mathcal{V}}$ has a gapless divisor of degree 
at least 
\[\min\{\deg(m_{\mathcal{V}}), \deg(m)-p+1\}.\] 
\end{proposition}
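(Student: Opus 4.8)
The plan is to prove Proposition~\ref{prop:gapless} by a refinement of the reduction carried out in Lemma~\ref{goebel}, tracking not just the shape $\lambda_{\mathcal V}(m)$ but also, as a secondary invariant, the degree of the largest gapless divisor of $m_{\mathcal V}$. So I would set up a Noetherian induction on $\lambda_{\mathcal V}(m)$ with respect to $\prec$ (using $\lambda(ST)\prec\lambda(S)$ to keep the induction well-founded), and within a fixed shape, a secondary induction that pushes the gapless part of the monomial upward. Starting from a monomial $m$, first peel off all $A$-invariant variables and all bricks built from variables in $\mathcal V$ that divide $m$; write $m=b_1\cdots b_r\,m'$ with $m'$ not divisible by any such brick or $A$-invariant variable. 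It suffices to show the claim for $m'$ in place of $m$, since multiplying back the $b_i$ only rescales degrees by the same amount on both sides of the $\min$. So from now on I may assume $m=m'$ has no brick divisor among the $\mathcal V$-variables and no $A$-invariant $\mathcal V$-variable divisor.

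Now look at the weight sequence $S:=\Phi(m_{\mathcal V})$ with its row decomposition $S=R_1\cdots R_h$, and let $S'=R_1\cdots R_j$ be its maximal gapless initial segment (i.e., $j$ maximal such that the first $j$ rows form a gapless sequence); if $j=h$ we are done. Otherwise there is a $G/A$-orbit $O$ with $R_j\cap O=R_{j+1}\cap O\subsetneq O$, witnessing the ``gap''. The idea is to produce, from the variable whose weight is the element of $(R_j\setminus R_{j+1})\cap O$ sitting in row $j$ together with the structure of $\widehat{\mathcal V}$ — more precisely, to find inside $m_{\mathcal V}$ an $A$-invariant divisor $u$ supported on $\mathcal V$ with $0<\deg u<\deg m_{\mathcal V}$ such that applying $\tau^G_A(u)$ via \eqref{eq:tekeres} replaces $m$ by a combination of monomials each of which either has strictly smaller $\lambda_{\mathcal V}$, or has the same $\lambda_{\mathcal V}$ but a strictly longer gapless initial segment. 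The combinatorial heart is to locate such a good factor $u$: one forms it as a zero-sum subsequence contained in the ``thin'' part of the Young diagram above the gap, and this is exactly where Lemma~\ref{lemma:easy} enters — since $A\cong Z_p$, a sufficiently long sequence of nonzero weights in $\hat A$ has a zero-sum subsequence, and a short enough one can be completed within the $\widehat{\mathcal V}$-part without enlarging any column too far; controlling the count so that $u$ remains a \emph{good factor} (i.e.\ $\lambda_{\mathcal V}(u^b v)\prec\lambda_{\mathcal V}(m)$ for all $b\in G\setminus A$, which by the remark after Definition~\ref{def:gapless} is automatic once $m$ has no brick divisor) is the delicate bookkeeping step.

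Once the good factor $u$ is in hand, equation~\eqref{eq:tekeres} gives $\tau^G_A(u)\,v\in L_+^G L_+$, so $m\equiv -\sum_{b\ne 1}u^b v\pmod{L_+^G L_+}$; every monomial $u^b v$ on the right with $b\notin A$ has $\lambda_{\mathcal V}(u^b v)\prec\lambda_{\mathcal V}(m)$, so by the primary induction hypothesis it is already a combination of the desired form, while the one remaining monomial with $b\in A$ equals $m$ up to scalars from the diagonal $A$-action, hence has the same $\lambda_{\mathcal V}$ but — by the way $u$ was chosen — a gapless initial segment that is at least one row longer; the secondary induction then finishes. The degree bookkeeping is the last routine check: the gapless divisor one ultimately extracts has degree at least $\deg(m_{\mathcal V})$ minus the total number of weights lying strictly below the gaps, and since all those weights are nonzero elements of $\hat A\cong Z_p$ and any $p$ of them would contain a zero-sum subsequence (again Lemma~\ref{lemma:easy}), forcing a further reduction, in a terminal-for-this-process monomial the ``non-gapless surplus'' has size at most $p-1$ outside $\mathcal V$-bricks, giving the bound $\min\{\deg(m_{\mathcal V}),\deg(m)-p+1\}$. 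I expect the main obstacle to be precisely the construction of the good factor $u$ in the secondary induction step: one must simultaneously (a) keep $u$ $A$-invariant, (b) keep $\deg u$ strictly between $0$ and $\deg m$, (c) ensure the shape does not increase for any $b\in G\setminus A$, and (d) guarantee that the ``$b\in A$ copy'' of $m$ genuinely has a longer gapless segment rather than merely an equal one — and reconciling (c) and (d) is where the orbit-combinatorics of $G/A$ acting on $\hat A$, together with the Cauchy–Davenport input, has to be used most carefully.
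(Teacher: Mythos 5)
Your plan is built from the right ingredients -- the shape induction via \eqref{eq:tekeres} (i.e.\ Lemma~\ref{goebel}), the location of the first ``gap'' row, and the use of Lemma~\ref{lemma:easy} on a remainder of degree at least $p-1$ to complete to an $A$-invariant factor, which is exactly where the term $\deg(m)-p+1$ comes from in the paper's argument as well. But the proposal has a genuine gap precisely at the step you yourself flag as the ``delicate bookkeeping'': the construction of the good factor and the verification that it \emph{is} good. You assert that condition (c), $\lambda_{\mathcal V}(u^bv)\prec\lambda_{\mathcal V}(m)$ for all $b\in G\setminus A$, ``is automatic once $m$ has no brick divisor.'' That is false: the remark following the brick definition only gives $\Phi(m)\neq\Phi(m^b)$, not that an arbitrary $A$-invariant divisor decreases the shape, and since $\prec$ is not compatible with concatenation this cannot be waved through. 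In the paper this is the substantive part of the proof: one takes the maximal gapless divisor $m^*$, the offending variable $x$ in the minimal orbit $O_i$, a divisor $u\mid m^*$ with $\Phi(u)=R_{t+1}$ so that $\Phi(ux)=R_t$, completes with $\hat u\mid m/(m^*x)$ of weight $-\theta(ux)$ (here $\deg(m/(m^*x))\ge p-1$ and Lemma~\ref{lemma:easy} are used), and then checks goodness of $w=xu\hat u$ by comparing the partial sums $\sum_{s\le t}\mu_s$ of $\Phi((w^bv)_{\mathcal V})^{O_i}$ with those of $\Phi(m_{\mathcal V})^{O_i}$, the strict inequality coming from the fact that $R_t$ is multiplicity-free and not a full orbit, so $R_t^b\setminus R_t\neq\emptyset$. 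None of this is in your proposal; also note that $\hat u$ must in general be allowed to involve variables outside $\mathcal V$, contrary to your initial requirement that $u$ be supported on $\mathcal V$.

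A second, structural problem is your secondary induction. In \eqref{eq:tekeres} the term of the sum corresponding to the trivial coset is $m$ itself; there is no separate ``$b\in A$ monomial'' with the same shape but a longer gapless initial segment to which one could pass. So the induction step you describe for the secondary invariant does not exist as stated. (Charitably, it is also unnecessary: once a genuine good factor is produced, all remaining terms have strictly smaller $\lambda_{\mathcal V}$ and the primary induction suffices -- which is exactly the paper's contrapositive formulation, namely that a terminal monomial without brick or $A$-invariant-variable divisors must already contain the required gapless divisor.) Finally, your justification for peeling off the $b_i$ (``rescales degrees by the same amount on both sides of the $\min$'') is not an argument; the peeled form is what the proposition asserts, but the well-foundedness of the reduction after multiplying the congruence back by $b_1\cdots b_r$ needs to be said carefully (e.g.\ by inducting on the shape of the brick-free part), since $\prec$ does not behave well under multiplication.
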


\begin{proof}  Since $A$ has prime order, a non-trivial character $\chi\in\hat A$ takes distinct values on the elements of $A$. As $Z_n$ acts faithfully on $A$, for any non-identity element $g$ of $Z_n$ there is an $a\in A$ with $a^g\neq a$, thus $\chi^g(a)=\chi(a^g)\neq \chi(a)$. So \eqref{G/A-kikotes} holds for $(G,A)$. 
By Lemma \ref{goebel} it suffices to show that for any terminal monomial  $m\in L_+$  
not containing a brick over $\mathcal{V}$ or  an $A$-invariant variable, 
$m_{\mathcal{V}}$ has  a gapless divisor of degree at least $\min\{\deg(m_{\mathcal{V}}), \deg(m)-p+1\}$. 
Let $m^*$ be a gapless divisor of $m_{\mathcal{V}}$ of maximal possible degree, and suppose for contradiction that $\deg(m^*)<\min\{\deg(m_{\mathcal{V}}), \deg(m)-p+1\}$.  Then there is a variable $x$ such that 
$m^*x$ is a divisor of $m_{\mathcal{V}}$ and $m^*x$ is not gapless, moreover, the index of the orbit $O_i$ containing $\theta(x)$ is minimal possible, i.e. for all $j<i$ we have  $\Phi(m^*)^{O_j}=\Phi(m_{\mathcal{V}})^{O_j}$. 
Let $\Phi(m^*)^{O_i} = R_1R_2 ... R_h$ be the row decomposition of $\Phi(m^*)^{O_i}$, and denote by $t$  the multiplicity of $\theta(x)$ in $\Phi(m^*)$. 
It is then necessary that   $R_t = R_{t+1} \cup \{ \theta(x) \}$, for otherwise $m^*x$ would still be gapless. 
Take a divisor $u \mid m^*$ with $\Phi(u) = R_{t+1}$, hence $\Phi(ux)=R_t$ and the row decomposition of $m^*/u$ is $R_1\dots R_tR_{t+2}\dots R_h$.  
Now consider the remainder $m/(m^*x)$: it  contains no variables of weight $0$, 
and its degree is at least $p-1$ by assumption, hence $|\Sigma(\Phi(m/(m^*x)))| = p$ by Lemma~\ref{lemma:easy}.
Thus $m/(m^*x)$ has a (possibly trivial) divisor $\hat u$ for which $\theta(\hat{u}) = - \theta(ux)$. It is easy to see that  $w:=xu\hat u$ is a good divisor of $m$. Indeed,   
set $v := m/w$, and take $b\in G\setminus A$; clearly, $m^*/u$ divides $v$.  For $j<i$, we have $\Phi((w^bv)_{\mathcal{V}})^{O_j}
=\Phi(m_{\mathcal{V}})^{O_j}$. 
Moreover,  
$\mu_s(\Phi((w^bv)_{\mathcal{V}})^{O_i})\ge 
\mu_s(\Phi(m_{\mathcal{V}})^{O_i})$ for $s=1,\dots t$. Here we have  strict inequality at least for one $s$:  
by our assumption  $\Phi((ux)_{\mathcal{V}})=R_t$ is not divisible by a brick, so $R_t^b \setminus R_t \neq \emptyset$, hence 
the support of $\Phi(w^b_{\mathcal{V}})^{O_i}$ is not contained in $R_t$, implying  
$\sum_{s=1}^t\mu_s(\Phi((w^bv)_{\mathcal{V}})^{O_i})>\sum_{s=1}^t \mu_s(\Phi((m^* /u)_{\mathcal{V}})^{O_i})$. 
This contradicts  the assumption that $m$ was terminal. 
\end{proof}


\subsection{Factorizations of gapless monomials}

Denote by $\mathcal{B}$ the ideal of $L=\field[V]$ generated by the bricks, 
and denote by  $\mathcal{G}_d$ the ideal of $L$ generated by the gapless monomials of degree at least $d$. 
Moreover, for a set $\mathcal{V}$ of variables as in Proposition~\ref{prop:gapless}, 
denote by $\mathcal{G}_d(\mathcal{V})$ the ideal of $L$  spanned by monomials with a gapless divisor 
of degree at least $d$ composed from variables in $\mathcal{V}$.

\begin{proposition} \label{binom}
Let $V= \Ind_A^G U $ be  an isotypic $G$-module belonging to a $G$-orbit $O \subseteq \hat A$, 
and  $s$ the index of a minimal nontrivial subgroup of $G/A$. Then
\[ \mathcal{G}_ {d}  \subseteq \mathcal{B}  \qquad \text{ where } \quad d = \binom { |O| - s +1}{2} +1. \]
\end{proposition}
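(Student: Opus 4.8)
We are in the situation where $V = \Ind_A^G U$ is isotypic with $A$-characters forming a single $G$-orbit $O \subseteq \hat A$, so every variable has weight in $O$ and bricks are precisely the monomials whose weight sequence is the $G/A$-orbit of the minimal nontrivial subgroup, of length exactly $s$. The goal is to show that any gapless monomial of degree at least $d = \binom{|O|-s+1}{2}+1$ is divisible by a brick, i.e. lies in $\mathcal{B}$.

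My plan is to reduce the inclusion $\mathcal{G}_d \subseteq \mathcal{B}$ to a single divisibility statement about monomials: since $\mathcal{B}$ is a monomial ideal, it suffices to show that every \emph{generator} of $\mathcal{G}_d$, i.e. every gapless monomial $m$ with $\deg(m)\ge d$, is divisible by a brick. To prepare for this I would first translate the representation-theoretic data into combinatorics. Since $V$ is isotypic belonging to $O$, every variable has weight in $O$, so $\Phi(m)$ is a sequence over $O$ and $\supp(m)\subseteq O$. Let $H\le G/A$ be a minimal nontrivial subgroup, so $[G/A:H]=s$. Because $(G,A)$ satisfies \eqref{G/A-kikotes}, every element of $O$ has trivial $G/A$-stabilizer; hence $|O|=|G/A|$ and the $H$-orbits partition $O$ into exactly $s$ disjoint blocks, each of size $|H|=|O|/s$. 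A brick is precisely a squarefree monomial whose variables have weights running over one of these blocks, and a brick divides $m$ exactly when $\supp(m)$ contains the corresponding block.

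With this dictionary the proof becomes a staircase count. Take a gapless $m$ with $\deg(m)\ge d$ and suppose, for contradiction, that $\supp(m)$ contains no block. Then from each of the $s$ disjoint blocks $\supp(m)$ omits at least one element, so $|\supp(m)|\le |O|-s$. Write the row decomposition $\Phi(m)=R_1\cdots R_h$, so $R_1=\supp(m)$ and $R_1\supseteq\cdots\supseteq R_h$. Since $|R_i|\le|\supp(m)|<|O|$, no row can equal $O$, and then gaplessness (Definition~\ref{def:gapless}) forces $R_1\supsetneq R_2\supsetneq\cdots\supsetneq R_h$, so the sizes $|R_i|$ strictly decrease. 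Therefore
\[
\deg(m)=\sum_{i=1}^{h}|R_i|\;\le\;\sum_{j=1}^{|\supp(m)|}j\;=\;\binom{|\supp(m)|+1}{2}\;\le\;\binom{|O|-s+1}{2}\;=\;d-1,
\]
contradicting $\deg(m)\ge d$. Hence $\supp(m)$ contains some block, so $m$ is divisible by the associated brick and $m\in\mathcal{B}$; as this holds for all generators of $\mathcal{G}_d$, we get $\mathcal{G}_d\subseteq\mathcal{B}$.

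The computation is routine, so the only real content is the translation step. The point I expect to need care is the handling of rows equal to $O$: gaplessness \emph{does} allow repetitions at the level of the full set $O$, which would break the strict-decrease estimate, so one must first observe that a brick-free support has size strictly less than $|O|$, which excludes those degenerate rows and makes the strictly shrinking staircase — hence the triangular-number bound $\binom{|\supp(m)|+1}{2}$ — available. Everything else is forced once the dictionary "brick $\leftrightarrow$ block of size $|O|/s$" and "divisible by a brick $\leftrightarrow$ support contains a block" is in place.
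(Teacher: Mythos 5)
Your proof is correct and follows essentially the same route as the paper's: assuming no brick divides $m$, the support misses at least one element of each of the $s$ disjoint $H$-orbit blocks, so $|R_1|\le |O|-s$, gaplessness then forces strictly decreasing rows (your explicit exclusion of rows equal to $O$ just fills in a detail the paper leaves implicit), and the triangular-number count gives $\deg(m)\le\binom{|O|-s+1}{2}=d-1$. The only blemish is the slip in your opening paragraph asserting bricks have length $s$ — a brick has length $|H|=|O|/s$ — but your main argument uses the correct block sizes, so nothing is affected.
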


\begin{proof}
Let $m \in \field[V]$ be a gapless monomial not divisible by a brick.  
In the row decomposition $\Phi(m) = R_1 ...R_h$ we then have $|R_{i+1}| < |R_i| $ for every $1 \le i < h$, 
and $|R_1| \le |O| - s$, so $\deg(m)\le 1+2 +...+ (|O|-s) = \binom{ |O| - s +1}{2} $.
\end{proof}

\begin{corollary}\label{cor:skatulya} 
Let $A= Z_p$ and $G = A \rtimes Z_{q^n}$ where $Z_{q^n}$ acts faithfully on $A$. 
Setting $c = \frac{p-1}{q^n} $  and $ d = \binom {q^n - q^{n-1} + 1}{2}$ and $L = \field[W]$, $R = \field[W]^G$ for  a $G$-module $W$   we have
\[ \beta(L_+,R) \le (q^{n}- 2)q + \max \{cd, p+d -1,p+q \}.  \]
\end{corollary}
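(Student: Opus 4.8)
The strategy is to invoke Lemma~\ref{induk} with $H=A$, so that $[G:H]=q^n$, and choose the parameters $r$ and $s$ so that the two terms inside the maximum in that lemma match the quantities on the right-hand side of the claimed inequality. Concretely, I would set $r:=q^{n-1}$ and $s:=q$. With these choices the coefficient $([G:H]-r)s$ becomes $(q^n-q^{n-1})q$, which is close to but not quite $(q^n-2)q$; I will need to be slightly more careful, separating the brick-generated part from the rest. The cleaner route is first to pass from $L_+/L_+^GL_+$ to the submodule $M$ modulo bricks, using Proposition~\ref{prop:gapless} and Proposition~\ref{binom}/Corollary~\ref{cor:skatulya}'s setup: since $G$ acts on the relevant isotypic components with orbits of size $q^n$ and the minimal nontrivial subgroup of $Z_{q^n}$ has index $q$ (so $s=q$ in Proposition~\ref{binom}), every gapless monomial of degree at least $d+1$ with $d=\binom{q^n-q^{n-1}+1}{2}$ lies in the brick ideal $\mathcal{B}$.

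First I would reduce to controlling $\beta(M,\F[R_{\le s}])$ and $\beta_r(M,L^A)$ for $M=L_+/(L_+^GL_+ + \mathcal{B})$ (or work with the ideal $\mathcal{B}$ explicitly as an $R$-module with its own degree bound). The brick ideal contributes invariants of controlled degree: a brick has degree $q^{n-1}$ (it is the orbit of a minimal nontrivial subgroup, which has order $q^{n-1}$... here one must be careful about which subgroup is "minimal nontrivial" — in $Z_{q^n}$ the minimal nontrivial subgroup is $Z_q$, so its orbits, i.e. bricks, have size $q^n/q = q^{n-1}$), so the brick ideal is generated in degree $q^{n-1}$ and after factoring it out we reduce the degree budget. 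Then, by Proposition~\ref{prop:gapless}, what remains is spanned by products $b_1\cdots b_r m$ where the $b_i$ are $A$-invariant variables or bricks, and $m_{\mathcal{V}}$ has a gapless divisor of degree at least $\min\{\deg(m_{\mathcal{V}}),\deg(m)-p+1\}$; combined with $\mathcal{G}_{d+1}\subseteq\mathcal{B}$ this forces the "non-brick tail" to have degree at most roughly $d + (p-1)$, giving the terms $p+d-1$ and $cd$ (the latter coming from the isotypic components where the relevant orbit $O$ has size $q^n$ but sits inside $\hat A = Z_p$ with $c=\frac{p-1}{q^n}$ such orbits, contributing a factor $c$ to the $d$-bound).

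The bookkeeping then goes: apply Lemma~\ref{lemma:induk1} with $k=q^n$, $r=q^{n-1}$, $I=L^A$, $S=\F[R_{\le q}]$, using $\beta_k(S)\le kq$, exactly as in the proof of Lemma~\ref{induk}. The coefficient $(q^n-r)q=(q^n-q^{n-1})q$ should actually be improvable to $(q^n-2)q$ by noticing that the bricks already absorbed some of the multiplicity — the factors $b_1\cdots b_r$ in Proposition~\ref{prop:gapless} with $r$ of them available, each of degree $q^{n-1}\ge q$, let us trade $r$ "expensive" generators against the cruder count, and the $-2$ rather than $-q^{n-1}$ reflects that we only need to peel off enough to get below the gapless threshold. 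The $\max$ of $cd$, $p+d-1$ and $p+q$ then collects, respectively: the contribution of isotypic modules with a full-size $Z_{q^n}$-orbit in $\hat A$ (weight-$0$ gapless monomials, bounded by $cd$); the contribution of gapless monomials that are completed to zero-sum by a short tail (bounded by $p+d-1$ via Lemma~\ref{lemma:easy}); and a residual small case ($p+q$) covering monomials built from few $A$-invariant variables plus one short brick-free piece.

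\textbf{Main obstacle.} The delicate point is the combinatorial accounting that turns the $\min\{\deg(m_{\mathcal{V}}),\deg(m)-p+1\}$ clause of Proposition~\ref{prop:gapless} together with $\mathcal{G}_{d+1}\subseteq\mathcal{B}$ into the precise three-way maximum, and in particular justifying the coefficient $(q^n-2)q$ rather than the naive $(q^n-q^{n-1})q$ — this requires arguing that whenever a gapless divisor is short, the complementary part either is itself swallowed by the brick ideal or contributes only $O(p)$ to the degree, so that the number of "free" $\F[R_{\le q}]$-factors one must reserve is $q^n-2$ and not more. Getting the off-by-small-constant in the coefficient exactly right, while keeping the three terms in the maximum sharp enough for the application to Theorem~\ref{thm:main}, is where the real work lies; the rest is a routine chaining of Lemma~\ref{lemma:induk1}, Proposition~\ref{trans} and Corollary~\ref{cor:G:H+1}.
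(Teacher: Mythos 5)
Your toolkit is the right one (Lemma~\ref{induk} / Lemma~\ref{lemma:induk1} over $H=A$, Proposition~\ref{prop:gapless}, Proposition~\ref{binom} and the brick ideal), but the key parameter choice is wrong and the resulting gap is not cosmetic. The paper applies Lemma~\ref{induk} with $r=1$, $s=q$, giving $\beta(L_+,R)\le (q^n-1)q+\max\{\beta(M,L^A),\,\beta(M,\F[I_{\le q}])-q\}$; here $\beta(M,L^A)\le\beta(L_+,\F[W]^A)\le p$ is harmless (it is absorbed by the $p+q$ term), and the $-q$ turns $(q^n-1)q$ into $(q^n-2)q$. Your choice $r=q^{n-1}$ instead puts $\beta_{q^{n-1}}(M,L^A)$ into the maximum, and you never control it: the only available bound is $\beta_{q^{n-1}}(L_+,\F[W]^A)\le q^{n-1}p$, which exceeds $\max\{cd,p+d-1,p+q\}$ in general (already for $G=Z_5\rtimes Z_4$ it would give at best $4+2p=14$ against the claimed $11$, and for $q=3$ one has $cd\approx \tfrac{2}{9}pq^n< pq^{n-1}$). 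Relatedly, you misidentify the obstacle: for $n\ge 2$ your coefficient $(q^n-q^{n-1})q$ is already $\le (q^n-2)q$, so no ``improvement'' of the coefficient is needed; the real work is proving $\beta(L_+/R_+L_+,\F[I_{\le q}])\le\max\{cd,p+d-1,p+q\}$, which the paper does by splitting $L_{\ge e}$ (with $e=\max\{cd+1,p+d,p+q+1\}$) into pieces $M^{(i)}$ according to which of the $c$ orbits $O_i$ carries more than $d$ of the weight (pigeonhole) or whether a weight-$0$ variable occurs, and then applying Proposition~\ref{prop:gapless} with $\mathcal{V}$ the variables of weight in $O_i$ together with $\mathcal{G}_{d+1}(\mathcal{V})\subseteq\mathcal{B}$. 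You gesture at this but leave it as ``routine''.

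There are also two factual errors about bricks that infect your bookkeeping. A brick is the orbit of a nontrivial character under the minimal nontrivial subgroup $Z_q\le Z_{q^n}$, which acts freely by \eqref{G/A-kikotes}, so bricks have degree $q$, not $q^{n-1}$; the number $q^{n-1}$ is the index of $Z_q$ in $Z_{q^n}$, i.e.\ the number of bricks inside one full $G/A$-orbit, and it is this index that plays the role of $s$ in Proposition~\ref{binom} (you assert $s=q$ there, yet write the $d$ that corresponds to $s=q^{n-1}$). The correct degree $q$ matters: it is exactly what lets $\mathcal{B}$ be absorbed into the algebra $\F[I_{\le q}]$, whereas bricks of degree $q^{n-1}>q$ (your version, for $n\ge 3$) would break that step, and your ``trading expensive generators of degree $q^{n-1}\ge q$'' argument for recovering the coefficient has no precise content. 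To repair the proof, take $r=1$ in Lemma~\ref{induk}, bound $\beta(M,L^A)$ by $p$, and carry out the orbit-wise pigeonhole argument above.
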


\begin{proof}  By Lemma~\ref{induk} (applied with $s=q$ and $r=1$) we have 
$\beta(L_+,R)\le (q^n-1)q+\max\{p,\beta(L_+/R_+L_+,S)-q\}$, where 
$S:=\field[I_{\le q}]$.  Apart from $O_0:=\{ 0 \}$,  $Z_p$ contains $c$ different $Z_{q^n}$-orbits  $O_1,\dots,O_c$, each of cardinality $ q^n$, and  
the bricks different  from $O_0$ are all of size  $q$. 
Thus $\beta(L_+/R_+L_+,S)\le\beta(L_+/L_+R_+,\mathcal{B})$, and it is sufficient to show that 
for $e:=\max\{cd+1,p+d,p+q+1\}$, $L_{\ge e}\subseteq L_+R_++\mathcal{B}$. 

Denote by $M^{(i)}$ (resp. $M^{(0)}$) the subspace of $L_{\ge e}$ spanned  by monomials $u$ with $|\Phi(u)^{O_i}|>d$ (resp. $|\Phi(u)^{O_0}|\ge 1$). Clearly $L_{\ge e}\subseteq \sum_{i=0}^cM^{(i)}$. The $A$-invariant variables are bricks, so $M^{(0)}\subseteq\mathcal{B}$. 
Apply Proposition~\ref{prop:gapless} with $\mathcal{V}$ the set of variables of weight in $O_i$ for some fixed $i\in\{1,\dots,c\}$. We obtain that the subspace  $M^{(i)}$ 
is contained in  $R_+L_++\mathcal{B}+\mathcal{G}_{d+1}(\mathcal{V})$. 
By Proposition~\ref{binom}, $\mathcal{G}_{d+1}(\mathcal{V})\subseteq\mathcal{B}$, showing that $M^{(i)}\subseteq R_+L_++\mathcal{B}$. This holds for all $i$, 
hence $L_{\ge e}\subseteq L_+R_++\mathcal{B}$. 
\end{proof}

For the rest of this section let $G$ be the non-abelian semidirect product $Z_p \rtimes Z_q$, where $p,q$ are odd primes and $q \mid p-1$. 
We set  $L:= \field[W]$, $I = \field[W]^{Z_p}$, $R = \field[W]^G$ for an arbitrary $G$-module $W$ and denote by $A$ the normal subgroup $Z_p$ in $G$. 
In this case the bricks are the monomials $m$ with $\Phi(m)=O_i$ for some $i=0,1,\dots,\frac{p-1}q$, 
so a brick is either an $A$-invariant variable or has degree $q$. Moreover, multiplying a gapless monomial by a brick we get a  gapless monomial. 
Thus in the statement of Proposition~\ref{prop:gapless} all the $b_i$ may be assumed to be $A$-invariant variables. 
We need the following  consequence of the Cauchy-Davenport Theorem (see Theorem 5.7.3 in \cite{geroldinger-halterkoch} for a more general statement): 
\begin{lemma} \label{Zp_corner} 
Let $S$ be a sequence over $Z_p$  with maximal multiplicity $h$. 
If $|S| \ge p$ then $S$ has  a zero-sum subsequence $T \subseteq S$  of  length $|T| \le h$. 
\end{lemma}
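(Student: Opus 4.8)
\textbf{Proof plan for Lemma~\ref{Zp_corner}.}
The plan is to argue by induction on $|S|$, peeling off a zero-sum subsequence of length at most $h$ whenever the length is at least $p$. The base case is $|S| = p$: here I would invoke the classical fact that every sequence over $Z_p$ of length $p$ has a zero-sum subsequence of length exactly $p$ if and only if it is (up to rescaling) the constant sequence $(a^p)$; more to the point, I want a \emph{short} zero-sum subsequence, so I will instead reason directly. Write $S = (s_1,\dots,s_p)$ and form the partial sums $0, s_1, s_1+s_2,\dots, s_1+\dots+s_p$, which are $p+1$ elements of $Z_p$; by pigeonhole two of them coincide, say $s_{i+1}+\dots+s_j = 0$, giving a zero-sum subsequence of length $j-i$. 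The catch is that $j - i$ need not be $\le h$. To fix this, I would choose the two coinciding partial sums so that the gap between their indices is as small as possible; if \emph{every} consecutive block summing to zero had length $> h$, I will derive a contradiction with the maximal multiplicity being $h$ — but this requires care because the ordering of $S$ is arbitrary, so I should first reorder $S$ cleverly (e.g.\ group equal elements together, or spread them out) before forming partial sums.

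The cleaner route, which I expect to be the one that works, is to use the Cauchy--Davenport / Erd\H{o}s--Ginzburg--Ziv circle of ideas via the set $\Sigma$. If the support of $S$ has size $\ge 2$, then by Lemma~\ref{lemma:easy} applied to a multiplicity-free subsequence $S'$ with $|S'| = |\supp(S)|$, together with the general Cauchy--Davenport bound on $\Sigma$ of longer sequences, one shows $0 \in \Sigma(T)$ for some short $T$. More precisely: if $S$ contains two distinct nonzero values $a \ne b$, consider how many copies of each we can afford; since $|S| \ge p$ and the maximal multiplicity is $h$, the support has size $\ge |S|/h \ge p/h$, so $S$ contains at least $p/h$ distinct nonzero elements. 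Then a multiplicity-free subsequence of length $\min\{p-1, |\supp(S)|\}$ already has $\Sigma$ of size $\ge \min\{p, |\supp(S)|+1\}$ by Lemma~\ref{lemma:easy}, and by adding one or two more well-chosen elements one forces $0 \in \Sigma$, yielding a zero-sum subsequence of length at most $|\supp(S)| + O(1) \le h + O(1)$ — and I would then have to tighten the $O(1)$ to get exactly $\le h$, which is the delicate point.

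The genuinely slick proof, and the one I would ultimately present, goes as follows. If some element $a$ occurs with multiplicity $h$ in $S$, and $h$ copies of $a$ sum to zero (i.e.\ $p \mid ha$), we are done with $T = (a^h)$. Otherwise, I claim one can still extract a short zero-sum: since $|S| \ge p$, by Cauchy--Davenport the set of all partial sums of \emph{all} subsequences is $\Sigma(S) = Z_p$, so in particular $0 \in \Sigma(S)$, i.e.\ $S$ has a zero-sum subsequence; among all nonempty zero-sum subsequences pick one, $T$, of minimal length, and suppose for contradiction $|T| > h$. Then $T$ itself is an irreducible zero-sum sequence (minimality), so $|T| \le \D(Z_p) = p$ by \eqref{eq:davenportbeta} and $\D(Z_p) = p$. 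A minimal zero-sum sequence over $Z_p$ of length $\ell$ is, up to rescaling by a unit, the constant sequence $(1^\ell)$ precisely when $\ell = p$; in general its structure is constrained, and the key point is that if $|T| > h$ then $T$ uses at least two distinct values (as no value appears more than $h$ times in $S \supseteq T$), and one can split $T$ into two nonempty pieces whose sums are $c$ and $-c$; replacing elements to shorten...

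\textbf{Main obstacle.} The crux — and where I expect to spend the real effort — is controlling the \emph{length} of the extracted zero-sum subsequence in terms of $h$ rather than merely $p$. Pigeonhole on partial sums gives a zero-sum block but of uncontrolled length; Cauchy--Davenport gives existence but not a length bound. The right mechanism is almost certainly the following: order $S$ so that equal elements are \emph{not} adjacent, i.e.\ realize $S$ as a sequence in which each "column" of the Young-diagram picture contributes at most one element to any window of $h$ consecutive terms; then among the $|S|+1 \ge p+1$ partial sums, two that agree must be within distance $h$ of each other, \emph{provided} no window of length $> h$ has all-distinct running sums — and arranging the ordering so that the first repeated partial sum occurs within $h$ steps is exactly where the bound $|S| \ge p$ and maximal multiplicity $h$ must be combined. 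I would structure the final proof around: (1) reorder $S$ round-robin through its distinct values; (2) take the shortest prefix-difference that vanishes mod $p$; (3) show via the round-robin property plus $|S| \ge p$ that this difference spans $\le h$ terms. Step (3) is the bottleneck, and the cleanest formalization likely invokes the Cauchy--Davenport theorem one more time to say that $h$ consecutive terms of the round-robin ordering, being $h$ distinct nonzero residues, already have $h+1$ partial sums hitting every residue once $h \ge $ the relevant threshold, forcing $0$ among consecutive-block sums of length $\le h$.
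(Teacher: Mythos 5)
Your proposal does not close the lemma: all three routes are left incomplete, and the step you yourself identify as the bottleneck --- getting the \emph{length} bound $|T|\le h$ --- is precisely what is never established. Concretely: in your second route the inequality ``$|\supp(S)|+O(1)\le h+O(1)$'' is false, since the support size is unrelated to (and typically far larger than) the maximal multiplicity; your third route breaks off in mid-argument at the point where the minimal zero-sum sequence $T$ with $|T|>h$ was to be shortened; and the mechanism you say you would ultimately build the proof on (round-robin ordering plus the shortest zero-sum block of \emph{consecutive} terms) fails outright. For example, over $Z_7$ take $S=(1,1,2,2,3,3,5)$, so $|S|=7=p$ and $h=2$; the round-robin ordering $1,2,3,5,1,2,3$ has partial sums $0,1,3,6,4,5,0,3$, so the shortest zero-sum consecutive block has length $5>h$, even though the required zero-sum subsequence $(2,5)$ of length $2$ exists --- it is simply not consecutive. (Also, $h$ consecutive terms of a round-robin ordering need not be distinct when the number of distinct values is smaller than $h$.) So no reordering-plus-window argument of the kind you describe can be the proof; the short zero-sum subsequence must be allowed to be non-consecutive.

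The paper itself gives no proof; it cites the lemma as a consequence of the Cauchy--Davenport theorem (Theorem 5.7.3 in \cite{geroldinger-halterkoch}). The missing idea is a one-step sumset argument over the row decomposition, which automatically yields the length control you were after: assume $0\notin S$ (otherwise $T=(0)$ works), and write $S=R_1R_2\cdots R_h$ with each $R_i$ multiplicity-free and nonempty, as in Section~\ref{sec:goebel}. Put $A_1:=R_1$ and $A_i:=R_i\cup\{0\}$ for $2\le i\le h$, viewed as subsets of $Z_p$. By Cauchy--Davenport, $|A_1+\cdots+A_h|\ge\min\{p,\sum_{i=1}^h|A_i|-(h-1)\}=\min\{p,|S|\}=p$, so $0\in A_1+\cdots+A_h$. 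Choose $a_i\in A_i$ with $a_1+\cdots+a_h=0$; then $a_1\neq 0$, and discarding the $a_i$ that equal $0$ leaves a nonempty zero-sum subsequence of $S$ using at most one element from each row (legitimate as a subsequence exactly because the rows encode multiplicities), hence of length at most $h$. This choice of at most one element per row is what replaces your attempts to control the length via orderings of $S$ or via the structure of minimal zero-sum sequences.
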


\begin{corollary} \label{cor:tuske} We have the inequality 
\[ \beta(L_+, R) \le p + \frac{q(q-1)^2}{2}. \]
\end{corollary}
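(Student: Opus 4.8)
The plan is to apply Lemma~\ref{induk} to the group $G = Z_p \rtimes Z_q$ with the subgroup $H = A = Z_p$, choosing the parameters $r$ and $s$ judiciously, and then to bound the two quantities $\beta_r(M, L^H)$ and $\beta(M, \F[L^H_{\le s}])$ appearing on the right-hand side using the structural results about gapless monomials and bricks from the previous subsection. Here $[G:H] = q$, so Lemma~\ref{induk} gives $\beta(L_+, L^G) \le (q-r)s + \max\{\beta_r(M, L^H), \beta(M, \F[L^H_{\le s}]) - s\}$ for any $1 \le r < q$ and $s \ge 1$. I expect the right choice is $r = 1$ and $s = q$, which produces a leading term $(q-1)q$; combined with the remaining $\max$ term this should assemble into $p + q(q-1)^2/2$ after noting that $(q-1)q + \text{(something of size about } q(q-1)/2 + \text{lower order)}$ works out.

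First I would unwind what $M = L_+/L_+^G L_+$ and $\beta(M, \F[L^H_{\le q}])$ mean: by Proposition~\ref{prop:gapless} (with $\mathcal{V}$ all variables), $M$ is spanned modulo $L_+ R_+$ by monomials that are products of $A$-invariant variables and bricks times a monomial $m$ whose $\mathcal{V}$-part has a gapless divisor of degree at least $\min\{\deg m_{\mathcal{V}}, \deg m - p + 1\}$; and since for $Z_p \rtimes Z_q$ the bricks are exactly the $A$-invariant variables and the degree-$q$ monomials with weight sequence a full $Z_q$-orbit, every brick lies in $\F[L^H_{\le q}]$ (degree $\le q$, $A$-invariant). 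So the generators of $M$ as an $\F[L^H_{\le q}]$-module can be taken to be gapless monomials, up to an error controlled by $p-1$. Then I would bound the degree of a gapless monomial that is not divisible by a brick: by the argument in the proof of Proposition~\ref{binom}, for a single orbit $O$ of size $q$ the row decomposition of a brick-free gapless monomial is a strictly decreasing sequence of row sizes bounded by $q - 1$ (the index of a minimal nontrivial subgroup, here all of $Z_q$, being $q$... wait, one must be careful: here the minimal nontrivial subgroup of $G/A = Z_q$ is $Z_q$ itself since $q$ is prime, so its index is $1$ and the "orbit size $- s$" bound is $q - 1$), giving degree at most $1 + 2 + \dots + (q-1) = q(q-1)/2$ per orbit. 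Summing the contributions from all orbits is where care is needed, but the factor structure of gapless monomials (each non-invariant variable weight lies in one of the $(p-1)/q$ orbits, and a gapless monomial restricted to each orbit is again gapless and brick-free) means a brick-free gapless monomial has degree at most $(p-1)/q \cdot q(q-1)/2$... which is too big. So I would instead use the finer fact that once a gapless monomial gets long enough it must contain a zero-sum subsequence of small length — this is exactly what Lemma~\ref{Zp_corner} is for: if the weight sequence has length $\ge p$ then there is a zero-sum subsequence of length at most the maximal multiplicity $h$, and for a brick-free gapless monomial $h \le$ (number of rows) which is at most $q-1$. Pulling out such a zero-sum subsequence of length $\le q-1$ gives a factorization into an $A$-invariant monomial (landing in $L^H_+$, a generator of degree $\le q-1 < q$, hence in $\F[L^H_{\le q}]$) times a shorter monomial, so inductively $\beta(M, \F[L^H_{\le q}]) \le p - 1 + (q-1)$ or thereabouts; and a parallel argument using the transfer (as in Lemma~\ref{goebel}/Proposition~\ref{prop:gapless}) handles $\beta_r(M, L^H) = \beta_1(M, L^H)$, which should be bounded by something like $p + q$.

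Assembling: with $r=1$, $s=q$ the bound reads $\beta(L_+, L^G) \le (q-1)q + \max\{\beta_1(M,L^H),\ \beta(M,\F[L^H_{\le q}]) - q\}$, and I would need $\max\{\beta_1(M,L^H), \beta(M,\F[L^H_{\le q}]) - q\} \le p + q(q-1)^2/2 - (q-1)q = p - q(q-1)/2$... hmm, that is negative for large $q$, so the bookkeeping above is not yet the right split. The actual successful route is almost certainly to iterate: rather than a single application, one peels off bricks of degree $q$ repeatedly, each peeling costing $q$ in degree but reducing the number of orbits that can still carry a "tall" gapless piece, so that the total cost is $\sum$ over the peelings, yielding the triangular number $q(q-1)/2$ multiplied by $q$ (the brick degree) plus the additive $p$ coming from the Cauchy–Davenport slack. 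Concretely I expect to apply Lemma~\ref{induk} (or Lemma~\ref{lemma:induk1}) with the gapless structure feeding $\beta(M, \F[L^H_{\le q}])$: a gapless monomial restricted to each of the at most $(q-1)$ "occupied" rows across all orbits contributes, and Proposition~\ref{binom}/Corollary~\ref{cor:skatulya} (in the $n=1$ case, $c = (p-1)/q$, $d = \binom{q}{2} = q(q-1)/2$) give exactly $\beta(L_+, R) \le (q-2)q + \max\{cd, p + d - 1, p + q\}$. The hard part is then purely arithmetic: showing $(q-2)q + \max\{\frac{(p-1)(q-1)}{2}, p + \frac{q(q-1)}{2} - 1, p+q\} \le p + \frac{q(q-1)^2}{2}$. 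The middle and last terms are clearly dominated; the genuine content is that the first term $cd = \frac{(p-1)(q-1)}{2}$ is also dominated, i.e. $(q-2)q + \frac{(p-1)(q-1)}{2} \le p + \frac{q(q-1)^2}{2}$, equivalently $\frac{(p-1)(q-1)}{2} - p \le \frac{q(q-1)^2}{2} - q(q-2) = \frac{q(q-1)^2 - 2q(q-2)}{2}$; expanding, $(p-1)(q-1) - 2p \le q(q-1)^2 - 2q(q-2)$, i.e. $p(q-3) - (q-1) \le q(q^2 - 4q + 5)$. Since $q \le p - 1$ wait no, $q \mid p - 1$ so $p \ge q + 1$, actually $p$ can be much larger than $q$; so for fixed $q \ge 5$ the left side grows linearly in $p$ while the right side is constant, and the inequality fails for large $p$! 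So $cd$ is NOT always dominated, and Corollary~\ref{cor:tuske} as stated with bound $p + q(q-1)^2/2$ cannot follow from Corollary~\ref{cor:skatulya} alone — one must instead use a sharper estimate of $\beta(L_+/R_+L_+, \F[I_{\le q}])$ that accounts for the $p$-dependence correctly, presumably via Lemma~\ref{Zp_corner} which caps the relevant degree at $p - 1 + \binom{q}{2}$ rather than $\frac{(p-1)(q-1)}{2}$. Concretely: a gapless, brick-free monomial of degree $\ge p$ has maximal multiplicity $h \le q - 1$ (strictly decreasing rows of total width $\le q - 1$ per orbit forces... actually $h$ = number of rows $\le q-1$), so Lemma~\ref{Zp_corner} extracts a zero-sum (hence $A$-invariant) subsequence of length $\le q - 1$, i.e. a generator of $M$ over $\F[I_{\le q}]$ of smaller degree, giving by induction $\beta(M, \F[I_{\le q}]) \le (p - 1) + (q-1) + (\text{adjustment})$. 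Feeding this into Lemma~\ref{induk} with $s = q$, $r = 1$: $\beta(L_+, R) \le (q-1)q + \max\{\beta_1(M, I), p - 1 + (q-1) - q + \dots\}$; after checking $\beta_1(M, I) \le p + q$ (one more transfer argument over $Z_p$), the dominant term is $(q-1)q + (p - 2 + \text{small})$, and I need this $\le p + \frac{q(q-1)^2}{2}$, i.e. $(q-1)q - 2 \le \frac{q(q-1)^2}{2}$, which holds for all $q \ge 2$ since $\frac{q(q-1)^2}{2} - q(q-1) = \frac{q(q-1)(q-3)}{2} \ge 0$ for $q \ge 3$ and one checks $q = 2$ directly (though $q$ odd here anyway). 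The main obstacle is getting the $p$-dependence right: one must not naïvely sum the $\binom{q}{2}$ triangular bound over all $(p-1)/q$ orbits, but rather invoke Lemma~\ref{Zp_corner} to see that a long gapless monomial already contains a short zero-sum piece, so that the "cost" attributable to $p$ is an additive $p-1$, not a multiplicative one. Everything else is routine bookkeeping with Lemma~\ref{induk}, Proposition~\ref{prop:gapless}, and Proposition~\ref{binom}.
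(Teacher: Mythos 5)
Your overall skeleton is the paper's (Lemma~\ref{induk} with $r=1$, Proposition~\ref{prop:gapless}, Proposition~\ref{binom}, and Lemma~\ref{Zp_corner} to make the $p$-dependence additive), and your diagnosis that one cannot simply sum the triangular bound over all $(p-1)/q$ orbits is exactly right. But your final assembly has a genuine gap: the claim that with $s=q$ one has $\beta(L_+/L_+R_+,\F[I_{\le q}])\le p+O(q)$. Your justification is that the relevant spanning monomials are gapless and brick-free, hence have height $h\le q-1$, so Lemma~\ref{Zp_corner} extracts a zero-sum divisor of length $\le q-1$. This conflates ``has a gapless divisor'' with ``is gapless'': Proposition~\ref{prop:gapless} only produces monomials $m$ (times $A$-invariant variables) whose $\mathcal{V}$-part has a gapless divisor of degree at least $\deg(m)-p+1$; the complementary factor, of degree up to $p-1$, is completely uncontrolled, so the height of the generator can exceed $q-1$ by as much as $p-1$, and Lemma~\ref{Zp_corner} then only yields a zero-sum divisor of length bounded by that larger height. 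Moreover, the natural dichotomy ``small height versus many letters in a single orbit'' does not close at threshold $q$: having more than $q$ letters of one orbit forces, via Proposition~\ref{prop:gapless} restricted to that orbit, only a gapless divisor of degree $>q$, and by Proposition~\ref{binom} a brick is forced only once that degree exceeds $\binom{q}{2}$. A red flag for your claimed bound is that it would give $\beta(Z_p\rtimes Z_q)\lesssim p+q^2$, far sharper than anything known and close to Pawale's open conjecture.

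The paper closes the argument by choosing $s=\binom{q}{2}$ (not $s=q$) in Lemma~\ref{induk}, so the leading term is $(q-1)\binom{q}{2}=\tfrac{q(q-1)^2}{2}$, and then proving $\beta(L_+/R_+L_+,\F[I_{\le s}])\le p+s$ by exactly the dichotomy you were reaching for, but at the correct threshold: a monomial of degree $\ge p+s$ either has a degree-$(p+s)$ divisor of height $\le s$, in which case Lemma~\ref{Zp_corner} gives a zero-sum divisor of length $\le s$, i.e.\ an element of $\F[I_{\le s}]_+$; or some single orbit $O_i$ carries more than $s$ of its letters, in which case Proposition~\ref{prop:gapless} with $\mathcal{V}$ the $O_i$-variables together with $\mathcal{G}_{s+1}(\mathcal{V})\subseteq\mathcal{B}$ (Proposition~\ref{binom}, since $s=\binom{q}{2}$) forces a brick, which again has degree $\le q\le s$. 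Combined with $\beta_1(L_+/R_+L_+,I)\le\beta(L_+,I)\le p$, the $\max$ in Lemma~\ref{induk} is $p$, giving $p+\tfrac{q(q-1)^2}{2}$. So the fix is not a sharper estimate at $s=q$ (which is unavailable) but the larger parameter $s=\binom{q}{2}$, which is precisely why the corollary's bound has the size it does.
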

\begin{proof} 
Applying Lemma~\ref{induk} with $r=1$ and $s:=\binom{q}{2}$, and using $\beta(L_+,I)\le p$ we get 
\[\beta(L_+,R)\le (q-1)s+\max\{p,\beta(L_+/R_+L_+,\field[I_{\le s}])-s\}\]
so our statement  follows from the inequality $\beta(L_+/R_+L_+,\field[I_{\le s}])\le p+s$. 

To prove the latter observe that if $h(m)>s$ for a monomial $m$, then $| \Phi(m)^O |>s$ for some $G/A$-orbit $O$ in $\hat A$. Therefore  
\begin{equation}\label{eq:N+M}L_{\ge p+s}=N+\sum_{i=0}^{p-1/q}M^{(i)}\end{equation} 
where $N$ is spanned by monomials having  a degree $p+s$ divisor $m$ with $h(m)\le s$, 
$M^{(0)}$ is spanned by monomials involving an $A$-invariant variable, and for $i=1,\dots,\frac{p-1}q$, $M^{(i)}$ is spanned by monomials having a divisor $m$ with 
$\deg(m)\ge p+s$ and $|\Phi(m)^{O_i}|>s$; here $O_1,\dots,O_{p-1/q}$ are the $q$-element $G$-orbits in $\hat A$. 
 
By  Lemma \ref{Zp_corner} the weight sequence $\Phi(m)$ of a monomial $m\in N$ contains a non-empty zero-sum sequence
of length at most $h(m)\le s$, hence $m\in \field[I_{\le s}]_+L_+$. Applying Proposition~\ref{prop:gapless} with $\mathcal{V}$ the variables with weight in $O_i$  
for a fixed $i\in\{1,\dots,\frac{p-1}q\}$, we get $M^{(i)}\subseteq L_+R_++\mathcal{G}_{s+1}(\mathcal{V})+M^{(0)}$,  
and by Proposition~\ref{binom} we have $\mathcal{G}_{s+1}(\mathcal{V})\subseteq \mathcal{B}$. Clearly $M^{(0)}\subseteq\mathcal{B}$. It follows by \eqref{eq:N+M} that 
$L_{\ge p+s}\subseteq R_+L_++\mathcal{B}+L_+\field[I_{\le s}]_+$, and since bricks have degree at most $q\le s$, the inequality 
$\beta(L_+/R_+L_+,\field[I_{\le s}])\le p+s$ is proven. 
\end{proof}

\begin{remark} \label{rem:skatulya}
The above results are getting close to the lower bound mentioned at the beginning of Chapter~\ref{ch:semidir}
only for small values of $q$: 
we have $ p+2 \le \beta(Z_p \rtimes Z_3) \le p+6$  by Corollary~\ref{cor:tuske}  
and $p+3 \le  \beta(Z_p \rtimes Z_4) \le p+6 $ by Corollary~\ref{cor:skatulya} (for the lower bounds see \cite{CzD:2}). 
In characteristic zero, the inequality $\beta(Z_p\rtimes Z_3)\le p+6$ was proved in \cite{pawale}. 
\end{remark}

\begin{proposition} \label{nagy}  We have 
 \[\mathcal{G}_d \subseteq (I_+)_{\le q}L
 \qquad \text{ if }\quad
 d \ge \min \{ p, \tfrac{1}{2}(p+q(q-2)) \}.\] 
\end{proposition}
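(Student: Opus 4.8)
The claim is about the ideal $\mathcal{G}_d$ generated by gapless monomials of degree at least $d$: when $d$ is at least $\min\{p, \frac12(p+q(q-2))\}$, every such monomial already lies in the ideal generated by $I$-invariants of degree at most $q$. The plan is to take a gapless monomial $m$ with $\deg(m) \ge d$ and produce inside it a $Z_p$-invariant (zero-sum) subsequence of length at most $q$. Since bricks (the degree-$q$ orbit monomials and the $A$-invariant variables) are themselves $A$-invariant of degree $\le q$, it suffices to handle gapless monomials not divisible by a brick, exactly as in the proof of Corollary~\ref{cor:tuske}.

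First I would analyze the row decomposition $\Phi(m) = R_1 R_2 \cdots R_h$ of a gapless monomial not divisible by a brick. Gaplessness together with ``not divisible by a brick'' forces, within each $G/A$-orbit $O$, the multiplicities to \emph{strictly} decrease down the rows until the orbit is exhausted; in particular $|R_1| \le p - 1$ counting across all orbits, and the shape within a single orbit of size $q$ is a strictly decreasing partition with parts $\le q-1$. The key quantitative point is: if $\deg(m) = |S|$ where $S = \Phi(m)$ has maximal multiplicity $h = h(m)$, then either $|S| \ge p$, in which case Lemma~\ref{Zp_corner} gives a zero-sum subsequence of length $\le h$, or else $|S| < p$ and I need the stronger bound coming from $\frac12(p+q(q-2))$. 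I would argue that for a gapless brick-free $m$ the height $h$ is constrained: the support $R_1$ lives in at most $(p-1)/q$ orbits of size $q$ plus the possibility that some orbits are only partially filled, and since rows strictly shrink within each orbit, a large degree forces a large support, hence few rows, i.e. small $h$. Quantitatively, if $h = h(m)$ then $\deg(m) \le \sum_{i=1}^{h} |R_i|$ with each $|R_i| \le |R_1| - (\text{number of orbits whose intersection with } R_1 \text{ has size} < \text{something})$... — the honest statement I want is a lower bound on $|R_1|$ (equivalently $\deg \supp(m)$) in terms of $\deg(m)$ and $h$, so that I can then apply Lemma~\ref{Zp_corner} or Lemma~\ref{lemma:easy} to extract a short zero-sum subsequence and conclude $m \in (I_+)_{\le q} L$.

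The cleanest route is probably: let $h = h(m)$. By Lemma~\ref{Zp_corner}, if $|\Phi(m)| \ge p$ then $\Phi(m)$ has a zero-sum subsequence of length $\le h$; if additionally $h \le q$ we are done. So the remaining case is $h \ge q+1$ (when $\deg m \ge p$) or $\deg m < p$. When $h \ge q+1$: since within each size-$q$ orbit the rows strictly decrease and there are only $(p-1)/q$ such orbits, having $h$ rows with the $(q+1)$-st row still nonempty means the monomial contains a full orbit (a brick) — contradiction — unless that deep row meets the orbit $O_0 = \{0\}$, i.e. $m$ has an $A$-invariant variable, which is again a brick. Hence for brick-free $m$ we automatically get $h \le q$, and the case $\deg m \ge p$ is settled. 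The genuinely delicate case is $\frac12(p+q(q-2)) \le \deg(m) < p$: here I would bound $\deg(m) \le \sum_{\text{orbits } O} \binom{|R_1 \cap O|+1}{2}$ by Proposition~\ref{binom}-type reasoning (strictly decreasing rows in each orbit), with $|R_1 \cap O| \le q-1$ for the size-$q$ orbits, and count how many orbits can be "tall"; combining this with $\deg(m) \ge \frac12(p+q(q-2)) = \frac12 p + \binom{q-1}{2} + \frac12(q-1)$ should force the support to be large enough — occupying essentially all $(p-1)/q$ orbits plus enough in one orbit — that the first row $R_1$ together with a short completion inside the orbit structure yields, via Lemma~\ref{lemma:easy} applied to the complementary part, a zero-sum subsequence of length $\le q$.

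\textbf{Main obstacle.} The hard part will be the case $\deg(m) < p$, where Lemma~\ref{Zp_corner} does not directly apply and one must squeeze a zero-sum subsequence of length \emph{exactly} at most $q$ out of the gapless structure using the precise threshold $\frac12(p+q(q-2))$; getting the arithmetic of the row-decomposition bound $\deg(m) \le \sum_O \binom{|R_1\cap O|+1}{2}$ to interact correctly with the Cauchy--Davenport estimate of Lemma~\ref{lemma:easy} (to produce a completion of length $\le q - (\text{something already present})$) is the crux, and is presumably why the bound is stated as a minimum of two expressions. I expect the argument to split cleanly along $p \lessgtr \frac12(p+q(q-2))$, i.e. $p \lessgtr q(q-2)$, with Lemma~\ref{Zp_corner} covering one regime and the gapless-shape count covering the other.
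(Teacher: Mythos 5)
Your handling of the regime $\deg(m)\ge p$ is correct and is essentially what the paper does: for a gapless monomial $m$ with no brick divisor each orbit-support $S_i$ has at most $q-1$ elements and the rows strictly decrease inside each orbit, so $h(m)\le q-1$, and Lemma~\ref{Zp_corner} then yields an $A$-invariant divisor of degree at most $q-1$, settling that case. The genuine gap is exactly the case $\tfrac12(p+q(q-2))\le\deg(m)<p$, which you flag as the main obstacle but do not close, and the tools you propose there cannot close it. Lemma~\ref{lemma:easy} (plain Cauchy--Davenport for subset sums) gives no control whatsoever on the \emph{length} of the subsequence realizing a prescribed sum, which is the whole difficulty; moreover in this regime the ``complementary part'' of $m$ has fewer than $p-1$ letters, so you cannot even guarantee that its subset sums cover all of $Z_p$. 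Shape counting in the spirit of Proposition~\ref{binom} likewise does not by itself produce a zero-sum subsequence of length at most $q$.

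The missing ingredient in the paper's proof is the Dias da Silva--Hamidoune theorem on restricted sumsets: writing $q^{\wedge}T$ for the set of sums of $q$ pairwise distinct elements of $T$, one has $|q^{\wedge}T|\ge\min\{p,\,q|T|-q^2+1\}$. Let $S$ denote the support of $\Phi(m)$, so $0\notin S$ and $|S_i|\le q-1$ for every orbit. If $|S|\ge q+\tfrac{p-1}{q}-1$, then $|q^{\wedge}(S\cup\{0\})|=p$, so some $q$ pairwise distinct elements of $S\cup\{0\}$ sum to zero; discarding $0$ if it occurs, $m$ acquires an $A$-invariant divisor of degree $q$ or $q-1$, contradicting the standing assumption. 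Hence $|S|\le q+\tfrac{p-1}{q}-2$, and now gaplessness gives $\deg(m_i)\le\binom{|S_i|+1}{2}\le\tfrac{q}{2}|S_i|$, whence $\deg(m)\le\tfrac{q}{2}|S|\le\tfrac12(p+q(q-2))-1$. So the threshold $\tfrac12(p+q(q-2))$ arises as an upper bound on the degree of gapless monomials having no nontrivial $A$-invariant divisor of degree at most $q$ (small support forced by the restricted-sumset theorem, then the quadratic gapless estimate), rather than from directly extracting a short zero-sum out of a large monomial; the length control your sketch needs is precisely what Erd\H{o}s--Heilbronn-type restricted addition provides and ordinary Cauchy--Davenport does not.
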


\begin{proof}
Suppose that $m$ is a gapless monomial having no non-trivial $A$-invariant divisor of degree at most $q$ 
(hence $m$ is not divisible by a brick). 
In particular $m$ has no variables of weight $0$. 
Let $m=m_1...m_{p-1/q}$ be the factorization where $\Phi(m_i)=\Phi(m)^{O_i}$,  
and let $S_i$ denote the support of the weight sequence $\Phi(m_i)$. 
By our assumption $0 \not\in S:=\bigcup_j S_j$ and $|S_i| \le q-1$ for every $i$. 

For each factor $m_i$ we have $h(m_i) \le |S_i| \le q-1$, so if $\deg(m) \ge p$ then
$m$ contains an $A$-invariant divisor of degree at most $h(m) \le q-1$ by Lemma~\ref{Zp_corner},
which is a contradiction, hence $\deg(m) \le p-1$. 
On the other hand, 
as each factor $m_i$ is gapless, $\deg(m_i)\leq \binom{|S_i|+1}{2}\leq 
\frac{|S_i|q}2$, and consequently 
\begin{align} \label{sq/2} 
\deg(m) \le \frac{|S|q}{2}. 
\end{align}

We  claim that $|S| \le q +\frac{p-1}q -2$. Write $q^\wedge T:=\{t_1+\dots +t_q\mid t_i\neq t_j\in T\}$
for any subset $T \subseteq \hat A$.  The Dias da Silva - Hamidoune theorem (see \cite{dasilva}) states that
$ |q^{ \wedge} T| \ge \min \{p, q|T|- q^2 + 1 \}$.
Now if our claim were false then we would get from  this theorem that 
\[ |q^\wedge (S \dot\cup \{ 0\}) | \ge \min\{p,q(|S|+1) - q^2 + 1 \}=p\]
implying that $m$ contains an $A$-invariant divisor of degree $q$ or $q-1$, again a contradiction.
By plugging in this upper bound on $|S|$ in \eqref{sq/2} and since $q$ is odd we get 
$ \deg(m) \le\lfloor \frac{q^2-2q+p-1}{2}\rfloor 
=\tfrac{1}{2}(p+q(q-2)) -1$. 
\end{proof}

\begin{proposition} \label{kicsi}
Suppose  $c, e$ are positive integers  such that $c\leq q$ and  
$ \binom{c}{2} < p \le \binom{c+1}{2} - \binom{e+1}{2} $ 
(in particular, this forces that $p<\binom{q+1}2$). 
Then
\[ \mathcal{G}_ d \subseteq (I_+)_{\le c-e} L \qquad \text{ if } \qquad d \ge p+ \binom{e}{2}.\]  
\end{proposition}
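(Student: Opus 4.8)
\textbf{Proof proposal for Proposition~\ref{kicsi}.}

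The plan is to mimic the proof of Proposition~\ref{nagy}, where the two crucial ingredients were: (a) a gapless monomial over a single orbit $O_i$ has degree at most $\binom{|S_i|+1}{2}$, where $S_i$ is the support of its weight sequence; and (b) the Dias da Silva–Hamidoune theorem forces an $A$-invariant subsequence of controlled length once the combined support is large. Here, instead of aiming to produce an $A$-invariant divisor of degree $q$ or $q-1$, we only want one of degree at most $c-e$, so the arithmetic must be re-balanced against the sharper hypothesis $\binom{c}{2}<p\le\binom{c+1}{2}-\binom{e+1}{2}$.

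First I would take a gapless monomial $m$ with no non-trivial $A$-invariant divisor of degree at most $c-e$; in particular $m$ is not divisible by a brick and has no variable of weight $0$. Factor $m=m_1\cdots m_{(p-1)/q}$ with $\Phi(m_i)=\Phi(m)^{O_i}$ and let $S_i=\supp(\Phi(m_i))$, $S=\bigcup_i S_i$. Since each $m_i$ is gapless with support $S_i$, its row decomposition has strictly decreasing row lengths, so $\deg(m_i)\le\binom{|S_i|+1}{2}$; summing and using that the $S_i$ are disjoint subsets of $\hat A\setminus\{0\}$, and that $\binom{a+1}{2}+\binom{b+1}{2}\le\binom{a+b+1}{2}$, I get $\deg(m)\le\binom{|S|+1}{2}$. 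Next, if $|S|\ge c$ then $\binom{|S|+1}{2}\ge\binom{c+1}{2}>p$, and I would want to invoke Lemma~\ref{Zp_corner} to extract a short zero-sum (i.e.\ $A$-invariant) subsequence: since $h(m)\le\max_i|S_i|\le|S|$... but that bound is too weak. The right move is the $\wedge$-refinement as in Proposition~\ref{nagy}: apply the Dias da Silva–Hamidoune bound $|t^\wedge T|\ge\min\{p,t|T|-t^2+1\}$ to $T=S\,\dot\cup\,\{0\}$ with an appropriate $t$ in the range $c-e$ down to something, to conclude that some sum of $c-e$ (or fewer) distinct elements of $S\cup\{0\}$ vanishes, giving the forbidden $A$-invariant divisor. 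So I would argue $|S|\le c-1$: otherwise choose $t:=c-e$ (checking $t\le q$, which follows from $c\le q$), and then $t|T|-t^2+1\ge (c-e)\cdot c-(c-e)^2+1=(c-e)e+1$; I need this $\ge p$, which should follow from $p\le\binom{c+1}{2}-\binom{e+1}{2}$ after expanding — this is the key inequality to verify.

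Having forced $|S|\le c-1$, I then get $\deg(m)\le\binom{|S|+1}{2}\le\binom{c}{2}<p$, but I need the sharper bound $\deg(m)\le p+\binom{e}{2}-1$ to conclude $\mathcal{G}_d\subseteq (I_+)_{\le c-e}L$ for $d\ge p+\binom{e}{2}$. Since $\binom{c}{2}<p$, actually $\deg(m)<p\le p+\binom{e}{2}$, so $\deg(m)\le p+\binom{e}{2}-1$ holds automatically. This means any monomial of degree $\ge p+\binom{e}{2}$ with a gapless divisor of that degree composed of non-zero-weight variables would have to have that divisor fail the "no $A$-invariant divisor of degree $\le c-e$" condition, i.e.\ it lies in $(I_+)_{\le c-e}L$; and monomials whose gapless part does involve a weight-$0$ variable or a brick trivially lie in $(I_+)_{\le c-e}L$ too (an $A$-invariant variable or a brick of size $q$ — here one must double-check $q\le c-e$ is \emph{not} needed because bricks of the $A$-invariant-variable type have degree $1\le c-e$, while the degree-$q$ bricks: actually this needs care, and I would handle it by noting that a gapless monomial of large degree divisible by a brick can have the brick stripped off while staying gapless and large, reducing to the brick-free case as in Proposition~\ref{nagy}).

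The main obstacle I anticipate is the precise bookkeeping in the Dias da Silva–Hamidoune step: choosing the optimal value of $t$ (it may not simply be $c-e$; one might need to run $t$ over a range and take the best bound, as the hypothesis couples $c$ and $e$ quadratically) and then verifying that the resulting threshold $t|T|-t^2+1$ genuinely exceeds $p$ under the stated hypothesis $p\le\binom{c+1}{2}-\binom{e+1}{2}$. Expanding, $\binom{c+1}{2}-\binom{e+1}{2}=\frac{1}{2}(c^2+c-e^2-e)=\frac{1}{2}(c-e)(c+e+1)$, and one wants this to be at most $(c-e)e+1$ or a similar quantity for the chosen $t$ — so the combinatorial optimization of $t$ against this factorization is where the real work lies. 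Everything else is a routine adaptation of the argument already carried out for Proposition~\ref{nagy}.
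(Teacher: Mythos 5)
There is a genuine gap, and it is exactly at the step you flagged. Your plan is to force $|S|\le c-1$ via Dias da Silva--Hamidoune and then bound $\deg(m)\le\binom{|S|+1}{2}$, but the required inequality fails in the very range the proposition is designed for. Take the paper's own illustrative case $p=11$, $q=5$, $c=5$, $e=2$ (so $\binom{c}{2}=10<11\le\binom{c+1}{2}-\binom{e+1}{2}=12$ and $c-e=3$): with $t=c-e=3$ and $|T|\ge c+1=6$ the threshold is $t|T|-t^2+1=10<p$, and since $t\mapsto t\cdot 6-t^2+1$ is increasing for $t\le 3$, no admissible choice of $t\le c-e$ reaches $p$ either. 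So from $|S|\ge c$ you cannot conclude the existence of a zero-sum of length $\le c-e$, and the claim $|S|\le c-1$ is not established. Worse, even the bound on $|S|$ that DdSH does give (roughly $|S|<\frac{p-1}{c-e}+(c-e)-1$, i.e.\ $|S|\le 5$ in the example) combined with $\deg(m)\le\binom{|S|+1}{2}$ only yields $\deg(m)\le 15$, well above the needed $p+\binom{e}{2}-1=11$. The support-counting strategy inherited from Proposition~\ref{nagy} is simply too lossy here: it ignores the multiplicities, which is where the extra strength of the hypothesis $p\le\binom{c+1}{2}-\binom{e+1}{2}$ has to be spent.

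The paper's argument uses a different decomposition, by rows rather than by orbits. Write the row decomposition $\Phi(m)=S_1\cdots S_h$ and split it as $E:=S_1\cdots S_{c-e}$, $F:=S_{c-e+1}\cdots S_h$. Since $E$ has height at most $c-e$, Lemma~\ref{Zp_corner} (not DdSH) gives $|E|\le p-1$, or else $m$ would have an $A$-invariant divisor of degree $\le c-e$. Next, if $|S_{c-e}|\ge e+1$, then gaplessness together with $c\le q$ forces $|S_j|\ge e+(c-e-j+1)$ for $j\le c-e$, hence $|E|\ge(e+1)+\cdots+(e+(c-e))=\binom{c+1}{2}-\binom{e+1}{2}\ge p$, a contradiction; this is the only place the hypothesis on $p$ enters. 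Therefore $|S_{c-e}|\le e$, gaplessness then gives $|S_{c-e+1}|\le e-1$ and $|F|\le\binom{e}{2}$, so $\deg(m)=|E|+|F|\le p-1+\binom{e}{2}$, which is the assertion. Note also that your closing discussion of bricks is not needed in this argument (a degree-$q$ brick is not excluded by the hypothesis ``no $A$-invariant divisor of degree $\le c-e$'', but the row-based proof never requires brick-freeness), and that your intermediate conclusion ``$\deg(m)<p$'' should itself have been a warning sign, since it would prove a strictly stronger statement than Proposition~\ref{kicsi} claims.
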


\begin{proof}
Suppose that $m$ is a gapless monomial having  no non-trivial $A$-invariant divisor of degree at most $c-e$. 
Take  the row-decomposition  $\Phi(m)=S_1\cdots S_h$ and set $E:=S_1\cdots S_{c-e}$, $F:=S_{c-e+1}\cdots S_h$.  
We have $|E| \le p-1$, for otherwise by Lemma~\ref{Zp_corner}
we would get an $A$-invariant divisor of degree at most $c-e$. 
It follows that $|S_{c-e}| \le e$, for otherwise the fact that $m$ is gapless and $c\le q$ would 
lead to the contradiction 
\[ |E| \ge (e+1) + (e+2) + ... + (e+(c-e)) = \textstyle\binom{c+1}{2} - \binom{e+1}{2} \ge p.\] 
As a result $|S_{c-e+1}| \le e-1$, hence $|F| \le \binom{e}{2}$ since $m$ is gapless. 
But then $\deg(m) =|E|+|F| \le p-1 + \binom{e}{2}$, and this proves our claim. 
\end{proof}

To illustrate the use of  Proposition~\ref{kicsi} consider the case when $p=11$ and $q=5$. 
We then have $c=5$ and $e=2$, hence any gapless monomial of degree at least $12$ 
contains an $A$-invariant of degree at most $3$. On the other hand 
 $I_{\ge 22}\subseteq I_+R_++(\mathcal{G}_{12}\cap I_{\ge 22})\subseteq I_+R_++(I_+)_{\le 3}I_{\ge 19}$
 by Proposition~\ref{prop:gapless}, 
hence $I_{\ge 28} \subseteq I_+^3I_{\ge 19} +I_+R_+$. 
Furthermore  $I_{\ge 19}\subseteq I_+R_++(\mathcal{G}_9\cap I_{\ge 19})$ by Proposition~\ref{prop:gapless}. 
 A monomial $m\in \mathcal{G}_9\cap I_{\ge 19}$ 
has  a gapless divisor $u$ of degree at least $9$. 
It is easily seen that $h(u) \le 3$, hence $u$ can be completed to a monomial $v \mid m$ of degree $11$ and height $h(v) \le 5$, 
which will contain an $A$-invariant divisor of degree at most $5$ by Lemma~\ref{Zp_corner}. We get that  $I_{\ge 19} \subseteq (I_+)_{\le 5}I_{\ge 14} + I_+R_+$. 
Finally $I_{\ge 14} \subseteq I_+^2$ and putting all these together yields 
 $I_{\ge 28} \subseteq I_+^6 +I_+R_+\subseteq I_+R_+$ by Proposition~\ref{prop:knopeta}. As a result
\begin{align} \label{betaZ11Z5}
\beta(Z_{11} \rtimes Z_5) \le 27.
\end{align}

\begin{proposition}\label{ZpZq_estimates}
For any odd primes $p,q$ such that $q \mid p-1$ we have the following estimates:
\[ \beta(L_+,R)  \le
\begin{cases}
\frac{3}{2} (p + (q-2)q )-2  & \text{ if } p>q(q-2); \\
2p + (q-2)q -2 & \text{ if } p < q(q-2); \\
2p + (q-2)(c-1) -2 & \text{ if }  c(c-1) < 2p < c(c+1), c \le q.
\end{cases}
\]
\end{proposition}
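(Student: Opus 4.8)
The plan is to feed each of the divisibility statements proved in this section into Lemma~\ref{induk}, applied with $H:=A=Z_p$, so that $[G:H]=q$, $L^H=I$ and $L^G=R$. Put $M:=L_+/R_+L_+$, a graded $I$-module quotient of $L_+$. With $r:=1$, Lemma~\ref{induk} gives, for every $s\ge 1$,
\[\beta(L_+,R)\;\le\;(q-1)s+\max\{\,\beta(M,I),\ \beta(M,\F[I_{\le s}])-s\,\}.\]
Because every sequence over $\hat A\cong Z_p$ of length at least $p$ has a non-empty zero-sum subsequence ($\D(Z_p)=p$), every monomial of degree $\ge p$ has a non-trivial $A$-invariant divisor, so $\beta(M,I)\le\beta(L_+,I)\le p$. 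Thus everything comes down to proving an estimate of the form $\beta(M,\F[I_{\le s}])\le p+T-2$ for a suitable ``gapless threshold'' $T$ and a suitable $s$ with $T\ge s+2$; granting this, the maximum above equals $\beta(M,\F[I_{\le s}])-s$ and $\beta(L_+,R)\le (q-2)s+p+T-2$.

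To bound $\beta(M,\F[I_{\le s}])$ I would run Goebel's extended algorithm, Proposition~\ref{prop:gapless}, with $\mathcal{V}$ the full set of variables. For $G=Z_p\rtimes Z_q$ a brick is either an $A$-invariant variable or a degree-$q$ monomial whose weight sequence is a full $G/A$-orbit, and the product of a gapless monomial with a brick is again gapless; hence the conclusion of Proposition~\ref{prop:gapless} can be sharpened so that, modulo $R_+L_+$, the module $M$ is spanned by monomials $x_{i_1}\cdots x_{i_r}m'$ with each $x_{i_j}$ an $A$-invariant variable and $m'$ carrying a gapless divisor of degree at least $\min\{\deg m',\deg m'-p+1\}$. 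Since an $A$-invariant variable has degree $1\le s$, the monomials with $r\ge 1$ vanish in $M/\F[I_{\le s}]_+M$; and if $\deg m'\ge p-1+T$, then $m'$ has a gapless divisor of degree $\ge\deg m'-p+1\ge T$, i.e.\ $m'\in\mathcal{G}_T$, so by the relevant proposition $m'$ has a non-trivial $A$-invariant divisor of degree $\le s$ and therefore lies in $\F[I_{\le s}]_+L$. Hence $M/\F[I_{\le s}]_+M$ vanishes in degrees $\ge p-1+T$, i.e.\ $\beta(M,\F[I_{\le s}])\le p+T-2$.

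The three cases arise from the three available thresholds. If $p>q(q-2)$, Proposition~\ref{nagy} furnishes $T=\tfrac12(p+q(q-2))$ with $A$-invariant divisors of degree $\le q$, so $s:=q$ is admissible and $(q-2)q+p+\tfrac12(p+q(q-2))-2=\tfrac32(p+q(q-2))-2$. If $p<q(q-2)$, the other branch of Proposition~\ref{nagy} gives $T=p$, again with $s:=q$, yielding $2p+q(q-2)-2$. Finally, if $\binom{c}{2}<p<\binom{c+1}{2}$ with $c\le q$, then Proposition~\ref{kicsi} with $e=1$ applies (as $p\le\binom{c+1}{2}-\binom{2}{2}$), giving $T=p+\binom{1}{2}=p$ with $A$-invariant divisors of degree $\le c-1$; the hypothesis $2p<c(c+1)$ forces $c\ge 2$, so $s:=c-1\ge 1$ is admissible and we get $2p+(q-2)(c-1)-2$. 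In each case the inequality $T\ge s+2$ needed above reduces to an elementary inequality implied by the hypotheses on $p,q$ (the tightest instance being $q=3$, $p=7$), and one also checks there that $\beta(M,I)$ does not dominate the maximum.

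The step I expect to be the main obstacle is the sharpening of Proposition~\ref{prop:gapless} so that all the $b_i$ may be taken to be $A$-invariant variables: this is precisely what allows $s$ to be as small as $c-1<q$ in the third case, since otherwise the degree-$q$ bricks would fail to lie in $\F[I_{\le s}]$. It rests on the special structure of $Z_p\rtimes Z_q$ with $q$ prime (there is a single non-trivial $G/A$-orbit size, namely $q$) together with the closure of gaplessness under multiplication by bricks. The remainder is bookkeeping: tracking the constants so that the ``$-2$'' emerges in each bound, and verifying the elementary inequalities $T\ge s+2$ under the stated hypotheses.
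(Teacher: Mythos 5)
Your proposal is correct and takes essentially the same route as the paper's own proof: Lemma~\ref{induk} with $r=1$ (and $s=q$ or $s=c-1$), the bound $\beta(L_+,I)\le p$ for the first term in the maximum, Proposition~\ref{prop:gapless} to reduce to gapless monomials, and Propositions~\ref{nagy} and~\ref{kicsi} (with $e=1$) supplying the thresholds, yielding $\beta(L_+/R_+L_+,\F[I_{\le s}])\le p+T-2$ exactly as in the paper. The sharpening you single out as the main obstacle |that all the $b_i$ in Proposition~\ref{prop:gapless} may be taken to be $A$-invariant variables| is already recorded in the paper just before Lemma~\ref{Zp_corner}, with the same justification you give (gaplessness is preserved under multiplication by bricks).
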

\begin{proof}
Let $d$ be a positive integer such that $\mathcal{G}_d \subseteq (I_+)_{\le q}I$.
Given that $\mathcal{B} \subseteq (I_+)_{\le q}I$,
we get  $\beta(L_+/R_+L_+,\field[I_{\le q}]) \le p+d-2$ by Proposition~\ref{prop:gapless}.
Using Lemma~\ref{induk} it follows $\beta(L_+,R) \le (q-2)q+ p+d-2$.
Our first two estimates follow by substituting the value of $d$ given in Proposition~\ref{nagy}.
The last one follows similarly by deducing from Proposition~\ref{kicsi} that 
$\beta(L_+/R_+L_+,\field[I_{\le c-1}]) \le 2p -2$,
and then applying Lemma~\ref{induk}.
\end{proof}

\begin{theorem} \label{thm:zpzq}
For the non-abelian semidirect product $Z_p\rtimes Z_q$, where $p,q$ are odd primes we have 
$\beta(Z_p \rtimes Z_q) < \tfrac{pq}{2}$. 
\end{theorem}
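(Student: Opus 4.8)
The goal is to show $\beta(Z_p\rtimes Z_q)<\frac{pq}{2}$ for all odd primes $p,q$ with $q\mid p-1$. The inequality $\beta(Z_p\rtimes Z_q)=\beta(L_+,R)$ follows from Lemma~\ref{goebel} (applied with $A=Z_p$, or directly from the definition), so it suffices to bound $\beta(L_+,R)$ by the three cases of Proposition~\ref{ZpZq_estimates} together with the small-$q$ estimates Corollary~\ref{cor:tuske} and \eqref{betaZ11Z5}. The strategy is simply to verify, in each regime of the pair $(p,q)$, that the relevant upper bound is strictly less than $\frac{pq}{2}$; this is a finite case check plus some elementary algebra in the three infinite families.

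\textbf{First} I would dispose of the genuinely small cases. Since $q\ge 3$ and $q\mid p-1$, the smallest instances are $q=3$ (any $p\equiv 1\pmod 3$) and $q=5$ with $p=11$. For $q=3$ Corollary~\ref{cor:tuske} gives $\beta\le p+6$, and $p+6<\frac{3p}{2}$ exactly when $p>12$, i.e. for all $p\ge 13$; the only remaining prime is $p=7$, where $\beta(Z_7\rtimes Z_3)\le 13<\frac{21}{2}$ still holds since $13<10.5$ is false — so here I would instead invoke the sharper bound (or note $p+6=13$ vs $\frac{pq}{2}=10.5$ fails, meaning $q=3,p=7$ must be handled by a separate argument or a better estimate). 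This is the kind of boundary subtlety that needs care: I expect the clean statement to rely on the fact that for $q=3$ one has the improved bound from \cite{pawale}, or that $Z_7\rtimes Z_3$ is small enough to be treated by hand. Similarly $p=11,q=5$ is covered by \eqref{betaZ11Z5}: $27<\frac{55}{2}=27.5$, which just barely works.

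\textbf{Next}, for the three infinite regimes I would compare the Proposition~\ref{ZpZq_estimates} bounds against $\frac{pq}{2}$ directly. In the range $p>q(q-2)$ the bound is $\frac32(p+q(q-2))-2$; writing $q(q-2)<p$ gives $\frac32(p+q(q-2))-2<\frac32\cdot 2p-2=3p-2$, and $3p-2<\frac{pq}{2}$ for $q\ge 7$; the cases $q=3,5$ fall into the already-handled small regime or are checked directly. In the range $p<q(q-2)$ the bound is $2p+q(q-2)-2$; here $p<q(q-2)$ forces $p\le q^2-2q-1$, so $q>\sqrt p$ roughly, and one shows $2p+q(q-2)-2<\frac{pq}{2}$ using $q\mid p-1$ hence $q\le p-1$. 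The third range, governed by $c$ with $c(c-1)<2p<c(c+1)$ and $c\le q$, gives $2p+(q-2)(c-1)-2$; since $c\le q$ this is at most $2p+(q-2)(q-1)-2$, and one runs a similar comparison, using that $2p>c(c-1)$ means $c<\sqrt{2p}+1$.

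\textbf{The main obstacle.} The delicate point is the genuinely small pairs, where the general estimates leave almost no slack (the $p=11,q=5$ case clears $\frac{pq}{2}$ by only $0.5$) or fail outright (the $p=7,q=3$ case, where $p+6=13>10.5$). The proof must therefore either cite the tighter characteristic-zero bound $\beta(Z_p\rtimes Z_3)\le p+6$ in a context where it still beats $\frac{pq}{2}$ — which it does not for $p=7$ — or supply a direct computation for $Z_7\rtimes Z_3$ (order $21$), presumably showing $\beta\le 10$ or even invoking that this group has a cyclic subgroup structure handled elsewhere. I would structure the write-up so that all $q\ge 7$ and $p$ large follow uniformly from Proposition~\ref{ZpZq_estimates}, then enumerate the finitely many remaining $(p,q)$ with $q\in\{3,5\}$ and $p$ small, treating each by the best available bound; the arithmetic is routine once the case division is set up correctly, so the real work is choosing the division cleanly.
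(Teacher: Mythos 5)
Your overall plan (combine Proposition~\ref{ZpZq_estimates} with Corollary~\ref{cor:tuske}, the bound \eqref{betaZ11Z5}, and a special treatment of the smallest cases) is the same as the paper's, but two of your steps do not go through as described. The more serious one is the regime $p<q(q-2)$: your claim that $2p+q(q-2)-2<\tfrac{pq}{2}$ can be shown ``using $q\mid p-1$ hence $q\le p-1$'' is false exactly on the critical subfamily $p=2q+1$, which lies in that regime for every $q\ge 5$. For instance $(p,q)=(23,11)$ gives $2p+q(q-2)-2=143$ while $\tfrac{pq}{2}=126.5$; the second estimate simply does not beat $\tfrac{pq}{2}$ there. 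These cases must be handled by the third estimate, and your weakening ``$c\le q$, so the bound is at most $2p+(q-2)(q-1)-2$'' is useless for them (for $(23,11)$ it gives $134>126.5$); what is needed is the opposite-direction inequality $c-1<2p/c$ together with a \emph{lower} bound on $c$. The paper's route avoids your regime split altogether: since for $p<q(q-2)$ the second estimate is smaller than $\tfrac32(p+q(q-2))-2$, that formula bounds $\beta$ in both regimes, so assuming indirectly $\beta\ge\tfrac{pq}{2}$ one gets $p(q-3)\le 3q(q-2)-4$, which rules out $p\ge 4q+1$ when $q\ge 5$ and leaves only $p=2q+1$; then $(11,5)$ is \eqref{betaZ11Z5}, $q=7$ is impossible ($15$ is not prime), and for $q\ge 11$, $p=2q+1\ge 23$ the third estimate with $c\ge 7$ and $c-1<2p/c$ yields $\tfrac{pq}{2}<2p+(q-2)\tfrac{2p}{c}$, forcing $q\le 6$, a contradiction. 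You gesture at $c<\sqrt{2p}+1$ but never carry out this comparison, nor do you isolate $p=2q+1$ as the family where it is indispensable.

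The second gap is $(p,q)=(7,3)$. You correctly observe that $p+6=13>\tfrac{21}{2}$, but your proposed fixes do not work: Pawale's characteristic-zero bound is again $p+6$, and $Z_7\rtimes Z_3$ has no cyclic subgroup of index at most two, so there is no ``cyclic subgroup structure handled elsewhere'' to invoke. In the paper this case is closed by the separate result $\beta(Z_7\rtimes Z_3)=9$ (Theorem~\ref{thm:z7z3}, proved in Section~\ref{sec:z7z3} by a substantial dedicated analysis); within the present paper you should simply cite that theorem, as the paper's proof does by forward reference. A minor further point: the reduction $\beta(G,W)\le\beta(L_+,R)$ comes from the relative transfer, Proposition~\ref{trans}, not from Lemma~\ref{goebel}, and only the inequality is needed, not equality.
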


\begin{proof} 
Recall that $\beta(G,W)\le\beta(L_+,R)$ by Proposition~\ref{trans}. 
Hence by Corollary~\ref{cor:tuske} we have $\beta(Z_p \rtimes Z_3) \le p+6$,
hence $\beta(G) < |G|/2$ for $p>7$.
The case $p=7$ will be treated below, with the result $\beta(Z_7 \rtimes Z_3) =9$ in Theorem~\ref{thm:z7z3}.
For the rest we may assume that $q \ge 5$.
Suppose indirectly that $pq \le 2\beta(Z_p \rtimes Z_q)$. Then  by the first estimate 
in Proposition~\ref{ZpZq_estimates}
\[p(q-3) \le 3q(q-2) -4.\]
Suppose first  that $4q+1 \le p$. In this case   $q^2 -5q +1 \le 0$,
whence $q<5$, a contradiction.
It remains that $p=2q +1$.  
Since by \eqref{betaZ11Z5} our statement is true for $q=5, p=11$,
it remains that $q \ge 11$ (as $2q+1$ is not prime for $q=7$). 
Then $2p < q(q+1)$, so we can apply the third estimate in Proposition~\ref{ZpZq_estimates}.
By the indirect assumption and the fact that $c(c-1) < 2p$ we get that
\[ \frac{pq}{2} < 2p + (q-2)\frac{2p}{c}.  \]
Here $c \ge 7$ as $p \ge 23$, but then by this inequality $q \le 6$, a contradiction. 
\end{proof}


\subsection{The group $Z_7 \rtimes Z_3$}\label{sec:z7z3} 

In this section we will deal with the group $G = Z_7 \rtimes Z_3$, 
and suppose that $\mathrm{char}(\field)\neq 3,7$. The character group $\hat A$ of the abelian normal subgroup $A=Z_7$ of $G$ 
will be identified with the additive group of residue classes modulo $7$, 
so the generator $b$ of $G/A=Z_3$ acts on $\hat A$ by multiplication with $2 \in(\mathbb{Z}/7\mathbb{Z})^{\times}$. 
Then we have three $G/A$-orbits in $\hat A$, namely  $A_0:=\{ 0\}$, $A_{+} := \{1,2,4 \}$, $A_{-} := \{ 3,5,6\}$. 
Accordingly $G$ has two non-isomorphic irreducible representations of dimension $3$, denoted by $V_{+}$ and $V_{-}$.
Let $W$ be an arbitrary representation of $G$; it has a decomposition 
\begin{align}
W=  V_+^{\oplus n_+} \oplus V_-^{\oplus n_-} \oplus V_0 \end{align}
where $V_0$ is  a representation of $Z_3$ lifted to $G$. 
Any monomial $m\in \field[W]$ has a canonic factorization 
$m=m_+m_- m_0$ given by the canonic isomorphism $\field[W] \cong  \field[V_+^{\oplus n_+}] \otimes \field[V_-^{\oplus n_-}] \otimes \field[V_0]$;
the degrees of these factors will be denoted by $d_+(m), d_-(m), d_0(m)$.  
Finally we set $I = \field[W]^A$, $R = \field[W]^G$ and let $\tau=\tau^G_A: I \to R$ denote the transfer map.

\begin{proposition} \label{prop:lapos}
Let $m\in \field[W]$ be a $Z_7$-invariant monomial with   $\deg(m) \ge 7$, $d_0(m)=0$ and $d_+(m),d_-(m) \ge 1$. 
Then $m\in I_2I_++I_+R_+$. 
\end{proposition}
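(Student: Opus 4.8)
The plan is to argue that a $Z_7$-invariant monomial $m$ with $d_0(m)=0$, $d_+(m),d_-(m)\ge 1$ and $\deg(m)\ge 7$ must contain a short $A$-invariant divisor, namely one of degree $1$ or $2$, which by definition lands in $(I_+)_{\le 2}I_+$; the transfer-based reduction then accounts for the factor $I_+R_+$. Concretely, write $S:=\Phi(m)$, a zero-sum sequence over $\hat A=\mathbb{Z}/7\mathbb{Z}$ whose support meets both $A_+=\{1,2,4\}$ and $A_-=\{3,5,6\}$ and avoids $0$. First I would split $S$ into its two parts $S=S_+S_-$ supported on $A_+$ and $A_-$ respectively, set $t_+:=\sigma(S_+)$ and $t_-:=\sigma(S_-)$, so $t_++t_-=0$ in $\mathbb{Z}/7\mathbb{Z}$, and both parts are non-empty. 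The goal is to produce two elements $x,y$ (possibly equal as group elements but distinct as variables, or a single element summing to $0$, which can't happen for a nonzero element of $A_+\cup A_-$) with $x\in\supp(S)$, $y\in\supp(S)$ and $x+y=0$, i.e. a degree $2$ zero-sum divisor; if that fails I would hunt for a single variable of weight $0$, which is excluded, so the real content is finding the degree-$2$ piece.

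The combinatorial heart of the argument is a pigeonhole/Cauchy--Davenport count, exactly in the spirit of Lemma~\ref{lemma:easy} and Lemma~\ref{Zp_corner}. Since $\deg(m)=|S|\ge 7=p$, Lemma~\ref{Zp_corner} already gives a zero-sum subsequence $T\subseteq S$ of length at most $h(S)$; the point is to show that under the stated hypotheses one can choose $T$ with $|T|\le 2$. If $h(S)\le 2$ we are done immediately by Lemma~\ref{Zp_corner}. So the main case is $h(S)\ge 3$: then some element, say $a$, occurs at least $3$ times. If $2a$ or $3a$ (in $\mathbb{Z}/7$) is itself realizable as a sub-sum using the remaining letters of small support, or if $-a\in\supp(S)$, we get a short zero-sum divisor. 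The delicate bookkeeping is to enumerate, up to the $\Aut(\mathbb{Z}/7)$-action and up to swapping $A_+\leftrightarrow A_-$ (multiplication by $-1$), the finitely many support patterns for $S$ that simultaneously (a) are zero-sum-realizable, (b) meet both orbits, (c) contain no element $x$ with $-x$ also present, and (d) have $\deg\ge 7$; and to check each such pattern yields a zero-sum subsequence of length $\le 2$ after all — forcing a contradiction with (c), or else forcing $\deg(m)<7$.

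I expect the main obstacle to be precisely this finite case analysis: the possible multiplicity vectors on the six nonzero residues that avoid a complementary pair $\{x,-x\}$ are constrained (each of the three ``diameters'' $\{1,6\},\{2,5\},\{3,4\}$ contributes to only one side), so the support has size at most $3$ on each side but the two sides interact through the zero-sum condition $t_++t_-=0$; one must rule out the handful of configurations of total degree $\ge 7$. A clean way to organize it is: if $|S_+|\ge 3$ (say) then by Lemma~\ref{lemma:easy} applied inside $A_+$ one has $|\Sigma(S_+)|\ge\min\{7,|S_+|+1\}$, and similarly for $S_-$; choosing subsums $\alpha\mid S_+$, $\beta\mid S_-$ with $\alpha+\beta=0$ of minimal total length and showing this length is $\le 2$ (using that the only way to need length $\ge 3$ is if one side has all letters equal and small) is the crux. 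Once the degree-$\le 2$ $A$-invariant divisor $u\mid m$ is found, write $m=uv$; then $u\in (I_+)_{\le 2}$ and $v\in I_+$ (since $\deg v=\deg m-\deg u\ge 5>0$ and $v$ is $A$-invariant as $m$ and $u$ are), giving $m\in I_2 I_+$; the reader should note the asserted form $I_2I_++I_+R_+$ only needs $I_2I_+$, the $I_+R_+$ summand being slack here but convenient for the later induction.
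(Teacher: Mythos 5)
There is a genuine gap, and it lies exactly where you declare the problem solved: your central claim that the hypotheses force an $A$-invariant divisor of degree at most $2$ (so that $m\in I_2I_+$ and the summand $I_+R_+$ is ``slack'') is false. Take $\Phi(m)=(1^6\,3\,5)$: this is a zero-sum sequence over $Z_7$ of length $8\ge 7$, it avoids $0$, and it meets both orbits $A_+=\{1,2,4\}$ and $A_-=\{3,5,6\}$, so $m$ satisfies all hypotheses of the proposition; yet no two of its weights sum to $0$ (the pairwise sums are $2,4,6,1$), so $m$ has no $A$-invariant divisor of degree $\le 2$ and hence $m\notin I_2I_+$. The same happens for, e.g., $\Phi(m)=(1^{9}3^4)$, so these are not isolated accidents: whenever the support avoids all three complementary pairs $\{1,6\},\{2,5\},\{3,4\}$ one can have height $\ge 3$ and arbitrarily large degree, and your proposed minimal-length zero-sum subsequence genuinely has length $3$ or more. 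Consequently the finite enumeration you sketch cannot end in ``contradiction with (c) or $\deg(m)<7$''; it ends in precisely the configurations the proposition is hardest for. (Your easy cases are fine: $|\supp\Phi(m)|\ge 4$ gives a complementary pair by pigeonhole, and $h(\Phi(m))\le 2$ works via Lemma~\ref{Zp_corner}; but that is not where the content is.)

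The missing idea, which the paper's proof turns on, is to use the relative transfer to change the weight sequence modulo $I_+R_+$: for any factorization $m=uv$ into $A$-invariants one has $u\tau(v)=m+uv^g+uv^{g^2}\in I_+R_+$, hence $m\equiv -(uv^g+uv^{g^2}) \bmod I_+R_+$, and the twisted monomials $uv^g,uv^{g^2}$ have different supports which (after a case analysis on $|\supp\Phi(m)|=2,3,\ge 4$, with a few explicit sequences such as $(1^6\,3\,5)$ treated by hand through a second application of the same trick) do acquire a complementary pair and land in $I_2I_+$. So the $I_+R_+$ term in the statement is essential to its truth, not a convenience for later use, and without some device of this kind a purely combinatorial pigeonhole/Cauchy--Davenport argument about $\Phi(m)$ itself cannot prove the proposition.
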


\begin{proof} 
Denote by $S$ the support of the weight sequence $\Phi(m)$ 
and by $\nu_w$ the multiplicity of $w \in \hat{A}$ in $ \Phi(m)$.
Observe that $|S|\ge 2$ since $d_+(m),d_-(m)$ are both positive.
This also implies that $m \in I_+^2$, since any irreducible zero-sum sequence of length at least $7$ 
is similar to $(1^7)$. 
We have the following cases:

(i) if $|S| \ge 4$ then $S \cap - S \neq \emptyset$ hence already $m \in I_2I_+$. 

(ii) if $|S|=3$ then up to similarity,
we may  suppose  that $S \cap A_+ = \{1\}$ and $S \cap A_{-} = \{ 3,5\}$. 
If  a  factorization  $m=uv$ exists where $u,v$ is $Z_7$-invariant and
$1 \in \Phi(u)$, $(35) \subseteq \Phi(v)$ then obviously $m - u \tau(v) \in I_2I_+$.
This certainly happens if $\Phi(m)$ contains $(1^7)$ or one of the irreducible zero-sum sequences with support $\{ 3,5\}$, namely
$
(3 5^5), 
(3^2 5^3)$, or 
$(3^3 5)$.
Otherwise it remains that $\nu_1 \le 6$, $\nu_3 \le 2$ and $\nu_5 \le 4$.  
Now, if $\Phi(u)=(135^2)$ then necessarily either $1\in \Phi(v)$ or $(35) \subseteq \Phi(v)$, 
and in both cases $m - u \tau(v) \in I_2I_+$.
It remains that $\nu_5= 1$, and therefore $\Phi(m)$ equals $(1^3 3^2 5)$ or $(1^6 3 5)$. 
The first case is excluded since $\deg(m) \ge 7$. 
In the second  take $\Phi(u) = (1^4 3)$ , $\Phi(v) = (1^2 5)$
and observe that $\Phi(uv^{b^2}) $ falls under case (i), 
while $\Phi(uv^b) = (1^4 2^2 3^2)$ is similar to the sequence $(1^2 3^2 5^4)$
which was already dealt with.

(iii) if $|S| =2$ then again $m = uv$ for some $u,v \in I_+$. 
Denote by $U$ and $V$ the support of $\Phi(u)$ and $\Phi(v)$, respectively. 
If $|U| \ge 2$ or $|V| \ge 2$ then after replacing $m$
by $m - u \tau(v)$ we get back to case (ii) or (i). 
Otherwise $\Phi(m) = (a^{7i} b^{7j})$ for some $a\in A_+, b \in A_-$ and $i,j \ge 1$; 
but then an integer $1 \le n \le 6$ exists such that $(ab^n)(a^{7i-1}b^{7j-n})$ is a $Z_7$-invariant factorization,
and we are done as before. 
\end{proof}

\begin{corollary}\label{cor:lapos}  
If $m\in \field[W]$ is a $Z_7$-invariant monomial such that
$\deg(m) \ge 10$ and
$d_0(m) \ge 2$ or $ \min\{d_+(m),d_-(m)\} \ge 3 - d_0(m)$
then $m\in I_+R_+$. 
\end{corollary}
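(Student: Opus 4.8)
\emph{Proof proposal.} The plan is to induct on the shape $\lambda(m)$ (with respect to $\prec$) among the $A$-invariant monomials of a fixed degree $D:=\deg m\ge 10$ satisfying the hypotheses, using three tools: Proposition~\ref{prop:lapos}; the relation $(I_+)^{3}\subseteq I_+R_++R_+$, which is Proposition~\ref{prop:knopeta}(iii) with $H=A$ and $[G:A]=3$ (so also $(I_+)^{4}\subseteq I_+(I_+R_++R_+)=I_+^2R_++I_+R_+\subseteq I_+R_+$, i.e.\ every product of four nonempty $A$-invariants lies in $I_+R_+$); and the fact, already used in the proof of Proposition~\ref{prop:lapos}, that a zero-sum sequence over $Z_7$ of length $>7$ — or of length $7$ meeting both $A_+$ and $A_-$ — is reducible, since the only irreducible zero-sum sequences of length $\D(Z_7)=7$ are the $(a^{7})$. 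In each case it will suffice either to write $m$ as a fourfold product of nonempty $A$-invariants, or to express $m$ modulo $I_+R_+$ as a combination of monomials of the same degree and the same triple $(d_+,d_-,d_0)$ — hence still covered by the corollary — but of strictly smaller shape, whereupon the induction hypothesis finishes.

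If $d_0(m)\ge 2$, take two variables $x_1,x_2$ of $V_0$ (each a degree-$1$ $A$-invariant): then $m=x_1x_2m'$ with $m'=m_+m_-$ an $A$-invariant of length $D-2\ge 8$, hence reducible, so $m$ is a fourfold product and $m\in I_+R_+$. If $d_0(m)=1$ then the hypothesis gives $d_+(m),d_-(m)\ge 2$; write $m=xm'$ with $x$ a $V_0$-variable and $m'=m_+m_-\in I_+$ of degree $\ge 9$, $d_\pm(m')\ge 2$, $d_0(m')=0$, and apply Proposition~\ref{prop:lapos} to $m'$. After multiplying by $x\in I_+$, the $I_+R_+$-contribution stays in $I_+R_+$, while a monomial $u'v'$ of the $I_2I_+$-contribution has $u'\in I_2$ with $d_\pm(u')=1$, so $v'=m'/u'$ has length $\ge 7$ and $d_\pm(v')\ge 1$, hence is reducible; thus $xu'v'$ is a fourfold product and lies in $I_+R_+$.

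There remains $d_0(m)=0$ with $d_+(m),d_-(m)\ge 3$. The only reductions used in the proof of Proposition~\ref{prop:lapos} are applications of $\tau^G_A$, which permute the $G/A$-orbits and hence preserve $(d_+,d_-,d_0)$; so we are reduced to a monomial $m=uv$ divisible by a degree-$2$ $A$-invariant monomial $u$, with $\supp\Phi(u)=\{a,-a\}$, $a\in A_+$, still having $d_0=0$, $d_\pm\ge 3$. Then $v=m/u$ has length $D-2\ge 8$, so $v=v_1v_2$ with $v_i\in I_+$, whence $m\in(I_+)^3$. Writing $m=a_1a_ba_{b^2}$ with $\{a_1,a_b,a_{b^2}\}=\{u,v_1,v_2\}$ and unwinding Knop's identity (the proof of Proposition~\ref{prop:knopeta}(iii)), one gets $m\equiv c\,\tau^G_A\bigl(a_1\,a_b^{\,b^2}\,a_{b^2}^{\,b}\bigr)\pmod{I_+R_+}$ for a nonzero $c\in\F$. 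Because the action of $b$ merely permutes each $G/A$-orbit of $\hat A$, it preserves every row-decomposition and hence $\lambda$; so the three monomials in this transfer share both the shape $\lambda(n)$, $n:=a_1a_b^{b^2}a_{b^2}^b$, and the triple $(d_+,d_-,d_0)$ of $m$. Thus if the splitting $v=v_1v_2$ and the labelling can be made so that $\lambda(n)\prec\lambda(m)$, the induction hypothesis yields $m\in I_+R_+$. In the rigid configurations where no labelling works — which forces each irreducible constituent of $v$ to be similar to a brick of $A_\pm$ or to $(a^{7})$ — one substitutes an ad hoc three-part factorization; e.g.\ if $\Phi(m)$ consists of $(a^{7})$ and the brick $(-a,-2a,-4a)$ of $A_-$, the parts $(a^{4},-4a)$, $(a^{2},-2a)$, $(a,-a)$ (in this order) give an $n$ whose $A_+$-part has first row-length $3$ against $\mu_1=1$ for $m$, so $\lambda(n)\prec\lambda(m)$.

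The only real difficulty lies in this last step: one must check that for every shape attainable under the hypotheses there is a three-part zero-sum factorization whose ``twisted product'' $n$ satisfies $\lambda(n)\prec\lambda(m)$, and that the finitely many small-degree monomials missed by the fourfold-product argument do admit such a factorization. The delicate case is when $\Phi(m)$ is dominated by a single repeated weight (essentially $(a^{7})$) together with a brick of the opposite orbit: splitting that brick off intact leaves the shape fixed, and one must instead distribute the repeated weight over the three parts so that the $b$-twists scatter it over its $G/A$-orbit. This is exactly where the hypotheses $\deg m\ge 10$ and $\min(d_+,d_-)\ge 3$ get fully used.
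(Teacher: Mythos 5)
Your cases $d_0(m)\ge 2$ and $d_0(m)=1$ are correct and essentially the paper's argument: you reduce to $m\in I_+^4$ modulo $I_+R_+$ and use $(I_+)^3\subseteq I_+R_++R_+$, hence $I_+^4\subseteq I_+R_+$. The gap is in the main case $d_0(m)=0$, $\min\{d_+,d_-\}\ge 3$. There you stop after one application of Proposition~\ref{prop:lapos}, obtain only $m\in I_+^3$, and try to compensate with an induction on the shape $\lambda(m)$ driven by Knop's identity, i.e. $3m\equiv\tau^G_A\bigl(a_1a_b^{b^2}a_{b^2}^{b}\bigr)\bmod I_+R_+$. That congruence is correct, and twisting by $G/A$ does preserve degree, $(d_+,d_-,d_0)$ and shape; but the whole argument hinges on the claim that the three-part factorization and its labelling can always be chosen so that $\lambda(n)\prec\lambda(m)$, with ``ad hoc'' factorizations in the rigid cases. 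You never prove this: you explicitly flag it as ``the only real difficulty'' and verify a single example ($(a^7)$ together with a brick of $A_-$). Since this is exactly the step where the hypothesis $\min\{d_+,d_-\}\ge 3$ must enter, the proof is incomplete as written. A further caution for any repair: as the paper remarks, $\lambda(S)\prec\lambda(S')$ does not imply $\lambda(ST)\prec\lambda(S'T)$, so the shape bookkeeping under your substitutions is genuinely delicate and cannot be waved through.

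The detour is also unnecessary. After the first application of Proposition~\ref{prop:lapos} the cofactor of the degree-$2$ invariant $u_1$ still has degree $\ge 8$, $d_0=0$ and $d_\pm\ge 2$ (a degree-$2$ $A$-invariant without weight-$0$ variables has $d_+=d_-=1$, and the transfer manipulations preserve the multidegree), so Proposition~\ref{prop:lapos} applies a second time. This writes $m$, modulo $I_+R_+$, as a combination of monomials $u_1u_2v$ with $u_1,u_2\in I_2$ and $\deg(v)\ge 6$, $d_\pm(v)\ge 1$. Every irreducible zero-sum sequence over $Z_7$ of length at least $6$ is similar to $(1^7)$ or $(1^52)$, hence supported in a single $G/A$-orbit; that would force $d_-(m)=d_-(u_1u_2)=2<3$ (or symmetrically for $d_+$), contradicting the hypothesis. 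So $v$ is reducible and $m\in I_+^4+I_+R_+\subseteq I_+R_+$. This second application of Proposition~\ref{prop:lapos} is the paper's route and it avoids the shape induction entirely.
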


\begin{proof} 
By  Corollary~\ref{cor:G:H+1} it is enough to prove that $m \in I_+^4$.
This is immediate if $d_0(m) \ge 2$. 
If $ d_0(m) =1$ then  applying Proposition~\ref{prop:lapos} two times 
shows that $m \in I_1I_2^2I_+$. 
Finally, if $d_0(m) =0$ then again after two applications of Proposition~\ref{prop:lapos}
we may suppose that $m=uv$ where  $\deg(v) \ge 6$, $d_+(v),d_-(v) \ge 1$ and $u \in I_2^2$.
It is easily checked 
that any irreducible zero-sum sequence over $Z_7$ of length at least $6$
is similar to $(1^7)$ or $(1^5 2)$, 
none of which equals $\Phi(m)$ (for then $d_-(m) = d_-(u) = 2$, a contradiction).
Therefore $v \in I_+^2$ follows and again $m \in I_+^4$.
\end{proof} 

\begin{lemma} \label{lemma:trukk}
Let $G= A \rtimes \langle g \rangle $ where $ \langle g\rangle \cong Z_3 $ and $A$ is an arbitrary abelian group.
If $3 \in \field^{\times}$ then for any  $u,v,w \in I_+$ 
\[ uvw -  u v^g w^{g^2} \in I_+ (R_+)_{\le deg(vw)}.  \] 
\end{lemma}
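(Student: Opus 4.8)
The plan is to work directly with the order-$3$ action of $g$ on the invariant ring $I=\F[W]^A$ and to split $I$ into $g$-eigenspaces. Since $3\in\F^\times$ and $\F$ is algebraically closed, it contains a primitive cube root of unity $\omega$. For $f\in I$ I would set $f_j:=\tfrac{1}{3}\sum_{i=0}^{2}\omega^{-ij}f^{g^i}$ for $j=0,1,2$, so that $f=f_0+f_1+f_2$ and $(f_j)^g=\omega^j f_j$. The first step is to record the two elementary facts we shall use: $f_0=\tfrac{1}{3}\tau(f)\in R$, and for any $f,f'\in I$ one has $(f_jf'_k)^g=\omega^{j+k}f_jf'_k$, so $f_jf'_k\in R$ whenever $j+k\equiv 0\pmod 3$.

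Next I would substitute $v=v_0+v_1+v_2$ and $w=w_0+w_1+w_2$ (we may assume $v$ and $w$ homogeneous, the general case following by additivity) into $vw-v^gw^{g^2}$. Using $(v_j)^g=\omega^j v_j$ and $(w_k)^{g^2}=\omega^{2k}w_k$ this gives
\[ vw-v^gw^{g^2}=\sum_{j,k=0}^{2}(1-\omega^{j+2k})\,v_jw_k=\sum_{\substack{0\le j,k\le 2\\ j\ne k}}(1-\omega^{j+2k})\,v_jw_k, \]
since $1-\omega^{j+2k}$ vanishes exactly when $3\mid j+2k$, i.e. (because $2\equiv -1\pmod 3$) when $j=k$. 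This cancellation of the diagonal terms is the real content of the ``trick'': the components $v_jw_j$ lie in $I$ but not in general in $R$, and one has no way to control them, yet they simply do not occur in the difference $vw-v^gw^{g^2}$.

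It then remains to multiply through by $u$ and to check that each surviving term $uv_jw_k$ with $j\ne k$ lies in $I_+(R_+)_{\le\deg(vw)}$; here the hypotheses $v,w\in I_+$ (so that $\deg v_j=\deg v\ge 1$ and $\deg w_k=\deg w\ge 1$) are exactly what makes the degree count work. When $j=0$, the factor $v_0\in R$ is homogeneous of degree $\deg v\le\deg(vw)$ and $uw_k$ is homogeneous of positive degree, so $uv_0w_k=(uw_k)\,v_0$ is a product of an element of $I_+$ and an element of $(R_+)_{\le\deg(vw)}$ (or is $0$ if one of the factors vanishes); the case $k=0$ is symmetric, using $w_0\in R_+$ and $uv_j\in I_+$; and when $\{j,k\}=\{1,2\}$, the product $v_jw_k$ lies in $R$ and is homogeneous of degree $\deg(vw)$, so $uv_jw_k=u\cdot(v_jw_k)\in I_+(R_+)_{\le\deg(vw)}$. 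Summing over $j\ne k$ yields $uvw-uv^gw^{g^2}\in I_+(R_+)_{\le\deg(vw)}$. I do not anticipate a genuine obstacle beyond getting the eigenspace set-up and the degree bookkeeping straight; the one degenerate situation worth a brief remark is when $g$ acts trivially on $I$ (i.e. $I=R$), in which case $v^g=v$, $w^{g^2}=w$ and both sides are zero.
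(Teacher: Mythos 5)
Your argument is correct, but it proves the lemma by a genuinely different route than the paper. You diagonalize the $\bra g\ket$-action on $I$ using a primitive cube root of unity $\omega$ and observe that in $vw-v^gw^{g^2}=\sum_{j,k}(1-\omega^{j+2k})v_jw_k$ the diagonal terms $v_jw_j$ (the only components one cannot place in $I_+R_+$ with the right degree) cancel, after which every surviving term $uv_jw_k$ with $j\neq k$ visibly lies in $I_+(R_+)_{\le \deg(vw)}$; your degree bookkeeping and the reduction to homogeneous $v,w$ are fine. The paper instead verifies the single explicit identity
\[
3\bigl(uvw-uv^gw^{g^2}\bigr)=uv\tau(w)+uw\tau(v)+u\tau(vw)-u\tau(vw^g)-uw^{g^2}\tau(v)-uv^g\tau(w),
\]
where $\tau=\tau^G_A$ is the relative transfer; each summand is manifestly a product of an element of $I_+$ with a transfer of an element of degree at most $\deg(vw)$, so the conclusion is immediate after dividing by $3$. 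The two proofs are close in spirit (your eigenspace projections are averages over $\bra g\ket$, i.e.\ twisted transfers), but they buy slightly different things: your version explains conceptually why the trick works (only the ``diagonal'' eigencomponents are problematic and they drop out of the difference), while the paper's identity needs only $3\in\F^{\times}$ and no cube roots of unity in $\F$, so it is insensitive to the algebraically-closed hypothesis; under the paper's standing conventions this difference is immaterial, and your appeal to $\omega\in\F$ is legitimate.
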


\begin{proof}
The following identity can be checked by mechanic calculation:
 \begin{align*}
 3\left(uvw - uv^gw^{g^2} \right)
 &= uv\tau(w)+uw\tau(v)+u\tau(vw)\\
& -u\tau(vw^g)-uw^{g^2}\tau(v)-uv^g\tau(w).  \qedhere
 \end{align*}
\end{proof}

\begin{proposition} \label{prop:hegyes} 
Let $m\in \field[W]$ be a $Z_7$-invariant monomial with $m_+$ factorized as  $m_+=m_1...m_n$ (where $n:=n_+$)
through the isomorphism $\field[V_+^{\oplus n}] \cong \field[V_+]^{\otimes n}$. 
If $\deg(m)\ge 10$, $d_0(m) \le 1$ and $\max_{i=1}^n \deg(m_i) \ge 3$ then $m\in I_+R_+$.
\end{proposition}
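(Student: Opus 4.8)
\textbf{Proof proposal for Proposition~\ref{prop:hegyes}.}

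The plan is to reduce, via the machinery already established, to the situation handled by Corollary~\ref{cor:lapos}. We are given a $Z_7$-invariant monomial $m$ with $\deg(m)\ge 10$ and $d_0(m)\le 1$, and one of the $V_+$-factors, say $m_1$, satisfies $\deg(m_1)\ge 3$. By Corollary~\ref{cor:G:H+1} it suffices to prove $m\in I_+^4$, or else to show directly that $m\in I_+R_+$ by isolating an appropriate $G$-invariant factor. The idea is: if $\min\{d_+(m),d_-(m)\}$ (together with $d_0(m)$) is already large enough to invoke Corollary~\ref{cor:lapos}, we are done immediately; so the real work is the case where $d_-(m)$ is small relative to $d_+(m)$ — i.e. $m$ is concentrated on the $V_+$ components, with $d_+(m)$ large. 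In that regime $\Phi(m)$ restricted to $A_+=\{1,2,4\}$ is a long sequence, so it contains many disjoint zero-sum subsequences over $Z_7$, and we can try to "trade" a $V_+$-heavy invariant factor into something usable.

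The key step I would carry out is to exploit Lemma~\ref{lemma:trukk} together with the hypothesis $\deg(m_1)\ge 3$: a degree-$3$ (or larger) factor living in a single copy of $V_+$ has weight sequence that is $A$-invariant as soon as its weights sum to zero in $Z_7$; but more to the point, because $m_1$ uses only the three variables of $V_+$, applying the Goebel-type reduction (Lemma~\ref{goebel}, or the $\tau^G_A$-twist in \eqref{eq:tekeres}) to $m_1$ lets us assume, modulo $L_+^G L_+$, that $m$ is \emph{terminal}, hence by Proposition~\ref{prop:gapless} (applied with $\mathcal{V}$ the $V_+$-variables and $p=7$, $q=3$) that $m_+$ has a gapless divisor of degree at least $\min\{d_+(m),\deg(m)-6\}$. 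A gapless monomial in three weights $\{1,2,4\}$ over $Z_7$ has height at most $3$ and degree at most $\binom{3}{2}=3$ before hitting a brick; so once $d_+(m)$ is even moderately large the gapless divisor forces a brick (an $A$-invariant of degree $3$) to split off, or — combined with Lemma~\ref{Zp_corner} applied to the remaining weight sequence — forces an $A$-invariant divisor of small degree. Splitting off such low-degree $A$-invariant factors repeatedly, and using Lemma~\ref{lemma:trukk} to move the $g$-conjugates around so that the residual monomial acquires both a $V_+$ and a $V_-$ contribution, I would bring $m$ into a product $m=uv$ with $u\in I_2 I_+$ (or $u\in I_+^2$ of controlled shape) and $v$ a $Z_7$-invariant monomial of degree $\ge 10-\deg(u)$ satisfying the hypotheses of Corollary~\ref{cor:lapos}; since $\deg(m)\ge 10$ this leaves $\deg(v)$ large enough, and we conclude $m\in I_+R_+$. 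One must also separately dispose of the small/degenerate configurations — e.g. when $d_-(m)=d_0(m)=0$ so that $\Phi(m)$ is supported entirely on $A_+$ — where the long-sequence argument (every irreducible zero-sum sequence over $Z_7$ of length $\ge 6$ is similar to $(1^7)$ or $(1^5 2)$, as used in Corollary~\ref{cor:lapos}) pins down $\Phi(m)$ up to similarity and the claim is checked by hand.

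The main obstacle I anticipate is the bookkeeping in the case $d_-(m)$ small: one has to guarantee that after peeling off one or two low-degree invariant factors, enough degree remains \emph{and} enough ``mixing'' between $A_+$ and $A_-$ has been created to feed Corollary~\ref{cor:lapos} (which needs either $d_0\ge 2$, or $\min\{d_+,d_-\}\ge 3-d_0$). The hypothesis $\max_i\deg(m_i)\ge 3$ is precisely what makes the Goebel/gapless reduction non-vacuous on the $V_+$-block — without it, $m_+$ could be spread thinly as one variable from each of many copies of $V_+$, each of weight say $1$, and then $\Phi(m_+)$ is just $(1^{d_+})$, which is handled instead by the elementary ``$(1^7)$, $(1^52)$'' classification. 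So the proof naturally bifurcates: (a) $\deg(m_i)\ge 3$ genuinely used via Proposition~\ref{prop:gapless} and Lemma~\ref{lemma:trukk}; (b) the leftover degenerate shapes done combinatorially as in Proposition~\ref{prop:lapos} and Corollary~\ref{cor:lapos}. Getting the threshold $10$ to suffice uniformly across these sub-cases — rather than $11$ or $12$ — is the delicate point, and I expect it to come down to a careful choice of which factor to extract first.
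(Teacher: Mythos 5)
There is a genuine gap: your central strategy—peeling off low-degree $A$-invariant factors and using Lemma~\ref{lemma:trukk} ``so that the residual monomial acquires both a $V_+$ and a $V_-$ contribution'' and then invoking Corollary~\ref{cor:lapos}—cannot work, because every tool available here preserves the multidegree $(d_+,d_-,d_0)$. The subgroup $Z_3=\bra g\ket$ permutes the variables of each isotypic component among themselves (up to scalars), so the transfer $\tau^G_A$, the substitution $uvw\mapsto uv^gw^{g^2}$ of Lemma~\ref{lemma:trukk}, and subtracting elements like $u\tau(v)$ all replace a monomial by monomials with the same $(d_+,d_-,d_0)$. Hence no $V_-$-variables can ever be ``created''. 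But the case for which Proposition~\ref{prop:hegyes} is actually needed (see its use in Corollary~\ref{z7z3reg}) is precisely $d_-(m)\le 2-d_0(m)$, e.g. $d_-(m)=d_0(m)=0$; in that multidegree no monomial, and no factor $v$ of one, can ever satisfy the hypotheses of Corollary~\ref{cor:lapos} ($d_0\ge 2$ or $\min\{d_+,d_-\}\ge 3-d_0$). Your fallback $m\in I_+^4$ also genuinely fails there: for $m=x^2yX^6Z$ (degree $10$, weight sequence $(1^8\,2\,4)$) one checks that at most two disjoint non-empty zero-sum subsequences exist, so $m\notin I_+^3$. Finally, your claim that the $d_-=d_0=0$ leftovers are ``pinned down up to similarity and checked by hand'' via the classification of long irreducible zero-sum sequences underestimates this case: $\Phi(m)$ need not be irreducible, and the issue is not the weight sequence alone but how its zero-sum pieces are distributed among the copies of $V_+$.

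For comparison, the paper's proof aims at a different target: produce, modulo $I_+R_+$, a monomial divisible by the $G$-invariant $xyz$ of the single copy of $V_+$ realizing $\max_i\deg(m_i)\ge 3$. One applies Proposition~\ref{prop:gapless} with $\mathcal{V}=\{x,y,z\}$ (this is where the hypothesis $\max_i\deg(m_i)\ge 3$ enters, via $\min\{\deg(m_{\mathcal V}),\deg(m)-6\}\ge 3$) to reduce to $xyz\mid m_{\mathcal V}$ or $x^2y\mid m_{\mathcal V}$; in the latter case one constructs a three-fold $A$-invariant factorization $m=uvw$ with $xy\mid u$, $x\mid v$ (using Lemma~\ref{lemma:easy} when $d_0(m)=1$, or a degree-$4$ gapless divisor $xyZ$ when $d_0(m)=0$) and applies Lemma~\ref{lemma:trukk} to conjugate one factor and manufacture $xyz$. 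The single residual configuration $m=x^2yX^6Z$, where $\Phi(m/xyZ)=(1^7)$ is irreducible, is disposed of by two explicit transfer relations—this is exactly the ``by hand'' case your sketch does not actually cover.
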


\begin{proof}
We shall denote by $x,y,z$ the variables of weight $1,2,4$  belonging to that copy of $V_+$ for which $\deg(m_i)$ is maximal, 
while $X,Y,Z$ will stand for the variables of the same weights  which belong to any other copy of $V_+$. 

Since $d_0(m) \le 1$ by assumption, 
using Proposition~\ref{prop:gapless}  with $\mathcal{V}:=\{x,y,z\}$ we may assume that $m_{\mathcal{V}}$ has a gapless divisor $t$ of degree at least $3$. 
Let $S\subseteq \hat A$ be the support of the weight sequence $\Phi(t)$; clearly  $|S| \ge 2$. 
If $|S| = 3$ then $m_{\mathcal{V}}$ is divisible by the $G$-invariant  $xyz$, and we are done.
It remains that $|S| = 2$ hence by symmetry we may suppose that $m_{\mathcal{V}}$ is divisible by $t=x^2y$. 

If $d_0(m)=1$ then $m$ contains an $A$-invariant variable $w$ and by Lemma~\ref{lemma:easy} $|\Sigma(\Phi(m/tw))| =7 $. 
This gives an $A$-invariant factorization $m/w = uv$ such that $xy \mid u$ and $x \mid v$. 
By Lemma \ref{lemma:trukk} we get that 
$m \equiv uv^bw^{b^2} \mod I_+R_+$,
where $uv^bw^{b^2}$ contains $xyz$ for a suitable choice of  $b \in  \{ g, g^2\}$, so we are done. 

It remains that $d_0(m) =0$. By a similar argument as in the proof of  Proposition~\ref{prop:gapless}, we may assume 
that $m_+$ has a gapless divisor of degree $4$,
while $m_{\mathcal{V}}$ still contains a gapless divisor of degree $3$. 
Therefore we may suppose that $ m_+$ contains $u:=xyZ$ while $m_{\mathcal{V}}$ still contains $x^2y$. 
Now if $m/u \in I_+^2$ then we get an $A$-invariant factorization $m=uvw$  such that $xy \mid u$ and $x \mid v$,
so we are done again by using Lemma \ref{lemma:trukk}. 
Finally, if $m/u$ is irreducible then necessarily $\Phi(m/u) = (1^7)$, so that $m = x^2 y X^6 Z$.
Here we can employ the following relations: 
\begin{align*}
x^2 y X^6 Z & =  
xy X^4 \; \tau(x X^2Z )  -
xy z 	X^4 	 Z^2Y   -
xy^2		 X^5 	 Y^2   \\
xy^2		 X^5 	 Y^2 & =
xy Y^2  \; \tau(yX^5) -
xy z Y^7  -
x^2 y Y^2  Z^5.
\end{align*}
This proves that $m \equiv x^2 y Y^2  Z^5 \mod{I_+R_+}$, and as $xY^2Z^4 \in I_+^2$, the latter monomial 
already belongs to $I_+R_+$ by the first part of this paragraph. 
\end{proof}

\begin{corollary} \label{z7z3reg}
If $W$ is the regular representation $V_{\mathrm{reg}}$ 
of $Z_7 \rtimes Z_3$ then we have 
 $\beta(I_+, R) \le 9$.
\end{corollary}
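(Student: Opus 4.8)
The plan is to bound $\beta(I_+,R)$ where $I=\F[V_{\mathrm{reg}}]^{Z_7}$ and $R=\F[V_{\mathrm{reg}}]^G$ by showing that every $Z_7$-invariant monomial $m$ of degree at least $10$ lies in $I_+R_+$, and then separately checking that the monomials of degree $8,9$ are already accounted for (or contribute the bound $9$). Since $\beta(I_+,R)=\beta(M,R)$ where $M=I_+/R_+I_+$, it suffices to prove $I_{\ge 10}\subseteq I_+R_+$, i.e. $M_d=0$ for $d\ge 10$. Recall that the regular representation of $Z_7\rtimes Z_3$ decomposes as $V_{\mathrm{reg}}\cong V_+\oplus V_-\oplus \F[Z_3]$, so in the notation of the section $n_+=n_-=1$ and $V_0$ is the regular representation of $Z_3$; in particular $d_0(m)\le 3$ always, and the factorization $m_+=m_1\cdots m_n$ has just one factor, $m_+=m_1$.

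First I would take an arbitrary $Z_7$-invariant monomial $m\in\F[V_{\mathrm{reg}}]$ with $\deg(m)\ge 10$ and split into cases according to the triple $(d_+(m),d_-(m),d_0(m))$. The bulk of the cases is handled by the corollaries just proved: if $d_0(m)\ge 2$, or if $\min\{d_+(m),d_-(m)\}\ge 3-d_0(m)$, then $m\in I_+R_+$ by Corollary~\ref{cor:lapos}. Next, if $d_0(m)\le 1$ and $\deg(m_+)=d_+(m)\ge 3$ (here $m_+=m_1$ is a single factor, so $\max_i\deg(m_i)=d_+(m)$), then $m\in I_+R_+$ by Proposition~\ref{prop:hegyes}; the symmetric statement with the roles of $V_+$ and $V_-$ interchanged handles $d_-(m)\ge 3$. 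So the only monomials not yet covered satisfy $d_0(m)\le 1$, $d_+(m)\le 2$, $d_-(m)\le 2$, together with the negation of the Corollary~\ref{cor:lapos} hypothesis $\min\{d_+(m),d_-(m)\}\ge 3-d_0(m)$. But with $d_0(m)\le 1$ we have $3-d_0(m)\ge 2$, so failure of that hypothesis together with $d_+(m),d_-(m)\le 2$ forces $\min\{d_+(m),d_-(m)\}\le 1$, and then $\deg(m)=d_+(m)+d_-(m)+d_0(m)\le 2+1+1=4<10$, a contradiction. Hence every $m$ with $\deg(m)\ge 10$ lies in $I_+R_+$, giving $\beta(I_+,R)\le 9$.

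The step I expect to require the most care is the bookkeeping of the case analysis: one must make sure the hypotheses of Corollary~\ref{cor:lapos} and Proposition~\ref{prop:hegyes} genuinely exhaust the region $\{\deg(m)\ge 10\}$ in the $(d_+,d_-,d_0)$-lattice, and in particular that the degree-$10$ threshold in both results is compatible (it is — both are stated with $\deg(m)\ge 10$). One should also note that if $d_+(m)=0$ then $m$ involves no variables of weight in $A_+$, so $\Phi(m)$ is a zero-sum sequence supported on $A_-\cup\{0\}$, and the analysis still goes through via the $V_-$-symmetric version of Proposition~\ref{prop:hegyes} once $d_-(m)\ge 3$; the residual possibilities with $d_+(m)=d_-(m)=0$ have $m\in\F[V_0]$, which is $G$-invariant modulo nothing, i.e. $d_0(m)=\deg(m)\ge 10\ge 2$ so Corollary~\ref{cor:lapos} applies directly. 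Assembling these observations yields the claimed inequality $\beta(I_+,R)\le 9$ for $W=V_{\mathrm{reg}}$.
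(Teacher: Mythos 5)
Your overall strategy --- reduce to showing $I_{\ge 10}\subseteq I_+R_+$ and exhaust the cases via Corollary~\ref{cor:lapos} and Proposition~\ref{prop:hegyes} --- is exactly the paper's, but the proof rests on a wrong identification of the regular representation. For $G=Z_7\rtimes Z_3$ each irreducible occurs in $V_{\mathrm{reg}}$ with multiplicity equal to its dimension, so $V_{\mathrm{reg}}\cong V_+^{\oplus 3}\oplus V_-^{\oplus 3}\oplus V_0$ with $V_0\cong\F[Z_3]$, of total dimension $21=|G|$; thus $n_+=n_-=3$, not $1$, and your module $V_+\oplus V_-\oplus\F[Z_3]$ is a $9$-dimensional proper submodule. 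This is not a cosmetic slip: your appeal to Proposition~\ref{prop:hegyes} uses the identity $\max_i\deg(m_i)=d_+(m)$, which holds only when there is a single copy of $V_+$; with three copies, $d_+(m)\ge 3$ does not force any tensor factor $m_i$ to have degree $\ge 3$ (e.g.\ $d_+=4$ could split as $2+1+1$), so your case analysis does not cover the actual $V_{\mathrm{reg}}$. As written you have proved the bound only for a smaller module, which is strictly weaker than the statement and would not feed correctly into the polarization argument of Proposition~\ref{prop:Z7Z3_ch>2}, where the bound for the genuine regular representation (with its full multiplicities) is what gets used.

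The gap is easy to close, and the repair is precisely the paper's proof: if Corollary~\ref{cor:lapos} does not apply to a $Z_7$-invariant monomial $m$ with $\deg(m)\ge 10$, then $d_0(m)\le 1$ and, after the $V_+\leftrightarrow V_-$ symmetry you already invoke, $d_-(m)\le 2-d_0(m)$; hence $d_+(m)\ge 8$, and by pigeonhole among the three tensor factors of $m_+=m_1m_2m_3$ some $\deg(m_i)\ge 3$, so Proposition~\ref{prop:hegyes} finishes. Your residual case ($d_\pm\le 2$, $d_0\le 1$) then never arises. A minor further point: the remark about ``separately checking the monomials of degree $8,9$'' is unnecessary, since $\beta(I_+,R)\le 9$ follows from $I_{\ge 10}\subseteq I_+R_+$ alone, as you in fact use.
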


\begin{proof}
Here we have   $n_+ = n_- = 3$. 
Let $m \in I_+$ be a monomial with $\deg(m) \ge 10$. 
If  Corollary~\ref{cor:lapos} can be applied then $m \in I_+R_+$ already holds.
Otherwise $d_0(m) \le 1$ and say $d_-(m) \le 2-d_0(m)$, whence $d_+(m) \ge 8$. 
Then one of the monomials in the factorization $m_+=m_1m_2m_3$,  say $m_{1}$ has degree at least $3$,
and we are done by Proposition~\ref{prop:hegyes}. 
\end{proof}

It was observed by Schmid that $\beta(G) = \beta(G, V_{\mathrm{reg} })$ for any finite group $G$ if  $\mathrm{char} (\field) = 0$.
This is based on  Weyl's theorem on polarization (see \cite{weyl}). 
If $\mathrm{char}(\field) > 0$,  then Weyl's theorem on polarizations fails even in the non-modular case; instead of that, if $\mathrm{char}(\field)$ does not divide $|G|$
then by a result of  Grosshans in \cite{grosshans} 
for any $G$-module $W$ containing $V_{\mathrm{reg}}$ as a submodule,
 the ring $\field[W]^G$ is the $p$-root closure of its subalgebra generated by the  polarization of  
$\field[V_{\mathrm{reg}}]^G$. 

Corollary~\ref{z7z3reg} is an improvement of Pawale's result  who proved in \cite{pawale} in characteristic $0$ that $\beta(G,W) = 9$ for $n_+,n_- = 2$, 
and from this he concluded $\beta(G)= 9$ using a  version of Weyl's Theorem on polarization. 
For positive characteristic we will use the following result: 

\begin{proposition}[Knop, Theorem 6.1 in \cite{knop}]\label{prop:knop}
Let $U$ and $V$ be finite dimensional $G$-modules. If $ n_0 \ge \max \{ \dim (V), \frac{\beta(G)}{\mathrm{char}(\field) -1} \}$ and 
$S$ is a generating set of $\field[U \oplus V^{\oplus n_0}]^G$ then  $\field[U\oplus V^{\oplus n}]^G$ for any $n \ge n_0$
is generated by the polarization (with respect to the type-$V$ variables) of $S$. 
\end{proposition}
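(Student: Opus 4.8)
The natural model for this statement is Weyl's classical polarization theorem (cf.\ \cite{weyl}), and the proof amounts to running that argument while keeping every coefficient that occurs invertible in $\F$. Fix once and for all the $n$ copies of the type-$V$ variables, the last $n-n_0$ of which do not occur in the elements of $S$. The polarization operators (in the type-$V$ variables) are the coefficients in $t$ of the $G$-equivariant linear substitutions $y^{(j)}\mapsto y^{(j)}+t\,y^{(i)}$ among these copies; each of them carries $G$-invariants to $G$-invariants. Let $\mathcal{P}$ be the $\F$-subalgebra of $\F[U\oplus V^{\oplus n}]^G$ generated by the polarizations of $S$; then $\mathcal{P}$ is stable under all the polarization operators and under permutations of the $n$ copies, and $\mathcal{P}\supseteq\F[U\oplus V^{\oplus n_0}]^G$ via the trivial polarization. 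The inclusion $\mathcal{P}\subseteq\F[U\oplus V^{\oplus n}]^G$ is clear, and since $\mathcal{P}$ is a subalgebra it suffices for the converse to show that every algebra generator of $\F[U\oplus V^{\oplus n}]^G$ lies in $\mathcal{P}$. By \eqref{noetherbound} such generators can be taken of degree at most $\beta(G,U\oplus V^{\oplus n})\le\beta(G)$, and since $G$ acts diagonally on the copies while the polarization operators preserve the total type-$V$ degree, we may assume a generator $f$ is homogeneous in the type-$U$ variables and of a single type-$V$ multidegree $\alpha=(\alpha_1,\dots,\alpha_n)$ with type-$V$ degree $d=|\alpha|\le\beta(G)$. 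I would then prove $f\in\mathcal{P}$ by induction on $\alpha$ with respect to a fixed total order on the type-$V$ multidegrees of weight $d$; in the base case $\alpha$ is supported on at most $n_0$ copies, so after permuting copies $f$ lies in $\F[U\oplus V^{\oplus n_0}]^G\subseteq\mathcal{P}$.

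For the inductive step, let $f$ have multidegree $\alpha$ supported on more than $n_0$ copies, and perform a restitution--polarization round trip. Pick a map $\phi\colon\{1,\dots,n\}\to\{1,\dots,n_0\}$ and let $\rho_\phi$ be the associated restitution homomorphism, substituting the $\phi(i)$-th copy of the type-$V$ variables for the $i$-th; it is a $G$-equivariant graded algebra homomorphism, so $\rho_\phi(f)\in\F[U\oplus V^{\oplus n_0}]^G\subseteq\mathcal{P}$, and it has type-$V$ multidegree $\beta$ with $\beta_j=\sum_{\phi(i)=j}\alpha_i$. There is then a polarization operator $\Pi$ --- a composite of elementary polarizations among the first $n_0$ copies and the copies in the support of $\alpha$ --- such that
\[\Pi\bigl(\rho_\phi(f)\bigr)=c_\phi\,f+(\text{an }\F\text{-combination of invariants of type-}V\text{ multidegree}\neq\alpha),\]
where the correction terms share the type-$U$ degree and total type-$V$ degree of $f$ and are strictly lower in the chosen order, while $c_\phi$ is a product of multinomial coefficients recording how $\phi$ redistributes the weight of $\alpha$ into the blocks; in particular $c_\phi$ is a unit in $\F$ as soon as each block-degree $\beta_j$ is smaller than $\ch(\F)$. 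Granting $c_\phi\in\F^{\times}$, the left-hand side lies in $\mathcal{P}$ because $\rho_\phi(f)\in\mathcal{P}$ and $\mathcal{P}$ is polarization-stable, the correction terms lie in $\mathcal{P}$ by the induction hypothesis, and therefore $f\in\mathcal{P}$.

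It remains to produce a map $\phi$ with $c_\phi\in\F^{\times}$, and this is precisely where the hypothesis $n_0\ge\max\{\dim V,\ \beta(G)/(\ch(\F)-1)\}$ is used. The bound $n_0\ge\dim V$ supplies the classical linear reduction: any $\dim V+1$ of the vectors carry a linear dependence, and feeding it through elementary polarizations, with the scalar shifts chosen so that no base-$\ch(\F)$ carry occurs, lets one replace $f$ by invariants all of whose per-copy type-$V$ degrees are below $\ch(\F)$. The bound $n_0\ge\beta(G)/(\ch(\F)-1)$, i.e.\ $n_0(\ch(\F)-1)\ge d$, then makes it possible to choose $\phi$ redistributing the weight of $\alpha$ into the $n_0$ blocks with every $\beta_j<\ch(\F)$, so that $c_\phi$ is a product of carry-free multinomial coefficients and hence a unit. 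The main obstacle --- and the only point genuinely beyond Weyl's characteristic-zero theorem --- is exactly this coefficient bookkeeping: the linear reduction, the splitting of over-large per-copy degrees, and the packing into $n_0$ blocks must be interleaved so that at every stage the relevant multinomial coefficients remain invertible modulo $\ch(\F)$ and the correction terms strictly decrease in the induction order.
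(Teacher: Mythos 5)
This proposition is not proved in the paper at all: it is imported verbatim as Knop's Theorem 6.1 from \cite{knop}, so there is no in-paper argument to compare yours with; your attempt has to stand on its own, and as it stands it has a genuine gap at its central step. The identity you assert in the inductive step, $\Pi\bigl(\rho_\phi(f)\bigr)=c_\phi f+(\text{invariants of type-}V\text{ multidegree}\neq\alpha)$ with the corrections strictly lower in your order, is not correct. A composite of elementary polarizations applied to the multihomogeneous element $\rho_\phi(f)$ produces an element of a \emph{single} multidegree, so if $\Pi$ is arranged to land in multidegree $\alpha$ the correction terms also have multidegree exactly $\alpha$; they are the ``shuffled'' redistributions of $f$ within the fibres of $\phi$, not lower-order terms. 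Concretely, take $G$ trivial, $\dim V=2$ with variables $x_i,y_i$ in copy $i$, $f=x_1y_2$, and $\phi$ collapsing both copies to the first: then $\rho_\phi(f)=x_1y_1$ and its multidegree-$(1,1)$ polarization is $x_1y_2+x_2y_1$. The correction $x_2y_1$ has the same multidegree as $f$, so your induction on an order on multidegrees can never absorb it, and the coefficient of $f$ is $1$, not the multinomial coefficient $\binom{2}{1,1}=2$ (that coefficient multiplies only the symmetrized sum). In characteristic $0$ this is exactly the point where Weyl's theorem needs semisimplicity/Capelli-type input, and in characteristic $p$ it is the entire difficulty that Knop resolves.

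Relatedly, the role of $n_0\ge\dim V$ is only gestured at (``feeding the linear dependence through elementary polarizations, with the scalar shifts chosen so that no base-$\ch(\F)$ carry occurs''), and your final paragraph concedes that the interleaving of this linear reduction with the carry-free packing into $n_0$ blocks ``must be'' arranged --- but that interleaving is precisely the content of Knop's proof (his Theorem 5.1 together with formula (6.3), established via the structure of polarization modules for $\GL_n$ in characteristic $p$), not routine bookkeeping one may defer. What is sound in your write-up: the reduction via the Noether bound to multihomogeneous generators of total type-$V$ degree at most $\beta(G)$, the fact that $\mathcal{P}$ contains $\F[U\oplus V^{\oplus n_0}]^G$ and is stable under polarization and under injections of the copies, and the correct identification of the carry-free multinomial condition as the source of the hypothesis $n_0\ge\beta(G)/(\ch(\F)-1)$. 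But without a correct replacement for the asserted round-trip identity, the proof does not go through.
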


\begin{proposition} \label{prop:Z7Z3_ch>2}
If $\mathrm{char}(\field) \neq 2,3,7$ then $\beta(G) \le 9$.
\end{proposition}

\begin{proof}
We already know that $\beta(G)\leq 13$ from Corollary~\ref{cor:tuske}. 
Therefore it is sufficient to show that $R_d \subseteq R_+^2$ whenever $10 \le d \le 13$.
Suppose first that $\mathrm{char}(\field)>7$. Then $\max \{ \dim(V_+) , \dim(V_-), \frac{\beta(G)}{\mathrm{char}(\field) -1} \} = 3 $
hence by Proposition~\ref{prop:knop} a generating  set of $\field[W]^G$ can be obtained by polarizations
from a generating set of $\field[V_{\mathrm{reg}}]^G$, so $\beta(G) \le \beta(G,V_{\mathrm{reg}}) \le 9$ by Corollary~\ref{z7z3reg}.

Finally let $\mathrm{char}(\field)=5$, so that $\max \{ \dim(V_+), \dim(V_-), \frac{\beta(G)}{\mathrm{char}(\field) -1}\} \le 4$. 
By Proposition~\ref{prop:knop} here we can obtain the generators of $R$ by polarizing
the generators of $S := \field[V_+^4 \oplus V_-^4 \oplus V_0]^G$. 
$S$ is spanned by elements $f$ that are multihomogeneous in the sense that
for all monomials $m$ occurring in $f$ the triple $(d_+(m),d_-(m),d_0(m))$ is the same; denote it by  $(d_+(f),d_-(f),d_0(f))$.
We know from formula (6.3) and Theorem 5.1 in \cite{knop}  that 
$f$ is contained in the polarization of $\field[V_{\mathrm{reg}}]$ (taken with respect to $V_+^{\oplus 3}$ and then to  $V_-^{\oplus 3}$ separately), 
if $d_+(f), d_-(f) \le 3(\mathrm{char}(\field) - 1) = 12$. 
So for the rest we may suppose that say $d_+(f) \ge 13$. 
Then let $f_+ = f_1f_2f_3f_4$ be the factorization given by the isomorphism $\field[V_+^{\oplus 4}] \cong \field[V_+]^{\otimes 4}$, 
and observe that $\deg(f_i) \ge 4$ for some $i\le 4$,
whence $f \in I_+R_+$  by Proposition~\ref{prop:hegyes}. 
 \end{proof}

\subsection{The case of characteristic $2$}

The polarization arguments at the end of the previous section does not cover the case  $\mathrm{char}(\field) =2$. 
Here we  need a closer look at the interplay between our extended Goebel algorithm 
and the elementary polarization operators 
\[ \Delta_{i,j} := x_j \frac{\partial}{\partial x_i}+ y_j \frac{\partial}{\partial y_i} + z_j \frac{\partial}{\partial z_i}\]
where as before $\field[V_+^{\oplus n}] = \otimes_{i=1}^n\field[x_i,y_i,z_i]$ and the variables $x_i,y_i,z_i$ have weight $1,2,4$, respectively. 
The operators $\Delta_{i,j}$ are $G$-equivariant, hence map
$G$-invariants to $G$-invariants.
Moreover, by the Leibniz rule it also holds that:
\begin{align} \label{pol}   \Delta_{i,j}(I_+R_+) \subseteq I_+R_+. \end{align}

\begin{proposition} \label{prop:Z7Z3_ch=2}
If $\mathrm{char}(\field) = 2$  then
$\beta(I_+,R) \le 9 $. 
\end{proposition}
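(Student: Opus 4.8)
The plan is to reduce, just as in the characteristic-zero argument, to the regular representation via polarization, but since Weyl's and Knop's polarization theorems are not available in characteristic $2$ (indeed $\ch(\F)-1=1$, so the bound $\beta(G)/(\ch(\F)-1)$ is useless), one must track the polarization operators $\Delta_{i,j}$ by hand. First I would recall from Corollary~\ref{cor:tuske} that $\beta(G,W)\le\beta(L_+,R)\le 13$ for every $G$-module $W$, so it suffices to show that every $R$-module generator of $I_+$ — equivalently, by Proposition~\ref{trans}, every monomial $m\in I_+$ — of degree $d$ with $10\le d\le 13$ lies in $I_+R_+$. Exactly as in the proof of Corollary~\ref{z7z3reg}, if Corollary~\ref{cor:lapos} applies to $m$ we are done, so we may assume $d_0(m)\le 1$ and (after swapping $V_+\leftrightarrow V_-$) that $d_+(m)$ is large; if in addition $m_+=m_1\cdots m_{n_+}$ has a tensor factor $m_i$ of degree $\ge 3$, then Proposition~\ref{prop:hegyes} finishes. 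So the only monomials not yet handled are those with $d_+(m)$ (or $d_-(m)$) at least $10-1=9$, say, but with every tensor factor $m_i$ of degree $\le 2$; this forces $n_+$ (resp. $n_-$) to be fairly large, at least $\lceil 9/2\rceil=5$.

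The key step — and the main obstacle — is to handle this remaining ``spread-out'' case, where the high $V_+$-degree is distributed in small chunks $\deg(m_i)\le 2$ across many copies of $V_+$. Here I would apply the elementary polarization operators $\Delta_{i,j}$ to pull the chunks together: starting from a monomial $m$ in which the $V_+$-part is spread over copies $1,\dots,n_+$, repeatedly apply suitable $\Delta_{i,j}$ to move all $x,y,z$-variables onto a single copy of $V_+$, producing (up to nonzero scalars and lower-order terms that are themselves in the $\Delta$-orbit of $m$) a monomial $m'$ with a tensor factor of degree $\ge 3$. Since $\Delta_{i,j}$ is $G$-equivariant it maps $I_+\to I_+$ and $R_+\to R_+$, so by \eqref{pol} we have $\Delta_{i,j}(I_+R_+)\subseteq I_+R_+$; conversely I would argue that $m$ lies in the span of $\Delta$-images of such ``concentrated'' monomials $m'$ together with lower-degree-in-$V_+$ pieces handled by induction. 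The subtlety in characteristic $2$ is that the binomial coefficients produced by iterating $\Delta_{i,j}$ on a monomial (e.g.\ $\Delta_{i,j}^2$ introduces a factor $2$) may vanish, so one cannot freely ``restitute'' a concentrated monomial back to a spread-out one; the remedy is to choose the order and the pairs $(i,j)$ of polarizations so that only coefficients $\equiv 1\pmod 2$ occur — concretely, each application moves a single variable, never squaring an index, which contributes a multiplicity-one coefficient.

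Once the concentration step is in place, the monomial $m'$ has a $V_+$-tensor factor of degree $\ge 3$ and still satisfies $d_0(m')=d_0(m)\le 1$ and $\deg(m')=\deg(m)\ge 10$, so Proposition~\ref{prop:hegyes} gives $m'\in I_+R_+$; running the polarization relations backwards (valid since the relevant coefficients are $1$ in $\F$) then yields $m\in I_+R_+$ modulo terms of strictly smaller $V_+$-degree, on which we induct, the base case being exactly the monomials covered by Corollary~\ref{cor:lapos} or by Proposition~\ref{prop:hegyes} directly. I expect the bookkeeping of which monomials remain after Corollary~\ref{cor:lapos} and Proposition~\ref{prop:hegyes} — and the explicit verification that the polarization identities used carry coefficient $1$ in characteristic $2$ — to be the delicate part; the rest parallels the proof of Corollary~\ref{z7z3reg} and of Proposition~\ref{prop:Z7Z3_ch>2}.
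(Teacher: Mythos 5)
Your reduction to the monomials left over after Corollary~\ref{cor:lapos} and Proposition~\ref{prop:hegyes} (so $d_0(m)\le 1$, $d_\pm$ small on one side, all tensor factors $m_i$ of degree $\le 2$) matches the paper, and so does the idea of tracking the operators $\Delta_{i,j}$ by hand using \eqref{pol}. But the heart of your argument --- the ``concentration step'' --- is asserted rather than proved, and as described it does not work. Applying $\Delta_{i,j}$ to a candidate concentrated monomial $m'$ produces one term for \emph{each} variable of the relevant type sitting in the $i$-th copy, and you have no mechanism for disposing of the unwanted terms: they are in general neither zero nor ``lower-order terms in the $\Delta$-orbit of $m$''. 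Your proposed remedy (``each application moves a single variable, never squaring an index, so only coefficients $\equiv 1 \pmod 2$ occur'') gets the character-$2$ phenomenon backwards: in the paper the one case where pure polarization suffices is exactly the case where $m_+$ contains a square, because $\Delta_{1,2}(x_1^2y_1u)=2x_1y_1x_2u+x_1^2y_2u$ and the coefficient $2$ \emph{kills} the unwanted term, leaving $m=\Delta_{1,2}(x_1^2y_1u)\in\Delta_{1,2}(I_+R_+)\subseteq I_+R_+$ since $x_1^2y_1u$ has a degree-$3$ tensor factor. For the remaining square-free and multilinear monomials no choice of single-variable moves realizes $m$ as a unit-coefficient $\Delta$-image of a concentrated monomial with controllable error terms; the paper instead has to manufacture a square first, using Lemma~\ref{lemma:trukk} (note $3\in\F^\times$ in characteristic $2$) together with factorization arguments resting on Lemma~\ref{lemma:easy} and the classification of long irreducible zero-sum sequences over $Z_7$, plus explicit transfer relations (e.g.\ for $\Phi(m/u)=(2^7)$, and the identity $\Delta_{1,2}z_1x_1y_3u+\Delta_{3,1}z_2x_3y_3u=m+\tau(x_1y_2z_3)u$ in the multilinear case). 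None of this is supplied or even anticipated by ``running the polarization relations backwards'', so the proof has a genuine gap precisely where the characteristic-$2$ difficulty lies.

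A secondary, fixable imprecision: to restrict to degrees $10\le d\le 13$ you need $\beta(I_+,R)\le 13$, not the bound $\beta(G,W)\le\beta(L_+,R)\le 13$ you quote; this does follow from Corollary~\ref{cor:tuske} because $I_+$ is an $R$-module direct summand of $L_+$ (Reynolds operator for $A$, as $\ch(\F)\ne 7$), but it should be said. The paper avoids the issue by treating all monomials of degree $\ge 10$ uniformly.
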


\begin{proof} 
Let $m \in I$ be a monomial with $\deg(m) \ge 10$. It is sufficient to show that $m\in I_+R_+$. 
We may suppose by symmetry  that $d_+(m) \ge d_-(m)$.
It suffices to deal with the cases not covered by Corollary~\ref{cor:lapos} 
so we may suppose that $d_0(m) \le 1$, $d_-(m) \le 2 - d_0(m)$, whence 
$d_+(m) \ge 8$. 
By  Proposition~\ref{prop:gapless} we can assume that $m_+$ contains a gapless monomial of degree $3$. 
We  have  several   cases:

(i) Let $m_+=m_1...m_n$ where each monomial $m_i$ belongs to a different copy of $V_+$. 
If $\deg(m_i) \ge 3$ for some $i \ge 1$ then $ m\in I_+R_+$ by Proposition~\ref{prop:hegyes}. 
So for the rest we may suppose that $\deg(m_i) \le 2$ for every $i=1,...,n$.

(ii) If $m_+$ contains the square of a variable, say $x_1^2$ then
a variable of weight $2$ or $4$ must also divide $m$, say $m =x_1^2 y_2 u$, 
because we assumed that $m_+$ contains a gapless divisor of degree $3$. 
Here we have
\begin{align*}
\Delta_{1,2} x_1^2 y_1 u = 2 x_1y_1x_2u + x_1^2 y_2 u = m
\end{align*}
as $\mathrm{char}(\field)=2$. In view of case (i) and \eqref{pol} this shows  that  $m \in I_+R_+$. 

(iii) If $m_+$ is square-free, but still $\deg(m_i) =2$ for some $i$, say $x_1y_1 \mid m$, then 
our goal will be to find three monomials $u,v,w \in I_+$ such that $m=uvw$ and $x_1 \mid u$, $y_1 \mid v$.
For then $m \equiv uv^bw^{b^2} \mod I_+R_+$ by Lemma~\ref{lemma:trukk} 
where $b$ can be chosen so that $uv^bw^{b^2}$ contains $x_1^2$, and then $m$ will  fall under case (ii). 
Here are some conditions under which this goal can be achieved:
\begin{enumerate}
\item[(a)] If $d_0(m) =1$ then let $w$ be the  $Z_7$-invariant variable  in $m$;
given that $|\Sigma(\Phi(m/wx_1y_1))|=7$ by Lemma~\ref{lemma:easy},  suitable factors $u,v$ must exist. 

\item[(b)] It remains that $d_0(m) = 0$. Again by  Proposition~\ref{prop:gapless}  (with $\mathcal{V}$   the set of variables in $\field[ V_+^n]$)
we assure that $m_+$ contains a gapless monomial of degree $4$, hence also a $Z_7$-invariant  $u := x_1y_1Z$. 
Suppose now that $m/u=vw$ for some $v,w \in I_+$. 
Up to equivalence modulo $I_+R_+$ we may also suppose  that 
one of these two monomials, say $v$ contains a variable $X$ (or $Y$). 
After swapping $x_1$ and $X$ (or $y_1$ and $Y$) in $u$ and $v$ we are done.

\item[(c)] If  $d_-(m)>0$, then $m/u$ has a variable $t$   such that 
some $f\in \{x_1t,y_1t, Zt \}$ belongs to $I$; as $\deg(m/f)\ge 8$, the desired factorization of $m$ is given by Lemma~\ref{lemma:easy}. 

\item[(d)] It remains that $d_0(m)=d_-(m)=0$ and $\Phi(m/u)$ is an irreducible zero-sum sequence. 
Since $\deg(m/u) \ge 7$ it follows that $\Phi(m/u)$ equals $(2^7)$, $ (1^7)$ or $(4^7)$. 
In the first  case we use the relation:
\[m=x_1y_1Z Y^7 = \tau(x_1 Y^3) y_1ZY^4 - y_1^2Y^4Z^4 - z_1y_1 X^3Y^4Z  \]
where the two monomials on the right hand side fall under case (ii) or (iii/b). 
The case $\Phi(m/u) = (1^7)$  is similar. Finally, if $\Phi(m/u) = (4^7)$ then we replace $m$ with $m - u\tau(m/u)$ 
to reduce to the other two cases. 
\end{enumerate}

(iv) If $m$ is multilinear: here we can again assume that $(124) \subseteq \Phi(m)$. 
If $d_0(m) =0$ then this is achieved using Proposition~\ref{prop:gapless}.
Otherwise, if there is a $Z_7$-invariant variable $w$  in $m$
then we may still suppose by Proposition~\ref{prop:gapless}  that e.g. $x_1y_2x_3\mid m$
and the same argument as above at (iii/a) gives a factorization $m/w = uv$ such that 
$x_1y_2 \mid u$ and $x_3 \mid v$, so our goal is achieved by Lemma~\ref{lemma:trukk}. 
Now we may suppose that $m=x_1y_2z_3u$, say. We have:
\begin{align*}
\Delta_{1,2} z_1 x_1 y_3u   + \Delta_{3,1} z_2 x_3 y_3 u &= 
(z_1 x_2 y_3  
+ z_2 x_3 y_1 )u 
 = m + \tau(x_1y_2z_3) u 
\end{align*}
The monomials $z_1 x_1 y_3 u$ and $z_2 x_3 y_3 u$ fall under case (iii),
so $m \in I_+R_+$. 
\end{proof}

Comparing Proposition~\ref{prop:Z7Z3_ch>2} and Proposition~\ref{prop:Z7Z3_ch=2} with the lower bound
mentioned at the beginning of Chapter~\ref{ch:semidir}, we have proved:

\begin{theorem} \label{thm:z7z3}
If $\mathrm{char}(\field) \neq 3,7$ then $\beta(Z_7 \rtimes Z_3)=9$.   
\end{theorem}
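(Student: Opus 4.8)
The plan is to obtain the equality $\beta(Z_7\rtimes Z_3)=9$ by combining a lower bound with matching upper bounds that together cover every admissible characteristic $\ch(\F)\neq 3,7$.

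For the lower bound I would invoke the general estimate recalled at the beginning of Chapter~\ref{ch:semidir}: for every non-abelian semidirect product $Z_p\rtimes Z_q$ one has $\beta(Z_p\rtimes Z_q)\ge p+q-1$ (this follows from \cite{CzD:2}, and can also be checked directly by exhibiting an indecomposable invariant of that degree on a suitable $G$-module). Taking $p=7$, $q=3$ gives $\beta(Z_7\rtimes Z_3)\ge 9$.

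For the upper bound I would split according to $\ch(\F)$. When $\ch(\F)\neq 2,3,7$, Proposition~\ref{prop:Z7Z3_ch>2} already gives $\beta(G)\le 9$ outright, so nothing further is needed in that range. The only remaining admissible characteristic is $\ch(\F)=2$. Here Proposition~\ref{prop:Z7Z3_ch=2} supplies only the relative bound $\beta(\F[W]^{Z_7}_+,\F[W]^G)\le 9$ for an arbitrary $G$-module $W$, so one extra step is needed: since $[G:Z_7]=3$ is not divisible by $2$, Proposition~\ref{trans} (applied with $k=1$ and $H=Z_7$) yields $\beta(G,W)=\beta(\F[W]^G)\le\beta(\F[W]^{Z_7}_+,\F[W]^G)\le 9$, and taking the supremum over all $G$-modules $W$ gives $\beta(G)\le 9$ in characteristic $2$ as well. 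Combining the two bounds proves $\beta(Z_7\rtimes Z_3)=9$.

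I do not anticipate any genuine difficulty at this stage: all the substantive content — the adapted Goebel algorithm, the analysis of terminal and gapless monomials, and the explicit polarization identities in characteristics $0$, $5$ and $2$ — is already packaged in Propositions~\ref{prop:Z7Z3_ch>2} and \ref{prop:Z7Z3_ch=2}. The only point that needs care is not to overlook that the characteristic-$2$ proposition is phrased in terms of the relative transfer module $\F[W]^{Z_7}_+$ rather than $\F[W]^G$ directly, so the reduction through Proposition~\ref{trans} must be made explicit; and, invoking Knop's invariance of $\beta(G)$ under change of field of fixed characteristic, one notes that the finitely many characteristics treated above indeed exhaust all cases permitted by the hypothesis $\ch(\F)\neq 3,7$.
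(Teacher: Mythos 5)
Your proposal is correct and follows essentially the same route as the paper: the lower bound $\beta(Z_7\rtimes Z_3)\ge p+q-1=9$ from the start of Chapter~\ref{ch:semidir}, combined with Proposition~\ref{prop:Z7Z3_ch>2} for $\ch(\F)\neq 2,3,7$ and Proposition~\ref{prop:Z7Z3_ch=2} for $\ch(\F)=2$. Your explicit use of Proposition~\ref{trans} to pass from $\beta(\F[W]^{Z_7}_+,\F[W]^G)\le 9$ to $\beta(G)\le 9$ in characteristic $2$ is exactly the (implicit) step the paper relies on, as it does in the proof of Theorem~\ref{thm:zpzq}.
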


In a subsequent paper \cite{cziszter}  the first author proved that $\beta(Z_p\rtimes Z_3)=p+2$ for any prime $p$ congruent to $1$ modulo $3$. 

\section{Some further particular cases}


\subsection{The group $Z_5 \rtimes Z_4$, where $Z_4$ acts faithfully}

The following is proved (without explicitly being stated) by Schmid \cite{schmid} for ${\mathrm{char}}(\field)=0$ and by Sezer \cite{sezer} in non-modular positive characteristic:

\begin{proposition}\label{prop:betaD2n} 
Suppose that $2n\in\field^\times$. For any module $V$ of the dihedral group $D_{2n}=Z_n\rtimes_{-1}Z_2$ we have 
\[\beta(D_{2n}, V) \le \beta(\field[V]^{Z_n}_+,\field[V]^{D_{2n}})\le n+1.\]
\end{proposition}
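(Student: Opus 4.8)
The plan is to apply the general framework for Frobenius groups built up in Section~\ref{sec:goebel}, specialized to the dihedral group $D_{2n}=Z_n\rtimes_{-1}Z_2$ with normal subgroup $A=Z_n$ and $G/A\cong Z_2$. The first observation is that the first inequality, $\beta(D_{2n},V)\le\beta(\F[V]^{Z_n}_+,\F[V]^{D_{2n}})$, is immediate from Proposition~\ref{trans} applied with $H=Z_n$ (note $[G:H]=2$ is invertible since $2n\in\F^\times$). So the real content is the bound $\beta(L_+,R)\le n+1$ where $L=\F[V]$, $R=\F[V]^{D_{2n}}$, $I=\F[V]^{Z_n}$. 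Here one must be slightly careful: condition \eqref{G/A-kikotes} holds for $(D_{2n},Z_n)$ only when the action of $Z_2$ by inversion fixes no nontrivial character, i.e.\ when $n$ is odd; for $n$ even the element of order $2$ in $Z_n$ gives a character fixed by inversion, so bricks behave differently. I would either reduce to the faithful/odd case by the structure of $D_{2n}$ (for $n$ even, $D_{2n}$ has $Z_2$ as a direct factor when $n\equiv 2\pmod 4$, etc.) or, more cleanly, run the Goebel-type argument directly as below, which does not actually require \eqref{G/A-kikotes} in full strength.

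The core argument: by Lemma~\ref{goebel}, $L_+$ is generated as an $R$-module by terminal monomials (taking $\mathcal V$ to be all variables), so it suffices to show every terminal monomial has degree $\le n+1$. Suppose $m$ is terminal with $\deg(m)\ge n+2$. Since $A=Z_n$ is abelian of exponent $n$, and $\F[V]^A$ is spanned by monomials whose weight sequence is a zero-sum sequence over $\hat A\cong Z_n$, I would split into two steps. Step one: reduce to the case where $m$ itself is $A$-invariant. If $m$ is not $A$-invariant, its weight $\w(m)\ne 0$; but then I need a good factor — here one uses that $m$ has degree $\ge n+2>|\hat A|$, so by a pigeonhole/Davenport-type argument (essentially $\D(Z_n)=n$, cf.\ \eqref{eq:davenportbeta}) $\Phi(m)$ contains a proper zero-sum subsequence, giving an $A$-invariant proper divisor $u\mid m$; one then checks $u$ (or a modification) is a good factor because $\lambda(u^bv)\prec\lambda(m)$ for the nontrivial $b$, using that passing to $u^bv$ strictly decreases the shape. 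Step two: if $m$ is $A$-invariant of degree $\ge n+2$, then $\Phi(m)$ is a zero-sum sequence of length $\ge n+2>\D(Z_n)=n$, hence reducible — it factors as $\Phi(m)=T_1T_2$ with both $T_j$ nonempty zero-sum — so $m=u_1u_2$ with $u_j\in I_+$. Now I would show $m=u_1u_2$ can be "symmetrized": using the transfer/Reynolds relation $\tau^G_A(u_1)u_2 = u_1u_2 + u_1^b u_2\in R_+L_+$ together with the explicit $Z_2$-version of the identity in Lemma~\ref{lemma:trukk} (which simplifies dramatically for $Z_2$ — essentially $2(u v - u v^g) = uv\tau(w)+\dots$ collapses, but for order $2$ one gets $uv + u v^g = u\,\tau^G_A(v)\in R_+L_+$ directly), one reduces $m$ modulo $R_+L_+$ to a monomial of strictly smaller shape, contradicting terminality.

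The main obstacle I anticipate is the even-$n$ case, where \eqref{G/A-kikotes} fails and the "brick" of size $1$ coming from the order-$2$ character of $\hat A$ must be handled separately — one has to argue that such a brick (a single $A$-invariant variable of that weight, or a product $x\bar x$) can be split off and absorbed, so that the degree bound $n+1$ (rather than $n+2$ or worse) is still attained; the $+1$ rather than $+0$ in the bound is presumably exactly what accommodates this one extra column. A secondary technical point is verifying the shape-decrease inequalities $\lambda(u^bv)\prec\lambda(m)$ carefully at each reduction, since (as the text warns) $\lambda(ST)\prec\lambda(S'T)$ does not follow from $\lambda(S)\prec\lambda(S')$ in general — but since here $b$ has order $2$ and we only track the single orbit structure of $\hat A$ under inversion, these checks should be routine. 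I would also double-check the edge behavior at degree exactly $n+1$ to confirm the bound is not $n$ (it is not: the monomial whose weight sequence is $(g, g, \dots, g, -g)$-type or the invariant of degree $n+1$ built from a generator and its inverse shows $n+1$ is attained, consistent with the companion paper's claim $\beta(D_{2n})-\tfrac12|D_{2n}|\in\{1,2\}$).
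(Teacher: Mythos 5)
Your first inequality is indeed immediate from Proposition~\ref{trans} (with $H=Z_n$, $k=1$, $2\in\F^\times$), and note that the paper itself gives no proof of this proposition: it is quoted from Schmid \cite{schmid} (characteristic $0$) and Sezer \cite{sezer} (non-modular positive characteristic). The heart of your argument, however, has a genuine gap, and it is already present for $n$ odd, so it has nothing to do with condition \eqref{G/A-kikotes} or bricks. You claim that by Lemma~\ref{goebel} it suffices to show terminal monomials have degree at most $n+1$, and that the symmetrization $uv\mapsto u^bv$ (for an $A$-invariant divisor $u$) strictly decreases the shape. Both claims are false. Take $n$ odd, $V$ the sum of two copies of the $2$-dimensional module, with variables $x_1,y_1,x_2,y_2$ of $Z_n$-weights $1,-1,1,-1$ and the reflection $b$ swapping $x_i\leftrightarrow y_i$, and let $m=(x_1y_2)^N$. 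This monomial is $Z_n$-invariant, its weight sequence $(1^N,(-1)^N)$ has shape $(2^N)$, which is already minimal for $\prec$ inside the orbit $\{1,-1\}$: every $A$-invariant divisor $u=x_1^iy_2^j$ (so $i\equiv j \pmod n$) gives $\lambda(u^bv)=\lambda(m)$ when $i=j$ and $\lambda(u^bv)\succ\lambda(m)$ when $i\neq j$. Hence $m$ has no good factor in the sense of Definition~\ref{def:terminal}, i.e.\ it is terminal, while $\deg m=2N$ is unbounded. So ``terminal $\Rightarrow$ degree $\le n+1$'' fails, and your Step two breaks down exactly in the typical case where the weight multiset of an invariant monomial is symmetric under inversion. (This particular $m$ does lie in $R_+L$, e.g.\ via $x_1y_2\equiv -y_1x_2$ modulo $R_+L$ and then divisibility by $x_1y_1$, but that argument works on the level of variables, not of shapes; in the paper Lemma~\ref{goebel} is never used alone to produce a degree bound, it is always combined with substantial extra input such as gapless monomials, bricks and Cauchy--Davenport.)

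Two further points. First, you silently replaced the quantity to be bounded, $\beta(\F[V]^{Z_n}_+,\F[V]^{D_{2n}})$, by the a priori larger $\beta(\F[V]_+,\F[V]^{D_{2n}})$; the comparison $\beta(I_+,R)\le\beta(L_+,R)$ does hold (the $Z_n$-Reynolds operator is $R$-linear and maps $L_+$ onto $I_+$), but you are then proving a stronger statement than needed, and it is the $I_+$-version that is actually used later (e.g.\ $\beta(I_+,S)\le 6$ for $D_{10}$ in Proposition~\ref{prop:z5z4}). Second, the worry you spend most effort on (even $n$, failure of \eqref{G/A-kikotes}) is not where the difficulty lies. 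A correct self-contained proof has to argue as Schmid and Sezer do (see also \cite{CzD:2}): work directly with $Z_n$-invariant monomials of degree $\ge n+2$ and show by a case analysis on their weight sequences, using $2\in\F^\times$ and relations of the type $uv\equiv -u^bv \pmod{R_+I}$ combined with locating genuine $D_{2n}$-invariant divisors, that they lie in $R_+\F[V]^{Z_n}$; the shape induction of Section~\ref{sec:goebel} alone cannot deliver the bound.
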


Let $G:=Z_5 \rtimes Z_4$ where $Z_4=\langle b\rangle $ 
and conjugation by $b$ is an order $4$ automorphism of the normal subgroup $A=Z_5$. Take a $G$-module $V$ and 
set $L:=\field[V]$, $R:=\field[V]^G$, $I:=\field[V]^A$, $S:=\field[V]^H$, where 
$H\cong D_{10}$ is the subgroup of $G$ generated by $A$ and $b^2$. 

\begin{proposition} \label{prop:z5z4} 
If $\mathrm{char}(\field)\neq 2,5$ then  $\beta(I_+,R)=8$. 
\end{proposition}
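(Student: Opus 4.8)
The target is $\beta(I_+,R)=8$ for $G=Z_5\rtimes Z_4$, $I=\F[V]^{Z_5}$, $R=\F[V]^G$. The lower bound $\beta(I_+,R)\ge 8$ should come from exhibiting an explicit module and invariant: take $V$ to be a suitable sum of the two faithful $4$-dimensional irreducibles (or the regular representation), and produce a $G$-invariant monomial expression of degree $8$ lying in $I_+$ that cannot be written in $R_+^2$ — concretely one looks at the transfer $\tau^G_A$ applied to a product involving the two non-trivial $Z_4$-orbits on $\hat A=Z_5\setminus\{0\}$ (which here are $\{1,4\}$ and $\{2,3\}$ since $b$ acts by multiplication by a primitive root mod $5$). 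Actually $Z_4$ acts transitively on $Z_5\setminus\{0\}$, so there is a single $G$-orbit $O=\{1,2,3,4\}$ of size $4$, and the minimal nontrivial subgroup of $G/A$ is $Z_2$ with bricks of size $2$; one should chase an explicit degree-$8$ element (e.g. built from $x_1x_2x_3x_4$-type and brick-type pieces) to see it is genuinely needed. I expect this lower bound to be the part already covered by the parallel paper \cite{CzD:2} and hence cite-able, or else a short explicit computation.

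For the upper bound $\beta(I_+,R)\le 8$, the plan is to run the machinery of Section~\ref{sec:goebel}–\ref{sec:red} with the intermediate group $H\cong D_{10}$. By Proposition~\ref{trans}, $\beta(G,V)\le\beta(I_+,R)$, but more to the point we interpose $H$: since $[G:H]=2$ and $H\trianglelefteq G$ with $G/H\cong Z_2$, Corollary~\ref{cor:G:H+1} and Lemma~\ref{lemma:myred} give bounds of the form $\beta_k(G,V)\le\beta_{2k}(H,V)$ and $\beta_k(G,V)\le\beta_{\beta_k(G/H)}(H,V)=\beta_{\beta_k(Z_2)}(H,V)$. Combined with Proposition~\ref{prop:betaD2n} which tells us $\beta(H,V)\le n+1=6$ and indeed $\beta(\F[V]^{A}_+,\F[V]^{H})\le 6$, one reduces the problem to controlling how much the last degree-two extension $H\le G$ can inflate the bound. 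The key tool is Lemma~\ref{induk}: with $[G:H]=2$, the only admissible $r$ is $r=1$ (i.e. $1\le r<[G:H]=2$ is impossible, so instead) — here one should use Corollary~\ref{cor:G:H+1} directly to get $\beta(I_+,R)=\beta(L_+,R)\le\beta_2(\F[V]^H_+,\F[V]^H)$, and then bound $\beta_2(H,V)=\D_2$-type quantity for $D_{10}$, which by the results of \cite{CzD:2} (or by a direct Goebel-type argument mimicking Proposition~\ref{prop:gapless} with $A=Z_5$, $G/A=Z_4$) is small enough.

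More precisely, the cleanest route is: apply Proposition~\ref{prop:gapless} to $(G,A)=(Z_5\rtimes Z_4, Z_5)$ directly — $\eqref{G/A-kikotes}$ holds since $Z_4$ acts faithfully and fixed-point-freely on $Z_5\setminus\{0\}$. The single nontrivial orbit $O$ has $|O|=4$ and minimal subgroup index $s=2$, so by Proposition~\ref{binom} we get $\mathcal{G}_d\subseteq\mathcal{B}$ for $d=\binom{4-2+1}{2}+1=\binom{3}{2}+1=4$; i.e. every gapless monomial of degree $\ge 4$ is divisible by a brick, and bricks here have degree $2$ (or are $A$-invariant variables). Feeding this into Lemma~\ref{induk} with $I:=\F[V]^H$ in place of $\F[V]^A$ — exploiting the chain $A\le H\le G$ — and using $\beta(L_+,S)\le 6$ from Proposition~\ref{prop:betaD2n} together with $\beta_{?}(S)$ bounds for $S=\F[S_{\le s}]$, one assembles $\beta(I_+,R)\le ([G:H]-r)s+\max\{\dots\}$ with the numerology tuned to land on $8$. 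The main obstacle will be the bookkeeping in this last step: choosing the right auxiliary degree $s$ and the right intermediate ring so that the $\max$ in Lemma~\ref{induk} is exactly $8$ and not $9$ or more — this is where one must use the sharp value $n+1=6$ for $D_{10}$ (not the crude Noether bound) and the fact that the brick degree is only $2$, and likely also a small explicit case analysis for the few monomials of degree between $6$ and $8$ that the asymptotic argument does not immediately kill, analogous to the hand computations in Section~\ref{sec:z7z3}. The positive-characteristic hypothesis $\ch(\F)\neq 2,5$ is needed exactly so that the transfer maps $\tau^G_A$, $\tau^H_A$, $\tau^G_H$ are all surjective and Proposition~\ref{prop:knopeta} applies.
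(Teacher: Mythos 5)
Your overall strategy points in the right direction (lower bound cited from \cite{CzD:2}, upper bound via the subgroup chain $A\le H\le G$ with $H\cong D_{10}$, Proposition~\ref{prop:betaD2n}, the gapless/brick machinery and the characteristic assumption for the transfers), but there is a genuine gap: none of the general reduction tools you invoke can get below $11$. Corollary~\ref{cor:G:H+1} with $[G:H]=2$ gives $\beta(G,V)\le\beta_2(H,V)$, and for $H=D_{10}$ the quantity $\beta_2$ is about $11$ (already $\D_2(Z_5)=10$ forces this), while Corollary~\ref{cor:skatulya} (which is exactly your ``gapless plus Lemma~\ref{induk}'' route, specialized to $p=5$, $q^n=4$) also yields only $\beta(I_+,R)\le 11$. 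No choice of $r$ and $s$ in Lemma~\ref{induk} will ``tune the numerology to land on $8$''; the paper itself stops the machinery at $11$ and then proves by hand that every $A$-invariant monomial $m$ with $9\le\deg(m)\le 11$ lies in $I_+R_+$. That hand argument is the heart of the proof and is absent from your proposal: it rests on the explicit identity
\begin{align*}
2m \;=\; \tau^H_A(ef)\,hr \;+\; \tau^H_A(eh)\,fr \;-\; \tau^H_A(fh^{b^2})\,e^{b^2}r,
\end{align*}
valid when $m=efhr$ with $ef$ and $eh$ both $A$-invariant, which places $m$ in $S_2I_{\ge 7}$ with $S=\F[V]^H$, whence $m\in I_+S_+^2\subseteq I_+R_+$ using $\beta(I_+,S)\le 6$ (Proposition~\ref{prop:betaD2n}) and Proposition~\ref{prop:knopeta}; the remaining monomials are reduced by a case analysis on the support of $\Phi(m_+)$ (at most two weights, not opposite), using $e\tau^G_A(f)$-type rewritings. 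You acknowledge that ``a small explicit case analysis'' is needed, but you locate it in degrees $6$--$8$, whereas the analysis that is actually required is in degrees $9$--$11$, and you supply no mechanism (such as the displayed identity) to carry it out.

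Two smaller inaccuracies: the condition $1\le r<[G:H]=2$ in Lemma~\ref{induk} is perfectly satisfiable (take $r=1$), contrary to your parenthetical; and the asserted equality $\beta(I_+,R)=\beta(L_+,R)$ is unjustified — averaging over $A$ gives only the inequality $\beta(I_+,R)\le\beta(L_+,R)$, which is all you need, but as stated the claim is wrong. The lower-bound part of your plan (citing \cite{CzD:2}) agrees with the paper.
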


\begin{proof} The lower bound $\beta(I_+,R)\ge 8$ follows from a result in \cite{CzD:2}. 
 By Corollary~\ref{cor:skatulya} we have $\beta(I_+,R)\le 5+6=11$. Therefore it is sufficient to show that 
if $m$ is an $A$-invariant monomial with $9\le\deg(m)\le 11$, then $m\in I_+R_+$. 
Suppose there are three variables $e,f,h$ such that $m=efhr$ and both $ef$ and $eh$ are $A$-invariant. 
The relation \begin{align} \label{masodik} 2m =  
\tau^H_A(ef) hr + 
\tau^H_A(eh) fr - 
\tau^H_A(fh^{b^2}) e^{b^2} r. 
\end{align}
implies that 
$m\in S_2I_{\ge 7}$, and since $\beta(I_+,S)\le 6$ by Proposition~\ref{prop:betaD2n} 
we get $m\in I_+S_+^2\subseteq I_+R_+$ (the latter inclusion follows by Proposition~\ref{prop:knopeta}). 
If $m$ contains two $A$-invariant variables then $m\in I_1^2I_{\ge 7}\subseteq I_{\ge 7}S_+$ by 
Proposition~\ref{prop:knopeta}. As above, $I_{\ge 7}\subseteq I_+S_+$, so $m\in I_+S_+^2\subseteq I_+R_+$. 
From now on suppose that none of the above two cases hold for $m$. 
Then $m=m_0m_+$, where $m_0=1$ or $m_0$ is an $A$-invariant variable, $m_+$ involves no $A$-invariant variables, and 
$8\le \deg(m_+)\le 11$. This forces that the support of $\Phi(m_+)$ has at most two elements (not opposite to each other). 
 The action of $G/A$ preserves $I_+R_+$, therefore it is sufficient to deal with the case when 
 $\Phi(m_+)=(1^k,2^l)$ or $\Phi(m_+)=(1^k,3^l)$ where $k\ge l$. 
 If $l\ge 2$ then $m_+=ef$ where $e,f$ are $A$-invariant and $\supp(\Phi(e))=\supp(\Phi(f))=\supp(\Phi(m_+))$; 
 now each monomial of $m-e\tau^G_A(f)$ belongs to $I_+R_+$ by the cases considered already. Finally, 
 if $l\le 1$, then $m_+=ef$ where $\Phi(f)=1^5$; again all monomials of $\tau^G_A(f)$ belong to $I_+R_+$ by the prior cases. 
\end{proof}

\subsection{The alternating group $A_4$}\label{sec:a4} 

Throughout this chapter let $G:=A_4$, the alternating group of degree four. 
The double transpositions and the identity constitute a normal subgroup $A\cong Z_2\times Z_2$ in $G$, and 
$G=A\rtimes Z_3$ where $Z_3 = \{ 1, g,g^2\}$. Denote by $a,b,c$ the involutions in $\hat A$, conjugation by $g$ permutes them cyclically. 
Remark for future reference that the only irreducible zero-sum sequences over $\hat A$ are: $(0)$, $(a,a)$, $(b,b)$, $(c,c)$, $(a,b,c)$.
Hence the factorization of any zero-sum sequence  over $Z_2 \times Z_2$ into maximally many irreducible ones is of  the form
\begin{equation}\label{eq:klein factorization}
 (0)^q(a,a)^r(b,b)^s(c,c)^t(a,b,c)^e \qquad \text{ where } e =0 \text{ or } 1. 
\end{equation} 
In particular the multiplicities of $a,b$ and $c$ must have the same parity.

Let $\field$ be a field with characteristic  different from $2$ or $3$. Apart from the one-dimensional representations of $G$ factoring through the natural surjection $G\to Z_3$, 
there is a single irreducible $G$-module $V$, hence an arbitrary finite dimensional $G$-module $W$ shall decompose as 
\[ W= U \oplus V^{\oplus n} \]
where $U=W^A$ consists of one-dimensional $G$-modules. 
 $V$ is the $3$-di\-men\-sio\-nal summand in the natural $4$-dimensional permutation representation of $G$.  Let $x,y,z$ denote the corresponding basis in $V^*$ 
and following our conventions introduced in Section \ref{sec:goebel} 
let $\field[V^{\oplus n}] = \otimes_{i=1}^n \field[x_i,y_i,z_i]$,
so that $x_i,y_i,z_i$ are $A$-eigenvectors of weight $a,b,c$ which are permuted cyclically by $g$. 
We write $I:=\field[W]^A$, $R:=\field[W]^G$ and $\tau:=\tau^G_A:I\to R$.

\begin{proposition}\label{prop:etaa4} If  $n = 3$
then $R_{\ge 7} \subseteq (R_+)_{\le 4} R_+$. 
\end{proposition}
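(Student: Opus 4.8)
The statement concerns the group $G=A_4=A\rtimes Z_3$ with $A\cong Z_2\times Z_2$, acting on $W=U\oplus V^{\oplus 3}$, and asserts that $R_{\ge 7}\subseteq (R_+)_{\le 4}R_+$, i.e. every invariant of degree at least $7$ decomposes into invariants of which at least one has degree $\le 4$. The natural strategy is to combine the relative transfer map $\tau=\tau^G_A\colon I\to R$ (surjective since $\mathrm{char}(\F)\nmid 3$) with the extended Goebel machinery of Section~\ref{sec:goebel}. First I would reduce to controlling $\beta(I_+,R)$: by Proposition~\ref{trans} it suffices to show $I_{\ge 7}\subseteq (R_+)_{\le 4}I + I_+R_+$, or really to show that every $A$-invariant monomial of degree $\ge 7$ lies in the span of products $r\cdot(\text{something})$ with $r\in R$ of degree $\le 4$ modulo $I_+R_+$. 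Since bricks over $\hat A=\{0,a,b,c\}$ are exactly the $A$-invariant variables (weight $0$) and the weight-$2$ monomials with support $\{a,b\}$, $\{a,c\}$, $\{b,c\}$ (all of degree $2$) together with $(a,b,c)$-monomials of degree $3$, Proposition~\ref{prop:gapless} lets me reduce to gapless $A$-invariant monomials, and then to understanding the shape of their weight sequences, which by \eqref{eq:klein factorization} are highly constrained: multiplicities of $a$, $b$, $c$ have equal parity and the only irreducibles are $(0)$, $(aa)$, $(bb)$, $(cc)$, $(abc)$.

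The core of the argument will be a finite case analysis on the weight sequence $S=\Phi(m)$ of an $A$-invariant monomial $m$ with $7\le\deg(m)$. If $S$ contains a zero-sum subsequence of small length (degree $2$ or $3$, say a pair $(xx)$ or a triple $xyz$ from a single copy of $V$, or an $A$-invariant variable), then factor $m=uv$ with $u$ that zero-sum factor, and replace $m$ by $u\,\tau(v)$: since $\deg(u)\le 3$ and hence $\deg(\tau(v))=\deg(v)\ge 4$, this is not quite in the required form, so I would instead want $\tau$ applied to the \emph{short} factor: $m\equiv \tau^G_A(u)\,v$ modulo $I_+R_+$ whenever we want $\tau(u)\in R$ of degree $\le 4$; here $\deg(u)=\deg(m)-\deg(v)$, so the goal is to split off an $A$-invariant factor $v$ of degree $\deg(m)-4$ or more, equivalently to find an $A$-invariant $u\mid m$ with $\deg(u)\le 4$. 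So the real combinatorial claim is: \emph{every $A$-invariant monomial of degree $\ge 7$ in $\F[U\oplus V^{\oplus 3}]$ has an $A$-invariant divisor of degree between $1$ and $4$.} For the weight sequence this says: any zero-sum sequence over $Z_2\times Z_2$ of length $\ge 7$ has a nonempty zero-sum subsequence of length $\le 4$ — and in fact this is immediate from \eqref{eq:klein factorization} unless $S$ is (similar to) $(a^7)$, $(a^5b^2)$-type sequences of length $7$ whose short factors all have length $\ge 3$... but $(aa)$ has length $2$, so this is fine; the only genuinely problematic zero-sum sequences of length $7$ with no zero-sum subsequence of length $\le 4$ would need every $(aa),(bb),(cc)$ absent, impossible for length $\ge 4$ by parity. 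Hence the monomial \emph{has} a short $A$-invariant divisor; the subtlety is that splitting it off may leave a residual factor that is not $A$-invariant when we conjugate, so we need the Goebel descent of Lemma~\ref{goebel}/Proposition~\ref{prop:gapless} to push through.

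Concretely I would proceed as follows. \emph{Step 1:} By Proposition~\ref{trans} reduce to $\beta(I_+,R)\le 6$, i.e. show $I_{\ge 7}\subseteq I_+R_+ + (R_+)_{\le 4}I$; note $(R_+)_{\le 4}I\subseteq (R_+)_{\le 4}R_+ + \ldots$ handles the $R$-side. \emph{Step 2:} Apply Proposition~\ref{prop:gapless} (with $\mathcal V$ the full variable set) to reduce to gapless $A$-invariant monomials $m$ with $\deg(m)\ge 7$ that are not divisible by a brick; by the brick description and gaplessness such $m$ has an extremely restricted row decomposition. \emph{Step 3:} For each such $m$, exhibit a factorization $m=uv$ with $u$ an $A$-invariant divisor, $1\le\deg(u)\le 4$, and show $m\equiv v\,\tau^G_A(u)\bmod I_+R_+$ using \eqref{eq:tekeres}-type relations, possibly after applying Lemma~\ref{lemma:trukk} (valid since $\mathrm{char}(\F)\neq 3$) to rearrange the $g$-orbit of $u$; the parity constraint from \eqref{eq:klein factorization} guarantees the existence of a length-$2$ zero-sum pair unless the whole support collapses, which for degree $\ge 7$ it cannot. \emph{Step 4:} Handle the finitely many degenerate "irreducible-tail" cases (weight sequences like $(a^7)$, or where all short zero-sum factors live in one copy of $V$) by explicit polarization/transfer identities in the spirit of the $Z_7\rtimes Z_3$ computations, exploiting $n=3$ so that at least one of $m_1,m_2,m_3$ has degree $\ge 3$ when $d_V(m)\ge 7$.

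\textbf{Main obstacle.} The hard part is not the combinatorics of zero-sum sequences over $Z_2\times Z_2$ (which is tame by \eqref{eq:klein factorization}) but the bookkeeping in Step~3–4: when we split off a short $A$-invariant $u$ and replace $m$ by $v\,\tau(u)$, the monomials appearing in $v\,\tau(u)$ may again have degree $\ge 7$ and need not be in the desired form, so we must set up an induction on the shape $\lambda(m)$ (as in Lemma~\ref{goebel}) that is genuinely decreasing, and check that the "bad" terminal cases — gapless monomials all of whose short zero-sum factors force $\deg(u)\ge 5$, or monomials where the only available $u$ is a single variable of weight $0$ leaving an irreducible tail of length $7$ — really do reduce via the explicit three-term identities of Lemma~\ref{lemma:trukk}. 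Verifying that these few explicit relations close up the argument (no new bad monomials of degree $\ge 7$ are produced) is the delicate point; the bound $6=7-1$ on $\beta(I_+,R)$ is tight, so there is no slack.
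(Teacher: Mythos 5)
Your overall toolkit (transfer, Goebel-type rewriting via \eqref{eq:tekeres}, Lemma~\ref{lemma:trukk}, the zero-sum combinatorics \eqref{eq:klein factorization}) is the right one, but the reduction you set up in Step~1 loses exactly what the proposition asserts. The statement is not $\beta(I_+,R)\le 6$ (which would be $I_{\ge 7}\subseteq I_+R_+$), and it does not follow from showing that every $A$-invariant monomial of degree $\ge 7$ equals an element of $(R_+)_{\le 4}I$ \emph{modulo} $I_+R_+$: after applying $\tau$ the error term only yields $R_+^2$, with no control on the degrees of the two factors, and $R_{\ge 7}\subseteq R_+^2$ does not formally imply $R_{\ge 7}\subseteq (R_+)_{\le 4}R_+$ (a degree-$10$ invariant could a priori decompose only into products of two degree-$5$ invariants). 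The refinement with the factor in $(R_+)_{\le 4}$ is precisely what is needed later to run the induction giving $\beta_k(A_4)=4k+2$, so it cannot be discarded. This is why the paper proves the stronger containment $I_{\ge 7}\subseteq (R_+)_{\le 4}I+(I_+)_{\le 4}R$, uses Lemma~\ref{lemma:trukk} whose error terms lie in the degree-controlled space $I_+(R_+)_{\le \deg(vw)}$ with $\deg(vw)=4$, and invokes the uncontrolled membership $m\in I_+^4\subseteq I_+R_+$ (via $\D(A)=3$, $\D_2(A)=5$ and Proposition~\ref{prop:knopeta}) only for monomials of degree at most $8$, where the degree split automatically forces one factor into degree $\le 4$. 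As written, your plan, even if Steps 2--4 were completed, would only give $\beta(G,W)\le 6$, not $R_{\ge 7}\subseteq (R_+)_{\le 4}R_+$.

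Two further problems. First, Proposition~\ref{prop:gapless} is stated and proved only for $A\cong Z_p$ cyclic of prime order (its proof rests on Lemma~\ref{lemma:easy} and the bound $\deg(m)-p+1$), so you cannot quote it for $A\cong Z_2\times Z_2$; in this section the paper uses only Lemma~\ref{goebel} and relation \eqref{eq:tekeres}. Relatedly, your list of bricks is wrong: for $G/A\cong Z_3$ the bricks are the weight-$0$ variables and the monomials with weight sequence $(a,b,c)$; degree-$2$ monomials with support $\{a,b\}$, etc., are not even $A$-invariant. Second, the part you yourself flag as ``the delicate point'' --- that the rewriting closes up on the terminal monomials without producing new bad cases --- is where the actual content lies: in the paper's treatment of the case $\deg(m_+)\ge 7$ one reduces via \eqref{eq:tekeres} to $x_1y_1^2\mid m_1$ with $\deg(m_1)\ge 3$, and then, using the parity constraint from \eqref{eq:klein factorization} to extract two degree-$2$ $A$-invariant factors, applies Lemma~\ref{lemma:trukk} to replace $m$ by a monomial divisible by the $G$-invariant $x_1y_1z_1$, keeping all error terms inside $I_+(R_+)_{\le 4}$. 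Leaving this mechanism as a plan means the argument is incomplete even apart from the first objection.
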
 

\begin{proof}  
It is sufficient to show that $ I_{\ge 7} \subseteq (R_+)_{\le 4}I+ (I_+)_{\le 4}R$. 
Take a monomial $m \in I_{\ge 7}$  with $\deg(m_+) \ge 7$.
We claim that   $m \in I_+(R_+)_{\leq 4}$ in this case. 
Consider the factorization $m_+=m_1m_2m_3$  given by the map $\field[V^{\oplus 3}] \cong \field[V]^{\otimes 3}$;
by symmetry we may assume that $\deg(m_1) \ge 3$. 
If  the $G$-invariant $x_1y_1z_1$ divides $m$ then we are done. Using relation \eqref{eq:tekeres} we may assume that 
$\Phi(m_1)$ contains at least two different weights, say $x_1y_1^2 \mid m_1$. 
Suppose that the multiplicity of $b$ is at least $3$ in $\Phi(m)$; 
then the remainder $m/x_1y_1^2y_i$ must contain an $A$-invariant divisor $w$ with $\deg(w)=2$. Set $v:=y_1y_i$ and $u:=m/vw$ so that $u$ is divisible by $x_1y_1$. 
By Lemma \ref{lemma:trukk} we can replace $m$ with the monomial $uv^gw^{g^2}$, which is divisible by the $G$-invariant $x_1y_1z_1$. 
Finally, if the multiplicity of $b$ in $\Phi(m)$ is $2$, then the multiplicity of $a$ and $c$ must be even, too. 
Then $\deg(m)\ge 8$ and $m$ has an $A$-invariant factorization $m=uvw$ with $x_1y_1^2\mid u$, 
and $\deg(v)=\deg(w)=2$. By Lemma~\ref{lemma:trukk} $m$ can be replaced by $uv^gw^{g^2}$ or 
$uv^{g^2}w^g$ so that we get back to the case treated before. 

It remains that $\deg(m_+) \le 6$. 
If $\deg(m_0) \geq 3$ then $m_0 \in (R_+)_{\le 3}I$ and we are done. 
So for the rest $\deg(m_0) \le 2$. 
Given that $\davenport(A) = 3$ and $\davenport_2(A) = 5$ by Proposition~\ref{prop:halter-koch},
we have $m \in I_1 (I_+)_{\le 3}^3I$ or $m \in I_1^2 (I_+)_{\le 3}^2I$. 
In both cases $m \in I_+^4$ hence $m \in I_+R_+$ by Proposition~\ref{prop:knopeta}. 
Taking into account that $\deg(m) \le 8$ we conclude that $m \in (R_+)_{\le 4}I+ (I_+)_{\le 4}R$, as claimed. 
\end{proof}

\begin{theorem} \label{thm:betaka4} 
If $\mathrm{char}(\field) \neq 2,3$ then 
$\beta_k(A_4) = 4k+2$. 
\end{theorem}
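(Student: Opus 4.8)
The claim is an equality $\beta_k(A_4)=4k+2$, so the plan is to prove the two inequalities separately. For the lower bound $\beta_k(A_4)\ge 4k+2$ I would exhibit a single $G$-module $W$ and an explicit $A$-invariant (in fact $G$-invariant) element of degree $4k+2$ that is not contained in $(R_+)^{k+1}$; the natural candidate is a module of the form $V^{\oplus n}$ with $n$ large enough, and a monomial whose weight sequence is built from the irreducible zero-sum sequences listed in \eqref{eq:klein factorization}. The key point is a Davenport-type computation: using $\beta_k(A)=\D_k(Z_2\times Z_2)=2k+3$ (Proposition~\ref{prop:halter-koch} with $n=m=2$) together with the fact that $G/A=Z_3$ contributes a further factor governed by the transfer, one sees $4k+2 = 2\cdot(2k+3)-4$, which strongly suggests the extremal example comes from combining an extremal zero-sum free (or barely factorable) sequence over $\hat A$ with an orbit structure under $Z_3$; I would make this precise by producing a $G$-invariant monomial of degree $4k+2$ whose only factorizations into $G$-invariants of positive degree have at most $k$ factors.

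**Upper bound.** For $\beta_k(A_4)\le 4k+2$ I would combine Lemma~\ref{lemma:myred} (applied with $N=A$, giving $\beta_k(G,W)\le \beta_{\beta_k(G/N)}(N,W)=\beta_{3k}(A,W)$ since $\beta_k(Z_3)=3k$) with the sharper structural input of Proposition~\ref{prop:etaa4}. The bound $\beta_{3k}(A,W)\le \D_{3k}(Z_2\times Z_2)=2(3k)+3=6k+3$ is too weak, so the real work is to use the explicit reduction in Proposition~\ref{prop:etaa4} — which says $R_{\ge 7}\subseteq (R_+)_{\le 4}R_+$ in the case $n=3$ — as the base case of an induction on $k$. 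The inductive step should run: given a homogeneous invariant $f\in R$ of degree $>4k+2$, peel off an invariant factor of degree $\le 4$ using Proposition~\ref{prop:etaa4} (after reducing to $n=3$ via polarization / the fact that $\beta_k$ is attained on boundedly many copies of $V$, cf.\ the remarks preceding Proposition~\ref{prop:knop} in the excerpt), leaving a homogeneous invariant of degree $>4(k-1)+2$ which by induction lies in $(R_+)^k$; multiplying back gives $f\in(R_+)^{k+1}$. One must also dispose of the part of $I_{\ge\,\cdot}$ coming from the $A$-fixed summand $U$, which is handled exactly as in the proof of Proposition~\ref{prop:etaa4} via $\D(A)=3$, $\D_2(A)=5$ and Proposition~\ref{prop:knopeta}.

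**Main obstacle.** The hard part is the reduction to $n=3$, i.e.\ showing that it suffices to bound $\beta_k(A_4,W)$ for $W=U\oplus V^{\oplus 3}$ with $U$ consisting of one-dimensional $G$-modules. For $k=1$ this is where polarization arguments (Weyl's theorem in characteristic $0$, Knop's Proposition~\ref{prop:knop} in positive characteristic) are invoked, but for general $k$ one cannot simply cite $\beta_k(G)=\beta_k(G,V_{\mathrm{reg}})$ — the generalized Noether number need not be attained on the regular representation, and polarization statements control generators, not membership in higher powers $(R_+)^{k+1}$. My expectation is that one sidesteps this by working not with $\beta_k$ directly but with the ideal-theoretic reformulation: one proves $I_{\ge 4k+3}\subseteq (R_+)^{k+1}I + I_{\ge\,\cdot}(R_+)$-type containments for all $n$ simultaneously, by the same Goebel-type orbit bookkeeping used in Section~\ref{sec:goebel}, with Proposition~\ref{prop:etaa4} providing the degree-$7$ seed. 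The bookkeeping — tracking heights, supports and the $Z_3$-orbit $\{a,b,c\}$ through repeated applications of Lemma~\ref{lemma:trukk} — is routine but delicate, and getting the constant to come out exactly $4k+2$ rather than $4k+3$ or $4k+1$ is where one must be careful, matching it against the lower-bound example.
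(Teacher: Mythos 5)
Your upper-bound skeleton (use Proposition~\ref{prop:etaa4} as a degree-$7$ seed and induct by peeling off invariant factors of degree at most $4$) is exactly the paper's strategy, but the two places where you defer the work are precisely where the content lies, and neither is filled in. First, the reduction from general $n$ to $n=3$: you correctly observe that one cannot quote $\beta_k(G)=\beta_k(G,V_{\mathrm{reg}})$ and you doubt that polarization controls membership in powers of $R_+$, but your proposed substitute (``Goebel-type orbit bookkeeping for all $n$ simultaneously'') is only announced, not carried out. The paper does use polarization, and the point you miss is how: one first gets $\beta(A_4)\le 6$ (combining $\beta(G)\le \D_3(A)=7$ from Corollary~\ref{cor:G:H+1} with Proposition~\ref{prop:etaa4} and polarization in degree $7$), and then the containment $R_{\ge 7}\subseteq (R_+)_{\le 4}R$ for arbitrary $n$ only needs to be verified in degrees $7$ through $12$, because $R$ is generated in degrees $\le 6$. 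In that bounded range Knop's theorem gives $R_d=\F[\GL_n\cdot S_d]$ (since $\dim(V)(\ch(\F)-1)\ge 12$ when $\ch(\F)\ge 5$), and the $\GL_n$-action respects products by the Leibniz rule, so $\F[\GL_n\cdot(S_+)_{\le 4}S]\subseteq (R_+)_{\le 4}R$; thus polarization does transport the ideal-theoretic statement, contrary to your expectation that it must be sidestepped. Without either this argument or your unexecuted alternative, the upper bound is not proved.

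Second, the lower bound $\beta_k(A_4)\ge 4k+2$ is only a heuristic in your proposal: the numerology $4k+2=2(2k+3)-4$ and the promise of ``a $G$-invariant monomial whose only factorizations into $G$-invariants have at most $k$ factors'' do not constitute a construction, and the factorization criterion itself is suspect for the non-abelian group $A_4$, since $R=\F[W]^{A_4}$ is not spanned by monomials, so non-membership in $(R_+)^{k+1}$ cannot be read off from multiplicative factorizations of a single monomial the way it can for abelian groups. The paper avoids this by a concrete choice: for $V$ the natural $4$-dimensional permutation module of $S_4$, the invariant ring $\F[V]^{A_4}$ has the Hironaka decomposition $P\oplus sP$ with $P=\F[p_1,p_2,p_3,p_4]$ and $s$ the degree-$6$ alternating polynomial, and $sp_4^{k-1}$ (degree $4k+2$) is visibly not in $R_+^{k+1}$. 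You would need to supply an argument of comparable concreteness to close this half of the equality.
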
 

\begin{proof}
We pove first that $\beta(A_4) \le 6$. To this end consider the subalgebra $S:=\field[U\oplus V^{\oplus 3}]^G$ 
in  $R = \field[U \oplus V^{\oplus n}]^G$ where $n \ge 3$. 
Note that $\beta(S)\le 6$ by Proposition~\ref{prop:etaa4} and in addition $\beta(G)\le \davenport_3(A)=7$ by Corollary~\ref{cor:G:H+1} 
and Proposition~\ref{prop:halter-koch}. 
We have $R_d=\field [\GL_n\cdot S_d]$ for all $d$ if $\mathrm{char}(\field)=0$ by Weyl's Theorem on polarization (cf. \cite{weyl}) 
and in positive characteristic for $d\le \dim(V)(\mathrm{char}(\field)-1) $  by Theorem 5.1 and formula (6.3) in \cite{knop}; in our case $\dim(V)(\mathrm{char}(\field)-1)\ge 12$.
It follows that $R_7= \field [\GL_n\cdot S_7] \subseteq \GL_n\cdot S_+^2\subseteq R_+^2$, whence $\beta(A_4)\le 6$, indeed.

For the rest it suffices to prove that $R_{\ge 7}\subseteq (R_+)_{\le 4}R$ holds for $n\ge 3$, as well,
because then by induction on $k$ we get $R_{\ge 4k+ 3} \subseteq (R_+)_{\le 4}^k R_+$.
Since $R$ is generated by elements of degree at most $6$, 
it is enough to prove that $\bigoplus_{d=7}^{12}R_d\subseteq (R_+)_{\le 4}R$. 
Applying polarization as above and Proposition~\ref{prop:etaa4} we get
$\bigoplus_{d=7}^{12}R_d\subseteq \field[ \GL_n\cdot \bigoplus_{d=7}^{12}S_d] =\field [\GL_n\cdot (S_+)_{\le 4}S] \subseteq (R_+)_{\le 4}R$. 

To prove $\beta_k(A_4)\ge 4k+2$ take as $V$ the natural $4$-dimensional permutation representation of the symmetric group $S_4$. 
It is well known that $R:=\field[V]^{A_4}$ has the Hironaka decomposition $R=P\oplus sP$, where $P$ is the subalgebra generated by the elementary symmetric polynomials 
$p_1,p_2,p_3,p_4$, and $s$ is the degree $6$ alternating polynomial. It is easy to deduce from the Hironaka decomposition that $sp^{k-1}\notin R_+^{k+1}$. 
\end{proof} 

\begin{remark} 
Working over the field of complex numbers Schmid \cite{schmid} already gave a computer assisted proof of the equality  $\beta(A_4, U \oplus V^{\oplus 2})=6$. 
\end{remark}

\begin{corollary}\label{cor:betaa4tilde} 
Suppose that  $\mathrm{char} (\field) \neq 2, 3$. Then  $\beta(\tilde A_4)=12$. 
\end{corollary}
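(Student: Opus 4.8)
The plan is to deduce $\beta(\tilde A_4) = 12$ from Theorem~\ref{thm:betaka4} by exploiting the fact that $\tilde A_4$ has a central subgroup $Z\cong Z_2$ with $\tilde A_4/Z\cong A_4$. First I would record the lower bound: $\tilde A_4$ contains an element of order $6$ (the preimage of an order-$3$ element of $A_4$), so by Schmid's subgroup monotonicity $\beta(\tilde A_4)\ge\beta(Z_6)=6$; this is not yet enough, so instead I would exhibit a module and an explicit invariant of degree $12$ that is indecomposable, or — more in the spirit of the paper — invoke the companion result from \cite{CzD:2} which gives $\beta(\tilde A_4)\ge 12$ since $\tilde A_4$ has a cyclic subgroup of index $2$? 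No: $\tilde A_4$ has order $24$ and its largest cyclic subgroup has order $6$, so that route is unavailable. Thus the lower bound must come from a direct construction: take a faithful $2$-dimensional $G$-module $V$ on which $Z$ acts by $-1$, and observe that a product of the form (invariant of degree $6$ for $A_4$, pulled back)$\times$(semiinvariant detecting the central character) gives an invariant of degree $12$ lying outside $R_+^2$ — essentially the same Hironaka-decomposition argument used at the end of the proof of Theorem~\ref{thm:betaka4}, now applied to the binary group.

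For the upper bound I would apply Lemma~\ref{lemma:myred} with $N=Z$: this yields
\[
\beta(\tilde A_4)\le \beta_{\beta(\tilde A_4/Z)}(Z)=\beta_{\beta(A_4)}(Z_2)=\beta_6(Z_2).
\]
Since $\D_k(Z_n)=kn$ and $\D_k(Z_n)=\beta_k(Z_n)$ (the cyclic case recorded in Section~\ref{sec:davenport}), we have $\beta_6(Z_2)=12$. Here I use the bound $\beta(A_4)\le 6$, which is exactly the content of (the first paragraph of the proof of) Theorem~\ref{thm:betaka4}. So the upper bound $\beta(\tilde A_4)\le 12$ is essentially immediate once the reduction lemma is in place; the only thing to double-check is that $Z$ is genuinely central (hence normal) in $\tilde A_4$ and that $\tilde A_4/Z\cong A_4$, which is the standard description of the binary tetrahedral group as a central extension of $A_4$ by $Z_2$.

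The main obstacle, therefore, is not the upper bound but producing the matching lower bound $\beta(\tilde A_4)\ge 12$, since the easy ``large cyclic subgroup'' argument only delivers $6$. The cleanest approach I expect to work: let $V$ be the $2$-dimensional irreducible representation of $\tilde A_4 = \mathrm{SL}_2(\F_3)$ (the central $Z_2$ acting as $-\mathrm{Id}$), so that $\F[V]^{\tilde A_4}$ consists exactly of those polynomials that are simultaneously $A_4$-invariant in the induced action on even-degree parts and whose odd-degree parts vanish. Concretely, if $p\in\F[V]$ is an $A_4$-relative invariant of degree $6$ transforming by the sign-of-the-centre character and $q$ is the degree-$6$ invariant coming from the $A_4$-picture, then $pq$ has degree $12$ and cannot be written as a product of lower-degree invariants because any such factorization would have to respect the $Z_2$-grading and would force a degree-$6$ piece that does not exist as an invariant — mirroring precisely the argument ``$sp^{k-1}\notin R_+^{k+1}$'' at the end of Theorem~\ref{thm:betaka4}'s proof with $k=2$. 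I would then conclude $\beta(\tilde A_4)\ge 12$, and combined with the upper bound obtain $\beta(\tilde A_4)=12$.
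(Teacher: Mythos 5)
Your upper bound is correct and essentially the paper's: the paper gets $\beta(\tilde A_4)\le\beta(A_4)\beta(Z_2)=6\cdot 2=12$ from Lemma~\ref{lemma:red}, while your $\beta(\tilde A_4)\le\beta_{\beta(A_4)}(Z_2)=\beta_6(Z_2)=\D_6(Z_2)=12$ via Lemma~\ref{lemma:myred} gives the same number; both rest on $\beta(A_4)=6$ from Theorem~\ref{thm:betaka4}, and the normality of the centre is not an issue.

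The genuine gap is the lower bound, and the construction you sketch would not work. First, $\tilde A_4\cong\mathrm{SL}_2(\mathbb{F}_3)$ has abelianization $Z_3$ and its centre lies inside the commutator subgroup (the quaternion subgroup $Q_8$), so there is no linear character ``detecting the central element'' and hence no ``$A_4$-relative invariant of degree $6$ transforming by the sign-of-the-centre character''; moreover, even if $p$ were a relative invariant for some nontrivial character and $q$ an invariant, the product $pq$ would again be a relative invariant for that character, not an element of $\F[V]^{\tilde A_4}$, and if $p$ were itself invariant then $pq\in R_+^2$ is decomposable by definition. Second, your indecomposability argument ``any factorization would force a degree-$6$ piece that does not exist as an invariant'' is false for this module: the binary tetrahedral group acting on $V=\mathbb{C}^2$ does have a degree-$6$ invariant (Klein's edge form), which is in fact one of the three basic generators. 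What is actually true is that $\F[V]^{\tilde A_4}$ is minimally generated in degrees $6$, $8$ and $12$ (the degree-$12$ generator being essentially the cube of a degree-$4$ relative invariant with a character of order $3$), so that the degree-$12$ generator lies outside $R_+^2$; but this generation statement is precisely the nontrivial input, and the paper obtains it by citation (Huffman's table, or Popov--Vinberg), after first reducing the lower bound to $\F=\mathbb{C}$ via Knop's inequality $\beta(G,\mathbb{C})\le\beta(G,\F)$ --- a reduction you also omit while working over an arbitrary $\F$ with $\ch(\F)\neq 2,3$. The analogy with the Hironaka-decomposition argument for $A_4$ does not substitute for this: there the decomposition $R=P\oplus sP$ was explicitly known, whereas here the needed structural fact about $\F[V]^{\tilde A_4}$ is exactly what remains unproved in your proposal.
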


\begin{proof}
We have $\beta(A_4)= 6$  by Theorem~\ref{thm:betaka4}, and since 
$\tilde A_4$ has a two-element normal subgroup $N$ with $\tilde A_4/N\cong A_4$, the inequality $\beta(\tilde A_4)\leq 12$ follows by Lemma~\ref{lemma:red}. 
 It is sufficient to prove the reverse inequality for the field $\mathbb{C}$ 
(as $\beta(G,\mathbb{C}) \le \beta(G,\field)$ by Theorem 4.7 in \cite{knop}). 
Consider the ring of invariants of the $2$-dimensional complex representation of $\tilde A_4$ realizing it as the binary tetrahedral group. 
It is well known (see  the first row in the table of  Lemma 4.1 in \cite{huffman} or Section 0.13 in \cite{popov-vinberg}) 
that this algebra is minimally generated by three elements of degree $6,8,12$, 
whence $\beta(\tilde A_4)\ge 12$. 
\end{proof}


\subsection{The group $(Z_2\times Z_2)\rtimes Z_9$}\label{sec:z2z2z9} 

\begin{proposition}\label{prop:z2z2z9}
Let $G:=(Z_2\times Z_2)\rtimes Z_9$ be the non-abelian semidirect product, 
and suppose that $\mathrm{char}(\field) \neq 2,3$. Then we have $\beta(G)\leq 17$. 
\end{proposition}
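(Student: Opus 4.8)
The plan is to apply the same machinery developed for $Z_p\rtimes Z_{q^n}$ in Section~\ref{ch:semidir}, specialized to $A\cong Z_2\times Z_2$ and $Z_9$ acting faithfully. Write $G=A\rtimes Z_9$ with $Z_9=\langle g\rangle$; the cyclic group $Z_9$ acts on $A\setminus\{0\}$, which has three elements, but no faithful action of $Z_9$ on a $3$-element set exists — so in fact the action of $Z_9$ on $A$ factors through $Z_3$, and the kernel is the unique subgroup $Z_3\le Z_9$. Thus condition \eqref{G/A-kikotes} fails for $(G,A)$, and instead the right normal subgroup to use is $N:=A\rtimes Z_3\cong A_4$, which is normal in $G$ with $G/N\cong Z_3$. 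First I would invoke Lemma~\ref{lemma:myred}: $\beta(G)\le\beta_{\beta(G/N)}(N)=\beta_{3}(A_4)$, using $\beta(G/N)=\beta(Z_3)=3$. By Theorem~\ref{thm:betaka4}, $\beta_3(A_4)=4\cdot 3+2=14$, which would even give the stronger bound $\beta(G)\le 14$; so the bound $17$ is presumably obtained through a route that works in greater generality, but Lemma~\ref{lemma:myred} already suffices and is the cleanest path.

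However, since the statement as written asks only for $\beta(G)\le 17$, an alternative and fully self-contained route is to mimic Corollary~\ref{cor:skatulya}. Here the issue is that $Z_9$ does not act faithfully on $Z_2\times Z_2$, so Corollary~\ref{cor:skatulya} does not apply verbatim; but Lemma~\ref{induk} still does. Take a $G$-module $W$, set $L=\F[W]$, $R=L^G$, $I=L^N$ where $N=A_4$. By Corollary~\ref{cor:G:H+1} applied to $N\le G$ with $[G:N]=3$, we get $\beta_k(G,W)\le\beta_{3k}(N,W)$, hence $\beta(G,W)\le\beta_3(N,W)\le\beta_3(A_4)=14$ by Theorem~\ref{thm:betaka4}. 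Since $\beta(G,W)\le\beta(L_+,R)\le\beta_3(\F[W]^N_+,R)\le 14<17$ by Proposition~\ref{trans} and the same reasoning, the claim follows with room to spare.

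The main obstacle — if one insists on a direct Goebel-type argument rather than the subquotient reduction — is that the combinatorial lemmas of Section~\ref{ch:semidir} (Proposition~\ref{prop:gapless}, Proposition~\ref{binom}, and the Cauchy--Davenport/Dias da Silva--Hamidoune inputs) are all tuned to the Frobenius situation where $G/A$ acts freely on $\hat A\setminus\{0\}$. Adapting them to a non-Frobenius semidirect product would force one to re-examine which monomials qualify as terminal and which as bricks, and the clean bijection between $G/A$-orbits and irreducible induced modules breaks. For this reason I expect the intended proof to be exactly the short reduction: $\beta\bigl((Z_2\times Z_2)\rtimes Z_9\bigr)\le\beta_3(A_4)=14\le 17$ via Lemma~\ref{lemma:myred} (or equivalently Corollary~\ref{cor:G:H+1}) together with Theorem~\ref{thm:betaka4}, possibly stated with the weaker bound $17$ only because a coarser estimate of $\beta_3(A_4)$ or of $\beta(G/N)$ was used at an intermediate stage of the write-up.
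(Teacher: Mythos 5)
Your argument collapses at the very first structural claim: $G=(Z_2\times Z_2)\rtimes Z_9$ does \emph{not} contain $A_4$, neither as a normal subgroup nor as any subgroup. As you yourself note, the action of $Z_9=\langle g\rangle$ on $A=Z_2\times Z_2$ factors through $Z_3$, so the kernel of the action is exactly the subgroup $\langle g^3\rangle\cong Z_3$. Hence the subgroup of $G$ generated by $A$ and $\langle g^3\rangle$ is the \emph{abelian} group $Z_2\times Z_2\times Z_3$, not $A_4$; and since the unique Sylow $2$-subgroup of $G$ is $A$ while the only elements of order $3$ are $g^{\pm 3}$ (which centralize $A$), no copy of $A_4$ exists inside $G$ at all. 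The group $A_4$ arises only as the \emph{quotient} $G/\langle g^3\rangle$. Consequently both of your reductions are inapplicable: Lemma~\ref{lemma:myred} needs $A_4$ to be the normal subgroup $N$ (with the quotient's Noether number as the subscript), and Corollary~\ref{cor:G:H+1} needs $A_4$ to be a subgroup $H\le G$. Neither hypothesis holds, so the conclusion $\beta(G)\le\beta_3(A_4)=14$ is unsupported. If you run the same lemmas with the subgroups and quotients that actually exist, you do not reach $17$: with $N=\langle g^3\rangle\cong Z_3$ central and $G/N\cong A_4$, Lemma~\ref{lemma:red} (or Lemma~\ref{lemma:myred}) gives only $\beta(G)\le\beta(A_4)\beta(Z_3)=18$; with the index-$3$ abelian subgroup $Z_2\times Z_2\times Z_3\cong Z_2\times Z_6$, Corollary~\ref{cor:G:H+1} together with Proposition~\ref{prop:halter-koch} gives $\beta(G)\le \D_3(Z_2\times Z_6)=19$.

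This is precisely why the paper's proof is not a one-line reduction. It first gets $\beta(G)\le 18$ from $G/C\cong A_4$ with $C=\langle g^3\rangle$, and then must do genuine extra work to exclude indecomposable invariants in degree exactly $18$: using the Goebel-type terminal-monomial machinery of Section~\ref{sec:goebel} relative to the abelian normal subgroup $A\times C$ of order $12$, it shows that the transfer of any terminal $A\times C$-invariant monomial of degree $18$ lies in $R_+^2$, splitting into the case where the $C$-weights are not all equal (handled via $\beta(A_4)=6$ acting on $\F[W]^C$) and the case where they are (handled by producing two short zero-sum subsequences over $\widehat{A\times C}$ and a good-divisor argument). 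Your proposal contains none of this, and the shortcut it relies on is based on a subgroup that does not exist; so as written the proof does not establish $\beta(G)\le 17$.
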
 

Let $\hat{K} \cong Z_2 \times Z_2= \{ 0,a,b,c \}$ and $Z_9 = \langle g \rangle$. 
Then conjugation by $g$ permutes $a,b,c$ cyclically, say $a^g=b$, $b^g=c$, $c^g=a$. 
$G$ contains the distinguished abelian normal subgroup $A:= K \times C$ where $C:= \langle g^3\rangle\cong Z_3$.  
The conventions of Section~\ref{sec:goebel} can be applied for $(G,A)$, since every irreducible representation of $G$ is $1$-dimensional or is induced from a $1$-dimensional representation of $A$. 
For an arbitrary $G$-module $W$ we set
$J = \field[W]^{C}$, $I= \field[W]^A$, $R=\field[W]^G$;
we  use the transfer maps $\mu:=\tau^G_C:  J \to R$, $\tau:=\tau^G_A: I \to R$. 
For any sequence $S$ over $\hat{A}$ we denote by $S|_C$ 
the sequence obtained from $S$ by restricting  to $C$ each element $\theta \in S$.

\begin{proof} Since $G/C\cong A_4$ and $\beta(A_4)=6$,  by Lemma~\ref{lemma:red}  we have $\beta(G)\le 18$. 
Therefore by Lemma~\ref{goebel} it is sufficient to show that if $m \in I$ is a terminal monomial of degree $18$, then $\tau(m)\in R_+^2$.
We may restrict our attention to the case when $\Phi(m)|_C=(h^{18})$ for a generator $h$ of $\hat C$, 
as otherwise  $m\in J_+^7$, and we get that $\tau(m)=\frac 14\mu(m)\in R_+^2$ by Proposition~\ref{prop:altnoetnum}  applied for $G/C$ acting on $J$.
 We claim that in this case $\Phi(m)$  contains at least $2$ zero-sum sequences 
 of length at most $3$,
whence $m \in I_+^4$ (since $\beta(A)=7$ by Proposition~\ref{prop:halter-koch}), 
and consequently $\tau(m) \in R_+^2$ again by Proposition~\ref{prop:altnoetnum}. 

To verify this claim, factor $m=uv$ where  $\Phi(v)|_K=(0^n)$ and $\Phi(u)|_K$ does not contain $0$. 
If $n\ge 3s$ then $\Phi(v)$ contains at least $s$ zero-sum sequences of length at most $3$. 
Therefore it suffices to show that  $\Phi(u)|_K$ contains the subsequence $(a,b,c)$ whenever $\deg(u) \ge 13$, 
because then the corresponding subsequence of $\Phi(u)$ is a zero-sum sequence over $A$. 
Suppose indirectly that this is false and that $\Phi(u)|_K$ contains e.g. only $a$ and $b$. 
This means that $\Phi(u)|_K= (a^{2x}, b^{2y})$ where $2(x+y) =\deg(u)$.  
By symmetry we may suppose that $x\ge y$ and consequently $x \ge 4$. 
Now $\Phi(u)|_K$ decomposes as follows:  
\begin{align*} 
(a^4, b^2) &\cdot (a^{2x-4}, b^{2y -2})  & \text{ if } y \ge 2;\\
(a^6)   &\cdot (a^{2x-6}, b^{2y}) & \text{ if } y \le 1.
\end{align*}
Observe that the first factor has degree $6$, hence it corresponds to a zero-sum sequence over $\hat A$, and it is a good divisor in the sense of  
Definition~\ref{def:terminal}. This contradicts the assumption that $m$ was terminal. 
\end{proof}


\section{Classification of the groups with large Noether number}

\subsection{A structure theorem}

The  objective of this section is to prove the following purely group theoretical structure theorem:

\begin{theorem} \label{thm:structure}
For any finite  group $G$  one of the following ten options holds:
\begin{enumerate}
\item $G$ contains a cyclic subgroup of index at most $2$;
\item $G$ contains a subgroup isomorphic to:
\begin{enumerate}
\item $Z_2 \times Z_2 \times Z_2$;
\item $Z_p \times Z_p$, where $p$ is an odd prime;
\item $A_4$ or $\tilde{A}_4$;
\end{enumerate}
\item $G$ has a subquotient isomorphic to:
\begin{enumerate}
\item an extension of $Z_2 \times Z_2$ by $Z_2 \times Z_2$;
\item a non-abelian semidirect product $Z_p \rtimes Z_q$ with odd primes $p,q$;
\item $Z_p \rtimes Z_4$, where $p$ is an odd prime and $Z_4$ acts faithfully on $Z_p$;
\item $D_{2p} \times D_{2q}$, where $p,q$ are distinct odd primes;
\item an extension of $D_{2n}$ by $Z_2 \times Z_2$, where $n$ is odd;
\item the non-abelian semidirect product  $(Z_2 \times Z_2) \rtimes Z_9$.
\end{enumerate}
\end{enumerate}
\end{theorem}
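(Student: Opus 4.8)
The plan is to carry out a case analysis driven by the structure of a minimal counterexample $G$, i.e.\ a group of smallest order violating all ten options, and to derive a contradiction by exhibiting one of the forbidden subgroups or subquotients. Since subgroups and quotients of $G$ are smaller, by minimality each of them satisfies one of the ten options; the only delicate point is that a subquotient of a subquotient is a subquotient, and ``contains a cyclic subgroup of index $\le 2$'' is inherited by quotients (of groups in case (1)) only with care, so I would first record the elementary closure properties: options (2) and (3) are visibly inherited upward (if a subgroup or subquotient of $G$ has one of these features, so does $G$), hence in a minimal counterexample \emph{every} proper subgroup and every proper quotient must fall under option (1), that is, must have a cyclic subgroup of index at most $2$.

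\textbf{Reduction via Sylow structure.} First I would dispose of $p$-groups. If $G$ is a $p$-group not in option (1), then $G$ has no cyclic subgroup of index $\le 2$; a short argument with the number of subgroups of order $p$ (or with $\Omega_1(G)$) shows $G$ contains $Z_p\times Z_p$ when $p$ is odd, giving 2(b), and contains $Z_2\times Z_2\times Z_2$ or a non-cyclic group of order $8$ handled by 3(a) when $p=2$ (here one uses that the non-cyclic, non-$(Z_2)^3$ groups of order $8$ are $D_8$ and $Q_8$, each of which has $Z_2\times Z_2$ resp.\ is itself an extension of $Z_2\times Z_2$ by $Z_2$; pushing up, any $2$-group not in (1) has a subquotient of type 3(a) or a subgroup $(Z_2)^3$). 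So assume $G$ is not a $p$-group. Let $p$ be the largest prime divisor of $|G|$ and $P$ a Sylow $p$-subgroup. By minimality $P$ lies in option (1), so $P$ is cyclic or $P$ has a cyclic subgroup of index $2$; I would first treat the case $P$ not cyclic (then $P$ contains $Z_2\times Z_2$ with $p=2$, or $Z_p \times Z_p$ is impossible since $P$ has a cyclic index-$2$ subgroup, so really $p=2$ and one is in a $2$-heavy situation reminiscent of dihedral/generalized quaternion Sylow subgroups), and then the main case $P$ cyclic.

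\textbf{Main case: all Sylow subgroups handled, building the forbidden subquotient.} When the Sylow subgroups are cyclic or have cyclic index-$2$ subgroups, I would use transfer/Burnside-type normal $p$-complement arguments together with the action of $G$ on chief factors. The key mechanism: if $G$ is not in option (1) then $G$ is ``far from'' having a large cyclic subgroup, so there must be a prime $q$ and a chief factor $N$ on which a $q$-element acts with small fixed space; chasing this yields either a non-abelian $Z_p\rtimes Z_q$ (option 3(b)), or $Z_p\rtimes Z_4$ faithful (3(c)), or — when two different odd primes both contribute dihedral pieces — a subquotient $D_{2p}\times D_{2q}$ (3(d)), or an extension of $D_{2n}$ by $Z_2\times Z_2$ (3(e)) when a Klein four-group acts nontrivially on an odd-order dihedral section, or the exotic $(Z_2\times Z_2)\rtimes Z_9$ (3(f)) in the one remaining configuration where a cyclic group of order $9$ acts on a Klein four-group. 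The role of $A_4$ and $\tilde A_4$ (2(c)) is to absorb the case where a $3$-element acts fixed-point-freely on $Z_2\times Z_2$: any group with such a section either contains $A_4$ or is a central extension thereof, producing $\tilde A_4$.

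\textbf{Expected main obstacle.} The hard part is the bookkeeping in the last case: showing the list 3(a)--(f) together with 2(a)--(c) is genuinely exhaustive, i.e.\ that no minimal counterexample slips through. Concretely, the delicate subcase is when $G$ has a normal Sylow subgroup $N$ (abelian, either elementary abelian $2$-group of rank $\le 2$, or cyclic of odd order, or $Z_p\times Z_p$ already caught by 2(b)) and $G/N$ acts on it: I must argue that the image of $G/N$ in $\Aut(N)$ together with a kernel that is itself cyclic-ish forces one of the named types as a subquotient, which requires a careful analysis of subgroups of $\GL_2(\F_2)\cong S_3$, of $\Aut(Z_{2^k})$, and of cyclic subgroups of $\Aut(Z_n)$ for $n$ odd. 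I would organize this by the isomorphism type of $F(G)$ or of a minimal normal subgroup, and in each branch either extract a cyclic subgroup of index $\le 2$ (contradiction) or name the subquotient; the semidihedral/modular $2$-group Sylow cases and the $(Z_2)^2\rtimes Z_9$ case are where the argument is least automatic and will need the most explicit group-theoretic computation.
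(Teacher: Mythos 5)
Your proposal is an outline rather than a proof, and it has one fatal omission: it never addresses non-solvable groups. Even granting your minimal-counterexample reductions, every proper subgroup of a minimal counterexample $G$ has a cyclic subgroup of index at most $2$ and is therefore solvable; so a non-solvable $G$ is a minimal non-solvable group and has a minimal simple group as a quotient of itself by its solvable radical. Nothing in your sketch touches this situation, and there is no elementary way to finish from there: the paper devotes a separate argument to it, invoking Thompson's classification of minimal simple groups ($L_2(2^p)$, $L_2(3^p)$, $L_2(p)$, $Sz(2^p)$, $L_3(3)$), treating $A_5$ via the preimage of its $A_4$ subgroup, and exhibiting in each remaining group a subgroup or subquotient from the list. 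All of your ``main case'' machinery (normal $p$-complements, chief factors, $F(G)$) is solvable-group technology and cannot reach these groups.

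Within the solvable case your text is a plan, not an argument: you explicitly defer the exhaustiveness check (``the bookkeeping in the last case''), which is precisely the content of the theorem, and you neither cite nor replace the classical inputs that make it work. The paper uses Roquette's lemma on $p$-groups with no normal $Z_p\times Z_p$ (this drives the $2$-group case; your parenthetical about groups of order $8$ is also off, since $Z_4\times Z_2$ is omitted and $D_8$, $Q_8$ already lie in option (1)); Burnside's theorem that a group with all Sylow subgroups cyclic is $Z_n\rtimes Z_m$; Burnside's normal $p$-complement theorem for a cyclic Sylow $2$-subgroup; and, crucially, Zassenhaus's theorem that a solvable group whose Sylow $2$-subgroup has a cyclic subgroup of index $2$ possesses a normal subgroup $K$ with cyclic Sylow $2$-subgroup and $G/K\cong Z_2$, $A_4$ or $S_4$ --- it is this last result that channels the analysis into the branches producing (2c) and (3c)--(3f). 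Finally, your closure claim is too glib for option (2): if a proper \emph{quotient} of $G$ contains $Z_2\times Z_2\times Z_2$, this only gives $G$ a subquotient $Z_2^{\,3}$, which is not literally one of the ten options, and a subgroup $Z_2^{\,3}$ need not exist (extraspecial $2$-groups of minus type have $2$-rank $2$ but central quotient of rank $4$); so ``every proper quotient of a minimal counterexample falls under option (1)'' needs an extra argument. The overall minimal-counterexample framing is reasonable, but the substance of the theorem lies exactly in the steps you left open.
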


\begin{lemma}[Burnside] \label{burn_2-sylow}
If  the Sylow $2$-subgroup $P$ of a group $G$ is cyclic then $G = N \rtimes P$ where 
$N$ is the characteristic subgroup of $G$ consisting of its odd order elements.
\end{lemma}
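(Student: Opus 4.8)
The plan is to derive this from Burnside's normal $p$-complement theorem applied to $p=2$. The decisive point to establish is that the Sylow $2$-subgroup $P$ lies in the centre of its normalizer, i.e. $N_G(P)=C_G(P)$; once this is known, Burnside's transfer argument produces the complement and the rest is bookkeeping.

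First I would prove $N_G(P)=C_G(P)$. Since $P$ is cyclic it is abelian, hence $P\le C_G(P)\le N_G(P)$, and $P$ is a Sylow $2$-subgroup of each of these groups. Conjugation induces an embedding $N_G(P)/C_G(P)\hookrightarrow \Aut(P)$. On one hand $\Aut(Z_{2^n})$ has order $\varphi(2^n)=2^{n-1}$, a power of $2$, so $N_G(P)/C_G(P)$ is a $2$-group. On the other hand the image of $P$ in $N_G(P)/C_G(P)$ is trivial because $P\subseteq C_G(P)$; as $P$ is a Sylow $2$-subgroup of $N_G(P)$, its image is a Sylow $2$-subgroup of the quotient, so $N_G(P)/C_G(P)$ has odd order. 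A group that is simultaneously a $2$-group and of odd order is trivial, whence $N_G(P)=C_G(P)$.

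Next I would invoke Burnside's transfer theorem. Because $P$ is abelian and $N_G(P)=C_G(P)$, the Burnside fusion argument shows that no two distinct elements of $P$ are conjugate in $G$, so the transfer homomorphism $V\colon G\to P$ restricts on $P$ to the power map $x\mapsto x^{[G:P]}$, which is bijective on $P$ since $[G:P]$ is odd. Hence $N:=\ker V$ is a normal subgroup with $N\cap P=1$ and $|N|=[G:P]$ odd, so $G=NP=N\rtimes P$. This step is the principal obstacle, but it is entirely classical, and I would simply cite a standard reference for the normal $p$-complement theorem rather than reproduce the transfer computation.

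Finally I would identify $N$ with the set of odd-order elements of $G$, matching the statement. Every element of $N$ has odd order since $|N|$ is odd; conversely, if $g\in G$ has odd order then its image in $G/N\cong P$ has order dividing both an odd integer and a power of $2$, hence is trivial, so $g\in N$. Thus $N$ is exactly the set of odd-order elements, which is visibly invariant under every automorphism of $G$, so $N$ is the characteristic subgroup claimed, and $G=N\rtimes P$.
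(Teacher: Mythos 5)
Your proposal is correct. The paper itself offers no proof of this lemma -- it is stated as a classical result attributed to Burnside and used as a black box -- and your argument is precisely the standard derivation: the cyclicity of $P$ forces $N_G(P)=C_G(P)$ because $\Aut(Z_{2^n})$ is a $2$-group while $N_G(P)/C_G(P)$ has odd order, Burnside's normal $p$-complement (transfer) theorem then yields the normal complement $N$ of odd order, and the identification of $N$ with the set of odd-order elements (hence its being characteristic) follows as you say. Citing the normal $p$-complement theorem for the transfer step, as you propose, is exactly what the paper implicitly does.
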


\begin{proposition}[Zassenhaus, Satz 6 in \cite{zassenhaus35}]\label{zassenhaus}
Let $G$ be a finite solvable group with a Sylow $2$-subgroup $P$ containing a cyclic subgroup of index $2$.
Then $G$ has a normal subgroup $K$ with a cyclic Sylow $2$-subgroup such that  $G/K$ is isomorphic to
one of the groups $Z_2$, $A_4$ or $S_4$. 
\end{proposition}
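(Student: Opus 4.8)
The plan is to induct on $|G|$, with two engines: the Frobenius normal $2$-complement theorem, and the observation that a $2$-group having a cyclic subgroup of index $2$ whose automorphism group is \emph{not} a $2$-group must be isomorphic to $Z_2\times Z_2$ or to the quaternion group $Q_8$ of order $8$. First I would settle the case when $G$ has a normal $2$-complement, say $G=N\rtimes P$ with $|N|$ odd; here $P\neq 1$ since $|G|$ is even. Choosing a cyclic subgroup $C\le P$ of index $2$ (a maximal subgroup of $P$ when $P$ is cyclic, the given cyclic subgroup otherwise), we have $C\trianglelefteq P$, so $K:=N\rtimes C$ is normal in $G$ with cyclic Sylow $2$-subgroup $C$, and $G/K\cong Z_2$. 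In particular this disposes of the case $P$ cyclic, since then every $2$-subgroup of $G$ is cyclic with a $2$-group of automorphisms, so $G$ has a normal $2$-complement by Lemma~\ref{burn_2-sylow} (Frobenius/Burnside).

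So assume henceforth that $G$ is \emph{not} $2$-nilpotent; then $G$ is not a $2$-group and $P$ is noncyclic. If $O_{2'}(G)\neq 1$, I would pass to $\bar G:=G/O_{2'}(G)$: it is solvable, still not $2$-nilpotent, and its Sylow $2$-subgroup is isomorphic to $P$, so by induction $\bar G$ has a normal subgroup $\bar K$ with cyclic Sylow $2$-subgroup and $\bar G/\bar K\in\{Z_2,A_4,S_4\}$; the preimage $K$ of $\bar K$ is then normal in $G$ with $G/K\cong \bar G/\bar K$, and since $O_{2'}(G)$ has odd order the Sylow $2$-subgroup of $K$ is isomorphic to that of $\bar K$, hence cyclic. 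So we may assume $O_{2'}(G)=1$. By solvability $Q:=O_2(G)=F(G)\neq 1$ and $C_G(Q)=Z(Q)$, so $G/Z(Q)$ embeds into $\Aut(Q)$. Since $Q$ is normal in $P$ it inherits a cyclic subgroup of index $\le 2$, and since $G$ is not a $2$-group, $\Aut(Q)$ is not a $2$-group; running through the classification of $2$-groups with a cyclic subgroup of index $\le 2$ (cyclic groups, $Z_{2^a}\times Z_2$, dihedral, generalized quaternion, semidihedral, and modular maximal-cyclic groups) and the routine computation of their automorphism groups, the only survivors are $Q\cong Z_2\times Z_2$ with $\Aut(Q)\cong S_3$, and $Q\cong Q_8$ with $\Aut(Q)\cong S_4$.

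In the first case $C_G(Q)=Q$, so $G/Q$ is a subgroup of $S_3$ that is neither trivial (else $G=Q$ is a $2$-group) nor a $2$-group (else $|G|$ is a power of $2$), hence $G/Q\cong Z_3$ or $S_3$ and acts faithfully on $Q$. If $G/Q\cong Z_3$, Schur--Zassenhaus gives $G\cong Q\rtimes Z_3$ with the unique faithful action, i.e.\ $G\cong A_4$. If $G/Q\cong S_3$, then the preimage of $Z_3\le S_3$ is a normal subgroup $\cong A_4$ of index $2$ in $G$, so $G$ is one of the two extensions of $Z_2$ by $A_4$; as $G/Q$ is nonabelian, $G\cong S_4$ rather than $A_4\times Z_2$. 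In both cases $K:=\{1\}$ works. In the second case $Z(Q)\cong Z_2$ is characteristic in $Q$, hence normal in $G$, and $\bar G:=G/Z(Q)$ embeds into $\Aut(Q_8)\cong S_4$ with the image of $Q$ equal to the normal Klein four-subgroup $\mathrm{Inn}(Q_8)$; since $\bar G$ is not a $2$-group, the only subgroups of $S_4$ containing this normal Klein four-subgroup that qualify are $\bar G\cong A_4$ or $\bar G\cong S_4$, whence $K:=Z(Q)\cong Z_2$ has cyclic Sylow $2$-subgroup and $G/K\cong A_4$ or $S_4$.

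I expect the main obstacle to be the structural input used in the second paragraph: identifying $Z_2\times Z_2$ and $Q_8$ as the \emph{only} $2$-groups with a cyclic subgroup of index $2$ whose automorphism group is not a $2$-group. This is elementary but requires the classical list of $2$-groups with a cyclic maximal subgroup together with a case-by-case automorphism-group computation. Everything else — the Frobenius/transfer input, the fact $C_G(F(G))\le F(G)$ for solvable $G$, the behaviour of Sylow $2$-subgroups under quotients by odd-order normal subgroups, and the elementary identifications of $A_4$ and $S_4$ in the last paragraph — is routine.
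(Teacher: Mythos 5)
Your argument is correct. Note first that the paper does not prove this statement at all: it is quoted from Zassenhaus (Satz 6 of \cite{zassenhaus35}) and used as a black box, so there is no in-paper proof to compare against; what you have written is an independent, self-contained derivation. The skeleton is sound: the $2$-nilpotent case (which subsumes $P$ cyclic via Lemma~\ref{burn_2-sylow}) is settled by $K=NC$; factoring out $O_{2'}(G)$ is a legitimate strict reduction because an odd-order kernel changes neither the Sylow $2$-subgroup nor the isomorphism type of the relevant quotients; and once $O_{2'}(G)=1$, the self-centralizing Fitting subgroup $Q=O_2(G)$ is a $2$-group with a cyclic subgroup of index at most $2$ admitting an automorphism of odd order, hence $Q\cong Z_2\times Z_2$ or $Q_8$, after which the identification of $G/Q$ inside $S_3$, resp.\ of $G/Z(Q)$ inside $\Aut(Q_8)\cong S_4$ as a non-$2$-group containing the normal Klein four-subgroup, gives $K=1$ or $K=Z(Q)\cong Z_2$ as required. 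The one input you rightly flag as the crux |that only $Z_2\times Z_2$ and $Q_8$ survive| can be obtained without computing any automorphism group in full: by the Burnside basis theorem an odd-order automorphism acts nontrivially on $P/\Phi(P)$, which for every group on the list is $Z_2$ or $Z_2\times Z_2$, so it must permute the three maximal subgroups of $P$ transitively, forcing them to be pairwise isomorphic; inspecting the classical list (the same one the paper invokes via Lemma~\ref{noZpZp} and \cite{brown}) leaves only $Z_2\times Z_2$ and $Q_8$, both of which visibly do admit an order-$3$ automorphism. One small streamlining: in the subcase $G/Q\cong S_3$ you can bypass the discussion of the two extensions of $Z_2$ by $A_4$, since $G\cong A_4\times Z_2$ is already excluded by $O_2(A_4\times Z_2)\cong Z_2^{3}\neq Q$.
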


\begin{lemma}[Roquette \cite{roquette}, or \cite{berk} Lemma 1.4 or \cite{huppert} III. 7. 6 ] \label{noZpZp}
If $G$ is a finite $p$-group which does not contain $Z_p \times Z_p$ as a normal subgroup, 
then either $G$ is cyclic or $p=2$ and $G$ is isomorphic to one of the groups $D_{2^n}, SD_{2^n}, Dic_{2^n}$, where $n>3$,
or to the quaternion group $Q= Dic_{2^3}$. 
\end{lemma}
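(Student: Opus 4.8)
The plan is to recast the hypothesis in terms of abelian normal subgroups. The key \emph{Claim} is that $G$ has no normal subgroup isomorphic to $Z_p\times Z_p$ if and only if every abelian normal subgroup of $G$ is cyclic. The nontrivial implication: if $A\triangleleft G$ is abelian but not cyclic, then $\Omega_1(A)=\{x\in A:x^p=1\}$ is characteristic in $A$, hence normal in $G$, and is elementary abelian of rank at least $2$; since $G$ is a $p$-group acting on the nonzero $\F_p$-vector space $\Omega_1(A)$ it fixes a nonzero vector, and applying this once more in the quotient yields a $G$-invariant plane, i.e.\ a normal $Z_p\times Z_p$. In particular $Z(G)$ is cyclic (a noncyclic centre would contain a central, hence normal, $Z_p\times Z_p$). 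From now on assume $G$ has no normal $Z_p\times Z_p$.

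Fix a maximal abelian normal subgroup $A\triangleleft G$. The standard argument — were $C_G(A)\supsetneq A$, one could choose $xA$ of order $p$ inside $Z(G/A)\cap(C_G(A)/A)$, and then $\langle A,x\rangle$ would be a strictly larger abelian normal subgroup — shows $C_G(A)=A$; and by the Claim $A$ is itself cyclic, say $A=\langle a\rangle\cong Z_{p^n}$. Hence $G/A=G/C_G(A)$ embeds into $\Aut(Z_{p^n})$; in particular $G/A$ is abelian, so every subgroup of $G$ containing $A$ is normal in $G$.

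If $p$ is odd, $\Aut(Z_{p^n})$ has cyclic Sylow $p$-subgroup, so $G/A$ is cyclic. If $G\neq A$, pick $B$ with $A\le B\le G$ and $[B:A]=p$; then $B$ is nonabelian (since $B\not\le C_G(A)=A$), and a nonabelian $p$-group with a cyclic subgroup of index $p$ is, for $p$ odd, the modular group $M_{p^{n+1}}$, whose subgroup $\Omega_1$ is $\cong Z_p\times Z_p$ and characteristic — hence normal in $G$, a contradiction. Therefore $G=A$ is cyclic, and the exceptional cases do not arise when $p$ is odd.

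The case $p=2$ follows the same skeleton, but the list of nonabelian $2$-groups with a cyclic subgroup of index $2$ is larger: $M_{2^{n+1}}$ together with the dihedral, semidihedral and generalized quaternion groups of order $2^{n+1}$, the modular one being excluded exactly as above. One then shows $[G:A]=2$: when $G/A$ is cyclic of order $2^m\ge 4$ with $b$ a preimage of a generator acting on $a$ by $a\mapsto a^r$, the $2$-adic valuation identity $v_2(r^{2^k}-1)=v_2(r^2-1)+k-1$ (valid for odd $r$) makes $\langle a^{2^{n-1}},b^{2^{m-1}}\rangle$ a normal Klein four-subgroup; and when $G/A$ is noncyclic of order $\ge 4$ one argues by induction on $|G|$, using that $A$ is cyclic, self-centralizing and maximal abelian normal to rule out that the intermediate subgroups $B$ with $[B:A]=2$, or suitable proper normal subgroups over a Klein four-subgroup of $G/A$, are dihedral, semidihedral, modular or quaternion of order exceeding $8$. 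Once $[G:A]=2$ the group $G$ is $D_{2^{n+1}}$, $SD_{2^{n+1}}$ or $\Dic_{2^{n+1}}$ — with $n=2$ giving $\Dic_8=Q$, while $D_8$ is excluded because it has a normal Klein four-subgroup — which matches the statement. This intricate $p=2$ bookkeeping (equivalently, the classification of $2$-groups with a cyclic subgroup of index $2$, or of $2$-groups of maximal class) is the main obstacle, and for the full details we refer to the sources cited in the statement.
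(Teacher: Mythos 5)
The paper does not actually prove this lemma: it is quoted as a classical fact with references to Roquette, Berkovich and Huppert, so there is no internal argument to compare your proposal against. Judged on its own terms, your reduction and your odd-$p$ argument are correct and essentially complete: the equivalence ``no normal $Z_p\times Z_p$'' $\Leftrightarrow$ ``every abelian normal subgroup is cyclic'' via $\Omega_1(A)$ and a $G$-invariant subgroup of order $p^2$ inside it, the step $C_G(A)=A$ for a maximal abelian normal subgroup $A$, the embedding $G/A\hookrightarrow\Aut(Z_{p^n})$, and (for odd $p$) the exclusion of the modular group $M_{p^{k}}$ through its characteristic $\Omega_1\cong Z_p\times Z_p$ inside the normal subgroup $B$ all hold up, granted the standard classification of $p$-groups with a cyclic subgroup of index $p$.

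The $p=2$ part, however, is not a proof as written, and that is where the whole content of the lemma lies, since the exceptional families $D_{2^n}$, $SD_{2^n}$, $\Dic_{2^n}$ occur only there. Two concrete points. First, in the subcase $G/A$ cyclic of order $2^m\ge 4$, the subgroup $\langle a^{2^{n-1}},b^{2^{m-1}}\rangle$ is a Klein four-group only if the preimage $b$ is chosen with $b^{2^m}=1$; for an arbitrary preimage one can have $b^{2^m}=a^{2^{n-1}}$, in which case $b^{2^{m-1}}$ has order $4$ and your subgroup is cyclic, not Klein. This is repairable: writing $b^{2^m}=a^s$ and $a^b=a^r$, the consistency condition $s(r-1)\equiv 0\pmod{2^n}$ together with $v_2\bigl(\tfrac{r^{2^m}-1}{r-1}\bigr)=v_2(r+1)+m-1$ shows that the congruence $k\,\tfrac{r^{2^m}-1}{r-1}\equiv -s\pmod{2^n}$ is solvable whenever the action is faithful of order at least $4$, so a preimage of order $2^m$ exists; but this adjustment is needed and is absent from your text. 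Second, and more seriously, the subcase where $G/A$ is noncyclic is only gestured at (``one argues by induction on $|G|$ \dots'') and then explicitly deferred to the cited sources; no actual induction step is carried out, and this is precisely the bookkeeping that makes the result nontrivial. Deferring it to Roquette/Berkovich/Huppert is consistent with how the paper itself treats the lemma, but it means your proposal does not independently establish the statement: it proves the odd-$p$ half and reduces the $p=2$ half to the same literature the paper cites.
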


\begin{corollary} \label{2class}
Any finite $2$-group $G$  falls under case (1), (2a) or (3a) of Theorem~\ref{thm:structure}.
\end{corollary}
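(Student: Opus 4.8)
The plan is to reduce everything to Lemma \ref{noZpZp} (Roquette) with $p=2$. First I would dispose of the case where $G$ contains a normal copy of $Z_2\times Z_2$: if $N\triangleleft G$ with $N\cong Z_2\times Z_2$, then either $N=G$ and we are in case (2a)'s baby sibling (actually $Z_2\times Z_2$ has a cyclic subgroup of index $2$, so case (1)), or $|G|\ge 8$ and I consider the action of $G$ on $N$ by conjugation. Since $\Aut(Z_2\times Z_2)\cong S_3$, the kernel $C=C_G(N)$ has index dividing $6$, hence index $1$ or $2$ (as $G$ is a $2$-group), so $[G:C]\le 2$. If $C$ itself is non-cyclic of order $\ge 8$ I would want to iterate; but a cleaner route is to observe that $C$ contains $N$ in its center, so $C$ has a central $Z_2\times Z_2$. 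The subtle point is getting from "central $Z_2\times Z_2$" to one of the listed conclusions; here I would look at whether $C$ (hence $G$) has an elementary abelian subgroup of rank $3$ — if so we land in case (2a) $Z_2\times Z_2\times Z_2$ — and if not, a $2$-group all of whose elementary abelian subgroups have rank $\le 2$ but which contains a normal $Z_2\times Z_2$ is handled by a direct structural analysis: such a $G$ has an extension of $Z_2\times Z_2$ by $Z_2\times Z_2$ as a subquotient (case (3a)) once $|G|\ge 16$, and is one of the small groups of order $\le 8$ (all having a cyclic subgroup of index $\le 2$) otherwise.

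If instead $G$ does \emph{not} contain $Z_2\times Z_2$ as a normal subgroup, then Lemma \ref{noZpZp} applies directly: $G$ is cyclic, or $G\cong D_{2^n}$, $SD_{2^n}$, $\Dic_{2^n}$ for some $n>3$, or $G\cong Q=\Dic_{2^3}$. In every one of these cases $G$ has a cyclic subgroup of index $2$ (the rotation subgroup $Z_{2^{n-1}}$ in each of $D_{2^n}$, $SD_{2^n}$, $\Dic_{2^n}$, and $Z_4$ in $Q$), so $G$ falls under case (1). This is the easy half.

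So the real content is the case $N\triangleleft G$, $N\cong Z_2\times Z_2$, and the main obstacle is the book-keeping needed to show that when $G$ has $2$-rank exactly $2$ and is large enough, it has an extension of $Z_2\times Z_2$ by $Z_2\times Z_2$ as a subquotient. I would argue as follows: take $N$ a normal $Z_2\times Z_2$, pass to $C=C_G(N)$ (index $\le 2$), and inside $C$ look at $Z(C)$, which contains $N$. If $Z(C)$ has rank $\ge 3$ we are in case (2a) and done; otherwise $Z(C)\cap N = N$ forces structure on $C/N$. One then picks a subgroup $H\le C$ with $N\le H$ and $|H/N|=4$ (possible once $|C|\ge 16$, i.e. $|G|\ge 16$ — the cases $|G|\le 8$ being checked by hand, all giving case (1)); then $H$ is an extension $1\to N\to H\to H/N\to 1$ with $N\cong Z_2\times Z_2$ and $|H/N|=4$. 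If $H/N\cong Z_2\times Z_2$ this is literally case (3a). If $H/N\cong Z_4$, then $H$ has $N$ as a normal $Z_2\times Z_2$ with cyclic quotient $Z_4$, and either $H$ has rank $3$ (case (2a)) or $H$ is one of a short explicit list of groups of order $16$ with $2$-rank $2$, each of which either has a cyclic subgroup of index $2$ (case (1)) or visibly surjects onto an extension of $Z_2\times Z_2$ by $Z_2\times Z_2$ (case (3a)); enumerating these order-$16$ possibilities is the one genuinely computational step, but it is finite and short. Assembling these pieces: every finite $2$-group lands in (1), (2a), or (3a).
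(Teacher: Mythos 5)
Your first half (no normal Klein subgroup, hence Roquette's Lemma~\ref{noZpZp} gives a cyclic, dihedral, semidihedral or dicyclic group, all of which have a cyclic subgroup of index $2$) is correct and is exactly the paper's implicit first step. The second half, however, has a genuine gap. The problematic branch is when $C/N$ (with $C=C_G(N)$) has no Klein subgroup, e.g.\ when $C/N$ is cyclic: then the only available $H$ has $H/N\cong Z_4$, and since $N\le Z(C)$ and $H/N$ is cyclic, $H$ is abelian, so with $2$-rank $\le 2$ it is $Z_4\times Z_4$ (fine, case (3a)) or $Z_8\times Z_2$. In the latter case you dispose of it by saying $H$ has a cyclic subgroup of index $2$, ``case (1)'' --- but case (1) of Theorem~\ref{thm:structure} is a statement about $G$ itself, and unlike (2) and (3) it is not inherited upward from a subgroup, so nothing at all is concluded about $G$. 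This is not a presentational slip that a better choice of $H$ fixes: take $G=\langle a,b,c\rangle$ of order $64$ with $a^{16}=b^2=c^2=1$, $ab=ba$, $ca c^{-1}=a$, $cbc^{-1}=a^8b$. This $G$ has $2$-rank $2$ and no cyclic subgroup of index $2$; its unique Klein subgroup of $C=C_G(N)=\langle a\rangle\times\langle b\rangle\cong Z_{16}\times Z_2$ is $N=\langle a^8,b\rangle$, and every subquotient of $C$ is of the form $Z_m\times Z_{m'}$ with $m'\le 2$, so no witness for (3a) exists inside $C$ at all; the witness lives in $G/N\cong Z_8\times Z_2$ (the preimage of a Klein subgroup of $G/N$ involves $c\notin C$). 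There is also a smaller slip: ``possible once $|C|\ge 16$, i.e.\ $|G|\ge 16$'' is wrong when $[G:C]=2$ and $|G|=16$, where $|C|=8$ and no such $H$ exists; the order-$16$ rank-$2$ groups without a cyclic subgroup of index $2$ (namely $Z_4\times Z_4$, $Q_8\times Z_2$, $Z_4\circ D_8$) have to be checked separately (they happen to be of type (3a) themselves), and your ``checked by hand'' clause covers only $|G|\le 8$.

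The paper avoids this trap by never working inside $C_G(N)$: assuming (1) fails, it analyses $G/N$. If $G/N$ is cyclic it manufactures $Z_2\times Z_2\times Z_2$ directly (a lift $a$ of a generator satisfies $\langle a\rangle\cap N=1$, since otherwise $G$ would have a cyclic subgroup of index $2$, and a suitable power of $a$ is an involution centralizing $N$); if $G/N$ contains a Klein subgroup, its preimage is the (3a)-witness; and in the remaining case ($G/N$ generalized quaternion, by Lemma~\ref{noZpZp} again) it passes to the Frattini subgroup $F$ of the preimage of $\Phi(G/N)$ and obtains either $Z_2\times Z_2\times Z_2$ or the \emph{quotient} $G/\Phi(F)$, an extension of $Z_2\times Z_2$ by $Z_2\times Z_2$. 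The essential point you are missing is that the (3a)-witness may have to be a quotient (or a quotient of a subgroup not contained in $C_G(N)$), so a search restricted to subgroups of $C_G(N)$ lying over $N$ cannot close the argument; you would need to redo the hard case along the lines of the $G/N$ and Frattini-quotient analysis.
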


\begin{proof}
Suppose that (1) does not hold for $G$. 
Then by Lemma~\ref{noZpZp}, $G$ has a normal subgroup $N \cong Z_2 \times Z_2$. 
Consider the factor group $G/N$: if it is cyclic, i.e. generated by $aN$ for some $a \in G$,
then necessarily $\langle a \rangle \cap N = \{1 \}$, for otherwise $\langle a \rangle$ would be a cyclic subgroup of index $2$ in $G$. 
Now we can find a subgroup $Z_2 \times Z_2 \times Z_2$, which is case (2a):
if $a^2 \neq 1$ then this is because $a^2$ necessarily centralizes $N$,
and if $a^2 =1$ then already $a$ must centralize $N$, for otherwise $G = (Z_2 \times Z_2) \rtimes Z_2 \cong D_8$,
which has a cyclic subgroup of index $2$, a contradiction.  

It remains that $G/N$ is non-cyclic.
 If $G/N$ contains a 
 subgroup isomorphic to $Z_2\times Z_2$, then we get case (3a). 
Otherwise by Lemma~\ref{noZpZp} $G/N$ contains a cyclic subgroup of index $2$.
Given that the Frattini subgroup $F/N$ of $G/N$ is cyclic, $F$ is an extension of a  cyclic group by $Z_2\times Z_2$,
hence by the same argument as above,  $F$ (and hence $G$) falls under case (2a), 
unless $F$ is a non-cylic group with a cyclic subgroup of index $2$. Then $G/\Phi$ (where $\Phi$ is the Frattini subgroup of $F$)
is an extension of $F/\Phi\cong Z_2\times Z_2$ by $G/F\cong Z_2\times Z_2$, and we get case (3a).
\end{proof}

\begin{proposition}\label{allcyclic}
Let $G$ be a group of odd order all of whose Sylow subgroups are cyclic. 
Then either $G$ is cyclic or it falls under case (3b) of Theorem~\ref{thm:structure}. 
\end{proposition}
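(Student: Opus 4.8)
The strategy is first to pin down the structure of $G$ as a metacyclic group $Z_m\rtimes Z_n$ with $\gcd(m,n)=1$, and then, assuming $G$ is non-abelian, to carve a non-abelian $Z_p\rtimes Z_q$ subquotient out of a single prime layer of the kernel $Z_m$ together with a prime-order piece of $Z_n$.

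I would argue by induction on $|G|$. Let $p$ be the smallest prime dividing $|G|$ and $P\cong Z_{p^a}$ a Sylow $p$-subgroup. Any $p$-element of $N_G(P)$ lies in $P\le C_G(P)$ (together with $P$ it generates a $p$-group, which must equal $P$), so $N_G(P)/C_G(P)$ has order prime to $p$; since it also embeds in $\Aut(Z_{p^a})$, of order $p^{a-1}(p-1)$, its order divides $p-1$, while dividing $|G|$ forces all its prime divisors to be $\ge p$ — hence $N_G(P)=C_G(P)$. By Burnside's normal $p$-complement theorem (the mechanism of Lemma~\ref{burn_2-sylow}, now for odd $p$) we get $G=N\rtimes P$ with $N$ a normal $p'$-subgroup. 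Then $N$ is again of odd order with all Sylow subgroups cyclic, so by induction it is cyclic or has a subquotient as in (3b); in the latter case so does $G$ and we are done. So we may assume $N=Z_m$ is cyclic, $G=Z_m\rtimes Z_{p^a}$. If $P$ centralises $Z_m$ then $G\cong Z_{mp^a}$ is cyclic (as $\gcd(m,p)=1$); otherwise the conjugation action of $P$ on $Z_m$ is non-trivial.

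Writing $Z_m=\prod_r Z_{r^{a_r}}$, non-triviality gives a prime $r\mid m$ (so $r\ne p$, and $r$ odd) with $Z_{r^{a_r}}$ not centralised by $P$. Let $C\cong Z_r$ be its subgroup of order $r$; it is characteristic in $Z_m$, hence normal in $G$. The image of $P$ in $\Aut(Z_{r^{a_r}})$ is a $p$-group, so of order prime to $r$, hence maps injectively into $\Aut(Z_r)\cong Z_{r-1}$; thus $P$ acts on $C$ with non-trivial cyclic image. Put $D:=C\cdot P\le G$ and $P_0:=\ker(P\to\Aut(C))$, a proper subgroup of the cyclic group $P$. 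Since $P_0$ centralises $C$ and is normal in $P$, it is normal in $D$, and $D/P_0\cong C\rtimes(P/P_0)\cong Z_r\rtimes Z_{p^b}$ with $b\ge 1$ and the induced action \emph{faithful}. Inside $Z_r\rtimes Z_{p^b}$ the subgroup generated by $Z_r$ and an order-$p$ element of $Z_{p^b}$ is isomorphic to $Z_r\rtimes Z_p$, and it is non-abelian because a faithful cyclic action restricts non-trivially to every non-trivial subgroup. Being a subgroup of the quotient $D/P_0$ of the subgroup $D\le G$, this $Z_r\rtimes Z_p$ is a subquotient of $G$ — case (3b).

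The only subtle ingredients are arithmetic: coprimality of $m$ and $p^a$ (i.e.\ $r\ne p$) is exactly what makes the action on the $r$-layer visible already on $C$, via the injection of the image of $P$ in $\Aut(Z_{r^{a_r}})$ into $\Aut(Z_r)$, and a faithful action of a cyclic group stays non-trivial on subgroups. The one structural input beyond these is Burnside's normal $p$-complement theorem; alternatively one may simply quote the classical fact that a group with all Sylow subgroups cyclic is metacyclic and run the last two steps directly on $G=Z_m\rtimes Z_n$. I expect the main effort in a full write-up to go into the careful bookkeeping of this final extraction — choosing $r$, descending to $C$, then to $D$, then killing $P_0$ — but none of the steps is genuinely hard.
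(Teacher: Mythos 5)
Your proof is correct, but it takes a somewhat different route from the paper's. The paper simply quotes Burnside's classical theorem (all Sylow subgroups cyclic $\Rightarrow$ $G\cong Z_n\rtimes Z_m$ with $\gcd(n,m)=1$) and then, in the non-abelian case, picks non-commuting elements $a,b$ of coprime prime-power orders, passes to the quotient of $\langle b\rangle$ by the centralizer of $\langle a\rangle$ to make the action faithful, and extracts the non-abelian $Z_p\rtimes Z_q$ from the bottom prime layers --- essentially your final ``extraction'' step, stated more tersely. What you do instead is re-derive the needed structural input by induction on $|G|$: at the smallest prime $p$ you show $N_G(P)=C_G(P)$ (order of $N_G(P)/C_G(P)$ divides $p-1$ yet all prime divisors of $|G|$ are $\ge p$) and invoke Burnside's normal $p$-complement theorem to split off $G=N\rtimes P$, then apply induction to $N$. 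This buys self-containedness (you never need the full metacyclic classification, only the transfer-theoretic complement theorem) at the cost of an extra induction and more bookkeeping; the paper's citation makes the argument shorter and immediately reduces to the two-generator situation. Your detailed verification that a faithful action of the cyclic top group on $Z_{r^{a_r}}$ remains non-trivial on the order-$r$ subgroup (via injectivity of $\Aut(Z_{r^{a_r}})\to\Aut(Z_r)$ on subgroups of order prime to $r$) is exactly the point the paper leaves implicit, so your write-up is, if anything, more complete on that step.
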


\begin{proof}
By a theorem of Burnside (see p. 163 in \cite{burnside}) $G$ is isomorphic to $Z_n \rtimes Z_m$ for some coprime integers $n,m$. 
Hence either $G$ is cyclic, or this semidirect product is non-abelian. In the latter case there are elements 
$a \in Z_n$ and $b \in Z_m$ of prime-power orders $p^k$ and $q^r$, which do not commute. 
After factorizing by the centralizer of $\langle a \rangle$ in $\langle b \rangle$ we may suppose that $\langle b \rangle$ acts faithfully on $\langle a \rangle$. 
Then the order $p$ subgroup of $\langle a\rangle$ and the order $q$ subgroup of $\langle b\rangle$ generate a non-abelian semidirect product $Z_p\rtimes Z_q$.
\end{proof}

\begin{proposition}\label{prop:ZnP}
Let $G= Z_n \rtimes P$, where $n$ is odd and $P$ is a $2$-group with a cyclic subgroup of index $2$. 
Then $G$ falls under case (1), (3c), (3d), or (3e) of Theorem~\ref{thm:structure}. 
\end{proposition}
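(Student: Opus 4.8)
The plan is to analyse $G$ through the kernel $K:=C_P(Z_n)$ of the conjugation action of $P$ on $Z_n$. Since $n$ is odd, $\Aut(Z_n)=\prod_{\ell\mid n}\Aut(Z_{\ell^{a_\ell}})$ is a direct product of cyclic groups of even order; hence $P/K$ is abelian, and since $K\trianglelefteq P$ while $Z_n$ centralizes $K$, in fact $K\trianglelefteq G$. The image in $P/K$ of a cyclic subgroup of $P$ of index $\le 2$ is cyclic of index $\le 2$, so $P/K$ is an abelian group with a cyclic subgroup of index at most two; I would then run a case analysis according to the isomorphism type of $P/K$.

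Suppose first $P/K$ has an element $\bar t$ of order $2^j\ge 4$, and lift it to $t\in P$. Decomposing the $t$-action on $Z_n=\prod_\ell Z_{\ell^{a_\ell}}$, some prime $p\mid n$ must carry a $t$-action of order $2^j$; since $\Aut(Z_{p^{a_p}})\to\Aut(Z_p)$ has a $p$-group as kernel, $t$ also acts on the order-$p$ subgroup $Z_p$ with order $2^j\ge 4$. Then $\langle Z_p,t\rangle=Z_p\rtimes\langle t\rangle$, and factoring out the kernel of $\langle t\rangle$ on $Z_p$ exhibits $Z_p\rtimes Z_{2^j}$ acting faithfully, which contains $Z_p\rtimes Z_4$ with $Z_4$ faithful: case (3c). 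So we may assume $P/K$ is elementary abelian, hence $P/K\in\{1,\,Z_2,\,Z_2\times Z_2\}$. If $P/K=1$ then $G=Z_n\times P$ has the cyclic subgroup $Z_n\times C$ (a product of coprime cyclic groups) of index $\le 2$, where $C$ is the given subgroup of $P$: case (1). If $P/K\cong Z_2\times Z_2$ then $G/K=Z_n\rtimes(P/K)$ acts faithfully on $Z_n$; each $\Aut(Z_{\ell^{a_\ell}})$ has a unique involution $\iota_\ell$, so the induced map $P/K\to\prod_\ell\langle\iota_\ell\rangle$ is injective, whence two of its coordinate projections — at distinct primes $p,q$ say — are linearly independent, and a suitable basis $\{\alpha,\beta\}$ of $P/K$ has $\alpha$ inverting $Z_p$ and centralizing $Z_q$ and $\beta$ inverting $Z_q$ and centralizing $Z_p$. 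Then $\langle Z_p,\alpha\rangle\times\langle Z_q,\beta\rangle\cong D_{2p}\times D_{2q}$ sits inside $G/K$: case (3d).

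It remains to treat $P/K\cong Z_2$. Writing $Z_n=N_1\times N_2$ with $P$ inverting $N_1\ne 1$ and centralizing $N_2$, one has $N_2\le Z(G)$ and $N_1\trianglelefteq G$, so $G=N_2\times(N_1\rtimes P)$; a central cyclic direct factor is harmless for all four conclusions, so we may replace $G$ by $G_1:=N_1\rtimes P$. Here $K\trianglelefteq G_1$ and $G_1/K=N_1\rtimes(P/K)\cong D_{2\lvert N_1\rvert}$ with $\lvert N_1\rvert$ odd. If $K$ is cyclic, then $N_1\times K$ is a cyclic subgroup of index $2$ in $G_1$: case (1). If $K$ is non-cyclic, then $K$ is a non-cyclic $2$-group possessing a cyclic subgroup of index $\le 2$, namely $K\cap C$ (since $P$ is now non-cyclic and $C\not\le K$, we have $KC=P$ and $[K:K\cap C]=[P:C]=2$); hence $K$ is $2$-generated, so its Frattini subgroup $\Phi(K)$ is characteristic with $K/\Phi(K)\cong Z_2\times Z_2$. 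As $\Phi(K)\trianglelefteq G_1$, the quotient $G_1/\Phi(K)$ has a normal subgroup $\cong Z_2\times Z_2$ with quotient $G_1/K\cong D_{2\lvert N_1\rvert}$, i.e.\ it is an extension of $D_{2\lvert N_1\rvert}$ by $Z_2\times Z_2$ with $\lvert N_1\rvert$ odd: case (3e).

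The steps I expect to require the most care are: identifying, in the last case, that it is the action-kernel $K$ (rather than the given cyclic subgroup $C$) that supplies the useful normal subgroup, together with the elementary observation that a non-cyclic $2$-group with a cyclic index-$2$ subgroup is $2$-generated and hence has a characteristic Klein-four quotient; and checking in the $Z_2\times Z_2$ case that a faithful action on $Z_n$ genuinely splits off two independent dihedral groups at two distinct primes. By comparison, the order-$4$ extraction in case (3c) and the reduction $G=N_2\times G_1$ are routine once the $P$-action on $Z_n$ has been diagonalised over the Sylow subgroups of $Z_n$.
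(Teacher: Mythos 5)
Your argument is correct and follows essentially the same route as the paper: both work with the centralizer $K=C_P(Z_n)$ (called $C$ there) and split into cases according to whether $P/K$ contains an element of order $4$, is trivial, is $Z_2$, or is $Z_2\times Z_2$, reaching (3c), (1), (1) or (3e) via the Frattini quotient of $K$, and (3d) via two distinct primes carrying independent involutions, respectively. Your extra step in the $P/K\cong Z_2$ case of writing $Z_n=N_1\times N_2$ and splitting off the centralized factor $N_2$ is a welcome refinement: the paper asserts $G/C\cong D_{2n}$ outright, which literally holds only when the involution inverts all of $Z_n$, and your reduction supplies the easy justification that the paper glosses over.
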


\begin{proof}
Let $C$ be the centralizer of $Z_n$ in $P$. The factor
$P/C$ is isomorphic to a subgroup of $\Aut(Z_n)$, which is abelian, and $G/C = Z_n \rtimes (P/C)$. 
If $P/C$ contains an element of order $4$, then by a similar argument as in Proposition~\ref{allcyclic} we find 
a subquotient isomorphic to $Z_p \rtimes Z_4$, where $Z_4$ acts faithfully on $Z_p$, which is case (3c). 
Otherwise $P/C$ must be isomorphic to $Z_2$ or $Z_2 \times Z_2$. 
If $P/C = Z_2$ then either $C$ is cyclic, and $Z_n \times C$ is a cyclic subgroup of index $2$ in $G$ | this is case (1); 
or else $C$ is non-cyclic, and then $G/\Phi(C)$ (where $\Phi(C)$ is the Frattini subgroup of $C$) is an extension of the dihedral group $G/C \cong D_{2n}$ by the Klein four-group $C/\Phi(C) \cong Z_2 \times Z_2$
| this is case (3e). 

Finally, if $P/C\cong Z_2\times Z_2$,  we  get case (3d): indeed,
$Z_n=P_1\times\dots\times P_r$, where the $P_i$ are the Sylow subgroups of $Z_n$.
If the generators $a$ and $b$ of $Z_2 \times Z_2$ are acting non-trivially on precisely the same set of subgroups $P_i$, 
then since the only involutive automorphism of an odd cyclic group is inversion, $ab$ will act trivially on all $P_i$, hence $ab \in C$, a contradiction.
Therefore a $P_i$ exists such that $a$ acts non-trivially, while $b$ acts trivially on it. 
But an index $j \neq i$ also must exist such that $b$ is acting non-trivially on $P_j$;
after eventually exchanging $a$ with $ab$ we may  suppose that $a$ acts trivially on $P_j$.
Then $G$ has a subfactor $(P_i \times P_j) \rtimes (Z_2 \times Z_2) \cong D_{2p^k} \times D_{2q^l}$, which leads to case (3d).
\end{proof}

\begin{proof}[Proof of Theorem~\ref{thm:structure} for solvable groups]
We shall argue by contradiction: let $G$ be a counterexample of minimal  order. 
Since $G$ does not fall under case (2b), all its odd order Sylow subgroups are cyclic by Lemma~\ref{noZpZp}.
As $G$ does not fall under case (1) or (3b), its order is even by Proposition~\ref{allcyclic}.
Finally, as $G$ does not fall under case (2a) or (3a), its Sylow $2$-subgroup contains a cyclic subgroup of index $2$ by Corollary~\ref{2class}. 
Therefore  Proposition~\ref{zassenhaus} applies to $G$, so a normal subgroup $K$ exists such that $G/K$ is isomorphic to $Z_2$, $A_4$ or $S_4$,
and using Lemma~\ref{burn_2-sylow}, $K=N\rtimes Q$, where $Q$ is a cyclic 2-group while
$N$ is a characteristic subgroup consisting of odd order elements, 
which is also  cyclic, for otherwise it would fall under case (3b). 
The case $G/K\cong S_4$ is ruled out by the minimality of $G$ 
(since otherwise the subgroup $H$ of $G$ with $H/K\cong A_4$ would fall under case (1), a contradiction). 
The case $G/K \cong Z_2$ is also ruled out, since then $G \cong Z_n \rtimes P$
where the Sylow $2$-subgroup $P$ of $G$ has a cyclic subgroup of index $2$,
so it falls under case (1), (3c), (3d), or (3e) by Proposition~\ref{prop:ZnP}.

It remains that $G/K\cong A_4$. 
Suppose first that $N$ is trivial. Then $K=Q$ and $P/Q\cong Z_2 \times Z_2$ is normal in $G/Q\cong A_4$, 
hence $P$ is normal in $G$ and by the Schur-Zassenhaus theorem $G = P \rtimes Z_3$. 
Let $\langle a \rangle$ be the cyclic subgroup of index $2$ in $P$:
the subgroup $\langle a^4 \rangle$ has no non-trivial odd order automorphism, 
hence the factor group $P/\langle a^4 \rangle$ must have a non-trivial automorphism of order $3$. 
But unless $P$ coincides with the group $Z_2 \times Z_2$ or $Dic_8$, 
the factor $P/\langle a^4\rangle$ is isomorphic to $D_8$ or $Z_4 \times Z_2$, which do not have an automorphism of order $3$ 
(for a list of the $2$-groups with a cyclic subgroup of index $2$ see \cite{brown}).  
It follows that $G = (Z_2 \times Z_2) \rtimes Z_3 = A_4$ or $G=Dic_8 \rtimes Z_3 \cong \tilde{A}_4$, 
which is case (2c), a contradiction. 

Finally, suppose that $N$ is nontrivial. 
Since $N$ is characteristic in $K$, it is normal in $G$,
and $G/N$ is isomorphic to $A_4$ or $\tilde{A}_4$ by our previous argument. 
Then $N$ is necessarily cyclic of prime order, for otherwise a proper subgroup $M \le N$ would exist which is normal is $G$, 
and $G/M$ would contain a cyclic subgroup of index at most $2$ by the minimality assumption on $G$, 
but this is impossible since $A_4$ is a homomorphic image of $G/M$. 
Consequently it also follows that $N=Z_3$, for otherwise $|N|$ and $|G/N|$ are coprime, 
so that $G = N \rtimes (G/N)$  by the Schur-Zassenhaus theorem,
and again $G$ would fall under case (2c), a contradiction. 
Let $C$ denote the centralizer of $N$ in $G/N$: on one hand $G/C$ must be isomorphic to a subgroup of $\Aut(Z_3) = Z_2$,
but on the other hand $Z_2$ is not a homomorphic image of $A_4$ or $\tilde{A}_4$, hence $G=C$. 
This means that $N$ is central in $G$, and therefore the Sylow $2$-subgroup $P$ is normal in $G$.
Given that the Sylow $3$-subgroup of $G$ is cyclic and of order $9$ we conclude that $G = P \rtimes Z_9$
where $P$ equals $Dic_8$ or $Z_2 \times Z_2$, and this gives case (3f), a contradiction. 
\end{proof}

\begin{proof} [Proof of Theorem~\ref{thm:structure} for non-solvable groups]
Suppose to the contrary that Theorem~\ref{thm:structure} fails for a non-solvable group $G$, 
which has minimal order among the groups with this property. 
Then any proper subgroup $H$ of $G$ is solvable: indeed, otherwise (2) or (3) of  Theorem~\ref{thm:structure} holds for $H$,  hence also for $G$, a contradiction. 
It follows that $G$ has a solvable normal subgroup $N$ such that $G/N$ is a minimal simple group (i.e. all proper subgroups of $G/N$ are solvable). 
 If $G/N\cong A_5$, then denote by $H$ the inverse image in $G$ of the subgroup $A_4\subseteq A_5$ under the natural surjection $G\to G/N$. 
Then $H$ is solvable, and has $A_4$ as a factor group. Thus $H$ has no cyclic subgroup of index at most two. 
Therefore by the solvable case of Theorem~\ref{thm:structure},  
(2) or (3) holds for $H$, hence it holds also  for $G$, a contradiction. 

According to Corollary 1 in \cite{thompson},  
any minimal simple group is isomorphic to one of the following:
\begin{enumerate} 
\item[(a)] $L_2(2^p)$, $p$ any prime. 

\item[(b)] $L_2(3^p)$, $p$ any odd prime. 

\item[(c)] $L_2(p)$,  $p>3$  prime with $p^2+1\equiv 0$  (mod $5$). 

\item[(d)] $Sz(2^p)$, $p$ any odd prime. 

\item[(e)] $L_3(3)$.
\end{enumerate} 

The group $L_2(2^2)$ is isomorphic to the alternating group $A_5$. 
Finally we show that for the remaining minimal simple groups  (2a), (2b), or (3) holds, hence $G/N$ can not be isomorphic to any of them 
(note that if   (2a), (2b), or (3) holds for $G/N$, then (2a), (2b), or (3) holds for $G$ by Sylow's theorem, Lemma~\ref{noZpZp} and Corollary~\ref{2class}).  

The group $L_2(2^p)$ contains as a subgroup 
the additive group of the field of $2^p$ elements.  Hence when $p\ge3$ then (2a) holds. 
Similarly, $L_2(3^p)$ contains as a subgroup the additive group of the field of $3^p$ elements, hence (2b) holds.  
The subgroup of unipotent upper triangular matrices in $L_3(3)$ is a non-abelian group of order $27$, hence (2b) holds for it. 
The subgroup in $SL_2(p)$ consisting of the upper triangular matrices is isomorphic to the semidirect product $Z_p\rtimes Z_{p-1}$. Its image in $L_2(p)$ contains the non-abelian semidirect product $Z_p\rtimes Z_q$ for any odd prime divisor $q$  of $p-1$. When $p$ is a Fermat prime, then $L_2(p)$ contains $Z_p\rtimes Z_4$ (where $Z_4$ acts faithfully on $Z_p$), except for $p=5$, but we need to consider only primes $p$ with  $p^2+1\equiv 0$  (mod $5$). 
The Sylow $2$-subgroup of $Sz(q)$ is a so-called Suzuki $2$-group of order $q^2$, that is,  a non-abelian 2-group with more than one involution, having a cyclic group of automorphisms which permutes its involutions transitively. Its center consist of the involutions plus the identity, and it has order $q$, see for example \cite{higman}, \cite{collins}. 
It follows that  the Sylow $2$-subgroup $Q$ of $Sz(2^p)$ ($p$ an odd prime) properly 
contains an elementary abelian $2$-group of rank $p$ in its Sylow $2$-subgroup, hence (2a) holds for it.  
\end{proof}


\subsection{Proof of the classification theorem} \label{sec:proof} 

\begin{proof}[Proof of Theorem \ref{thm:main}] 
It suffices to consider the cases listed in Theorem~\ref{thm:structure}:

 \begin{enumerate}  
 \item if $G$ contains a subgroup of index at most $2$ then $\beta(G) \ge \frac{1}{2}|G|$ by Proposition 5.1 in \cite{schmid} 
 (in fact $\beta(G)-\frac 12|G|\in\{1,2\}$ by \cite{CzD:2}).
  
 \item if $G$ contains a subgroup $H$ of index $k$ such that:
 
 \begin{enumerate}
 \item $H \cong Z_2 \times Z_2 \times Z_2$ then by Proposition~\ref{prop:Z2Z2Z2} and Corollary~\ref{cor:G:H+1}
 \[ \frac{\beta(G)}{|G|} \le \frac{1}{8k} \beta_k(Z_2 \times Z_2 \times Z_2)  = \frac{1}{4} + \frac{3}{8k}. \] 
 \item $H \cong Z_p \times Z_p$ then by Proposition~\ref{prop:halter-koch} and Corollary~\ref{cor:G:H+1}
 \[ \frac{\beta(G)}{|G|} \le \frac{1}{kp^2} \beta_k(Z_p \times Z_p) = \frac{1}{p} + \frac{p-1}{kp^2}. \] 
 \item $H \cong A_4$  then by Theorem~\ref{thm:betaka4}  and Corollary~\ref{cor:G:H+1}
 \[ \frac{\beta(G)}{|G|} \le \frac{1}{12 k} \beta_k(A_4) = \frac{1}{3} + \frac{1}{6k}. \]
 \end{enumerate}
It is easily checked that in all three cases the inequality $\frac{\beta(G)}{|G|} \ge \frac{1}{2}$  holds if and only if $k=1$, 
and in case (b) it is also necessary that $p=2 \text{ or } 3$.
 Finally, let $H = \tilde{A}_4$;  by Lemma~\ref{lemma:myred} we have $\beta_k(\tilde{A}_4) \le 2 \beta_k(A_4)$
 hence  $\beta(G) \le \beta_{k}(\tilde{A}_4) \le 8k+4$ by Corollary~\ref{cor:G:H+1} and Theorem~\ref{thm:betaka4},
so we get the same upper bound on $\beta(G)/|G|$ as in the case when $H = A_4$. 
 
   \item
  For any subquotient $K$ of $G$ we have   $\beta(G)/|G| \le \beta(K)/|K|$ by Lemma~\ref{lemma:red}; 
 \begin{enumerate}
 \item if $K/N \cong Z_2\times Z_2$ for some normal subgroup $N \cong Z_2\times Z_2$  
 then by Lemma~\ref{lemma:myred} and Proposition~\ref{prop:halter-koch}:
 \[ \frac{\beta(K)}{|K|} \le \frac{1}{16}\beta_{\beta(Z_2 \times Z_2)} (Z_2 \times Z_2) =
\frac{1}{16}\beta_3 (Z_2 \times Z_2)  = \frac{7}{16}.\]
  \item if $K \cong Z_p\rtimes Z_q$ then $\beta(K)/|K| < \frac{1}{2}$ by Theorem~\ref{thm:zpzq}.  
\item if $K \cong Z_p\rtimes Z_4$, where $Z_4$ acts faithfully, then  by Corollary~\ref{cor:skatulya} 
\[\frac{\beta(K)}{|K|}\le  \frac{p+6}{4p} \le \frac{13}{28}\]
for $p \ge 7$, and $\beta(K)/|K| = 2/5$ for $p=5$ by Proposition~\ref{prop:z5z4}.
\item if $K \cong D_{2p}\times D_{2q}$  where $p,q$ are distinct odd primes (hence $p \ge 3$ and $q \ge 5$) then by Lemma~\ref{lemma:red} and Proposition~\ref{prop:betaD2n}:
\[ \frac{\beta(G)}{|G|} \le \frac{1}{4pq} \beta(D_{2p}) \beta(D_{2q}) \le \frac{ (p+1)(q+1) }{4pq}  \le \frac 2 5.\]
 \item if $K/N \cong D_{2p}$  for some normal subgroup $N \cong Z_2\times Z_2$
  then by Lemma~\ref{lemma:myred} and Proposition~\ref{prop:betaD2n}: 
 \[  \frac{\beta(G)}{|G|}   \le \frac{1}{8p} \beta_{\beta(D_{2p})} (Z_2 \times Z_2) \le \frac{2p+3}{8p} \le \frac{3}{8}.\] 
 \item if $K \cong (Z_2\times Z_2)\rtimes Z_{9}$ then $\beta(K)/|K| \le \frac{17}{36}$ by Proposition~\ref{prop:z2z2z9}. 
 \end{enumerate}
To sum up, $\beta(G)/|G| < 1/2$ whenever $G$ falls under case $(3)$ of Theorem~\ref{thm:structure}. \qedhere
\end{enumerate}
\end{proof}


\bibliographystyle{cdraifplain}

\end{document}